\theoremstyle{plain}
\newtheorem{introthm}{Theorem}
\newtheorem{introQ}[introthm]{Question}
\newtheorem{theorem}{Theorem}[section]
\newtheorem{lemma}[theorem]{Lemma}
\newtheorem{proposition}[theorem]{Proposition}
\newtheorem{corollary}[theorem]{Corollary}
\newtheorem{claim}{Claim}
\theoremstyle{definition}
\newtheorem{example}[theorem]{Example}
\theoremstyle{remark}
\newtheorem{remark}[theorem]{Remark}
\DeclareMathOperator{\id}{id}
\DeclareMathOperator{\pr}{pr}
\DeclareMathOperator{\Hom}{Hom}
\DeclareMathOperator{\End}{End}
\DeclareMathOperator{\Der}{Der}
\DeclareMathOperator{\At}{At}
\DeclareMathOperator{\image}{im}
\DeclareMathOperator{\rank}{rank}
\DeclareMathOperator{\CE}{CE}
\newcommand{\cA}{\mathcal{A}}
\newcommand{\cE}{\mathcal{E}}
\newcommand{\cF}{\mathcal{F}}
\newcommand{\cL}{\mathcal{L}}
\newcommand{\cM}{\mathcal{M}}
\newcommand{\cO}{\mathcal{O}}
\newcommand{\cQ}{\mathcal{Q}}
\newcommand{\frakg}{\mathfrak{g}}
\newcommand{\ZZ}{\mathbb{Z}}
\newcommand{\RR}{\mathbb{R}}
\newcommand{\KK}{\mathbb{K}}
\newcommand{\argument}{\mathord{\color{black!25}-}}
\newcommand{\degree}[1]{\abs{#1}}
\newcommand{\into}{\hookrightarrow}
\newcommand{\xto}[1]{\xrightarrow{#1}}
\newcommand{\abs}[1]{\left|#1\right|}
\newcommand{\liederivative}[1]{\cL_{#1}}
\newcommand{\sections}[1]{\Gamma(#1)}
\newcommand{\XX}{\mathfrak{X}}
\newcommand{\tensor}{\otimes}
\newcommand{\atiyahcocycle}[2]{\At^{#1}_{#2}}
\newcommand{\atiyahclass}[1]{\alpha_{#1}}
\newcommand{\atiyahcocycleQ}{\atiyahcocycle{\nabla}{(\cM,Q)}}
\newcommand{\atiyahclassQ}{\atiyahclass{(\cM,Q)}}
\newcommand{\smooth}[1]{C^{\infty}({#1})}
\newcommand{\tangent}[1]{T{#1}}
\newcommand{\cotangent}[1]{T^{\vee}{#1}}
\newcommand{\tangentp}[2]{T_{#1}{#2}}
\title{The Atiyah class of DG manifolds of amplitude $+1$}
\thanks{The author is supported by the KIAS Individual Grant MG090801 and MG090802 at Korea Institute for Advanced Study.}
\author{Seokbong Seol}
\address{School of Mathematics, Korea Institute for Advanced Study}
\email{azuredream89@kias.re.kr}
\begin{document}

\maketitle

\begin{abstract}
A DG manifold of amplitude $+1$ encodes the derived intersection of a section $s$ and the zero section of a vector bundle $E$.
In this paper, we compute the Atiyah class of DG manifolds of amplitude $+1$. In particular, we show that the Atiyah class vanishes if and only if the intersection of $s$ with the zero section is a clean intersection. 
As an application, we study the Atiyah class of DG manifolds that encodes the derived intersection of two smooth manifolds.
\end{abstract}

\tableofcontents

\section{Introduction}

This paper investigates the Atiyah class of differential graded (DG) manifolds of amplitude $+1$. 
A DG manifold is a generalisation of a smooth manifold in which the algebra of smooth functions is endowed with a DG structure.
Such objects arise in mathematical physics in connection with BRST quantisation and AKSZ formalism \cite{MR1432574, MR2819233}. They also arise naturally in various fields of mathematics including Lie theory and differential geometry \cite{MR3000478, MR2581370, MR2971727}.

Formally, a DG manifold is a $\ZZ$-graded manifold $\cM$ equipped with a homological vector field $Q$, that is, a degree $+1$ derivation of $\smooth{\cM}$ satisfying $[Q,Q]=0$. 
Classical geometric structures such as regular foliations, complex manifolds, and Lie algebras can all be viewed as special cases of DG manifolds.

Of particular interest in this paper are DG manifolds of amplitude $+1$, often called \emph{quasi-smooth derived manifolds} in the $C^\infty$-context \cite{MR4735657,arXiv:2307.08179}. 
These are DG manifolds $(\cM, Q)$ where $\smooth{\cM} = \sections{\Lambda E^{\vee}}$ is the exterior algebra of the dual bundle of a vector bundle $E \to M$, and $Q = \iota_s$ is the interior product with a section $s \in \sections{E}$. 
Such data are denoted by $(E[-1], \iota_{s})$ and encode the derived intersection of the section $s$ with the zero section of $E$---see \cite{MR4735657}.

The Atiyah class is a central invariant in the study of DG manifolds.
It was originally introduced by Atiyah~\cite{MR86359} in the context of holomorphic vector bundles as the obstruction to the existence of holomorphic connections. 
Kapranov~\cite{MR1671737} showed that the Atiyah class of a Kähler manifold $X$ gives rise to an $L_{\infty}[1]$ algebra structure on the tangent complex $\Omega^{0,1}_{X}(\mathcal{T}_{X})$, which in turn leads to the reformulation of Rozansky--Witten invariant~\cite{MR1481135, MR1671725}. 

The Atiyah class of DG manifolds was first introduced by Shoikhet~\cite{arXiv:math/9812009} in terms of Lie algebra cohomology and 1-jets of tangent bundles, also appeared in the work of Lyakhovich, Mosman, and Sharapov~\cite{MR2608525} (denoted as $B_1$), and was studied systematically by Mehta, Stiénon, and Xu~\cite{MR3319134}. 
Extending the construction of Kapranov, the Atiyah class of a DG manifold $(\cM, Q)$ induces an $L_{\infty}[1]$ algebra structure on the space $\XX(\cM)$ of vector fields~\cite{MR4393962}. 
Moreover, the Atiyah class plays a key role in Duflo--Kontsevich-type theorem for DG manifolds~\cite{MR3754617}, which recovers the classical Duflo theorem~\cite{MR0444841} and a result of Kontsevich on the Hochschild cohomology of complex manifolds~\cite{MR2062626}. Below, we recall its definition in terms of affine connections~\cite{MR3319134}.

Given a DG manifold $(\cM,Q)$, and an affine connection $\nabla$ on $\cM$, one defines the Atiyah $1$-cocycle 
\[\At^{\nabla}(X,Y) = [Q, \nabla_{X}Y] - \nabla_{[Q,X]}Y - (-1)^{\degree{X}} \nabla_{X}[Q,Y]\]
a degree $+1$ element in the complex of $(1,2)$-tensors on $\cM$ with differential $\liederivative{Q}$. The cohomology class $[\At^{\nabla}]$ is independent of $\nabla$ and defines the Atiyah class $\atiyahclass{(\cM,Q)}$---it is the obstruction to the existence of affine connections on $\cM$ compatible with $Q$. 

This class recovers classical invariants in special cases. For example (see~\cite{MR86359, MR0281224, MR3877426, MR4665716, MR3319134}):
\begin{itemize}
\item For a complex manifold $X$, the Atiyah class of $(\cM,Q)=(T^{0,1}_{X}[1], \overline{\partial})$ is identified with the classical Atiyah class of the holomorphic tangent bundle.
\item For a regular foliation $F$, the Atiyah class of $(\cM,Q)=(T_{F}[1], d_{F})$ is identified with the Atiyah–Molino class.
\item For a Lie algebra $\frakg$, the Atiyah class of $(\cM,Q)=(\frakg[1], d_{\CE})$ corresponds to the Lie bracket.
\end{itemize}

Despite the variety of known examples, most of the existing literature focuses on the Atiyah class for DG manifolds of \textit{negative amplitude}.
In contrast, the case of \textit{positive amplitude}, even for amplitude $+1$, has received comparatively little attention. 
This motivates our central question: 

\begin{introQ}\label{question}
What is the geometric meaning of the Atiyah class of DG manifolds of amplitude $+1$?
\end{introQ}

We answer this question by identifying the vanishing of the Atiyah class with a classical geometric condition on the corresponding intersection:

\begin{introthm}[Theorem~\ref{thm:Main}]\label{thm:A}
Let $E$ be a vector bundle and $s$ a section of $E$. Then the Atiyah class of the DG manifold $(E[-1], \iota_{s})$ vanishes if and only if the intersection of $s$ with the zero section is clean.
\end{introthm}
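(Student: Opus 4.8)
The plan is to reduce the statement to a local computation and then carry it out in coordinates. Choose local coordinates $x^1,\dots,x^n$ on $M$ and a local frame $e_1,\dots,e_r$ of $E$, so that $\smooth{\cM}=\sections{\Lambda E^{\vee}}=\smooth{M}[e^1,\dots,e^r]$ with fibre coordinates $e^a$ (of degree $-1$) and $Q=\iota_s=\sum_a s^a\,\partial/\partial e^a$, where $s=\sum_a s^a e_a$. The first step is to compute the Atiyah cocycle of the flat affine connection $\nabla$ for which the coordinate vector fields are parallel. A direct calculation shows that $\At^{\nabla}$ vanishes on every pair of generators involving a $\partial/\partial e^a$, while
\[
\At^{\nabla}\bigl(\tfrac{\partial}{\partial x^i},\tfrac{\partial}{\partial x^j}\bigr)=\sum_a\frac{\partial^2 s^a}{\partial x^i\,\partial x^j}\,\frac{\partial}{\partial e^a},
\]
so that, under the identification of vertical vector fields with sections of $\pr^{*}E$, the cocycle $\At^{\nabla}$ is exactly the Hessian of $s$; invariantly, for a connection assembled from a torsion-free connection on $TM$ and one on $E$, it is the second covariant derivative $\nabla^{2}s\in\sections{S^2 T^{\vee}M\otimes E}$, regarded as a degree $+1$ element of the complex of $(1,2)$-tensors. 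I would also record two elementary facts: that $\At^{\nabla+T}=\At^{\nabla}+\liederivative{Q}T$ for every $(1,2)$-tensor $T$, so that $\At^{\nabla}$ is affine in $\nabla$; and that $\liederivative{Q}$ commutes with multiplication by functions pulled back from $M$, because $\iota_s$ annihilates $\smooth{M}\subseteq\smooth{\cM}$.

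For the implication ``clean $\Rightarrow$ vanishing'' I would argue by a partition of unity. If the intersection is clean then, near each point $p\in Z=s^{-1}(0)$, Hadamard's lemma together with the constant-rank theorem applied to $d^{v}s|_{Z}$ produces coordinates and a frame in which $s=\sum_{a\le c}x^a e_a$, with $c$ the codimension of $Z$ (the higher-order terms being absorbed into the frame); away from $Z$ one takes instead a frame whose first vector is $s$ itself, so that $s=e_1$. In each local model the component functions $s^a$ are affine-linear, their Hessian vanishes, and the flat connection is $Q$-compatible on that chart. Writing these local compatible connections as $\nabla_i=\nabla_0+T_i$ relative to a fixed global connection $\nabla_0$ and choosing a subordinate partition of unity $\{\rho_i\}$, the connection $\nabla:=\nabla_0+\sum_i\rho_i T_i$ satisfies $\At^{\nabla}=\At^{\nabla_0}+\sum_i\rho_i\,\liederivative{Q}T_i=\bigl(1-\sum_i\rho_i\bigr)\At^{\nabla_0}=0$, where the second equality uses that $\liederivative{Q}$ is linear over $\smooth{M}$. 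Hence $\atiyahclassQ=0$.

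For the converse I would use the contrapositive and localise again: since the restriction of a trivial class is trivial, it suffices to show that if $Z\cap U$ fails to be a clean intersection on a small chart $U$, then the Hessian cocycle $\At^{\nabla}|_{U}$ is not $\liederivative{Q}$-exact. The core is a description of the image of $\liederivative{Q}$ on degree $0$ $(1,2)$-tensors. Splitting tensors into ``horizontal'' and ``vertical'' parts and tracking the cross-terms that $\liederivative{Q}$ creates between them---the decisive point being that $\liederivative{Q}$ of a horizontal lift is the contraction with $\nabla^{E}s$, which at a point $p\in Z$ vanishes precisely along $\ker d^{v}s_p$---one finds that $\At^{\nabla}|_{U}$ is $\liederivative{Q}$-exact if and only if $\nabla^{2}s$ lies in the submodule of $\sections{S^2 T^{\vee}M\otimes E}$ generated, in a suitable jet-theoretic sense, by $s$, by $\nabla^{E}s$, and by their $\smooth{M}$- and $\End(E)$-multiples. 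One then checks, by Taylor expansion along $Z$ and an argument comparing vanishing orders, that this divisibility condition is equivalent to $s$ being locally equivalent to its own linearization along $Z$, i.e.\ to cleanness: whenever cleanness fails at $p$---because $Z$ is not a submanifold there, because $T_pZ\subsetneq\ker d^{v}s_p$, or through a higher-order degeneracy such as $s\sim x^3$---one produces a vector in $\ker d^{v}s_p$ along which any putative primitive would be forced to divide a nonzero constant by a function vanishing at $p$, a contradiction.

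The principal obstacle is this last direction: pinning down the image of $\liederivative{Q}$ in the relevant degree requires careful bookkeeping of the horizontal and vertical components of $(1,2)$-tensors and of the terms $\liederivative{Q}$ creates between them, and the ensuing commutative-algebra step must handle not only the ``wrong tangent space'' failures but also the higher-order ones, where the obstruction only surfaces after differentiation. The remaining ingredients---the Hessian computation, the normal form in the clean case, and the partition-of-unity gluing---are essentially routine.
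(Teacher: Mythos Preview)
Your forward direction and your description of the image of $\liederivative{Q}$ are essentially the paper's: the normal form $s=\sum_{a\le r}x^{a}e_{a}$ via constant rank plus Hadamard, the identification of the Atiyah cocycle with the Hessian of $s$, and the partition-of-unity gluing using the $\smooth{M}$-linearity of $\liederivative{Q}$ all agree with the paper, and your statement that exactness forces the second partials into the ideal generated by the $s^{a}$ and the relevant first partials is the content of the paper's Claim~1.

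The gap is in the passage from this ideal membership to cleanness. Your proposed route---Taylor expansion along $Z$ and comparing vanishing orders---does not work in the smooth category. It is partly circular, since one cannot expand along $Z$ before knowing $Z$ is a submanifold, and more seriously a jet-level argument cannot detect flat failures. Take $M=\RR$, $E$ trivial of rank one, and $s(x)=e^{-1/x^{2}}$: here $Z=\{0\}$ is a manifold but $\tangentp{0}{Z}=0\subsetneq\ker Ds_{0}=\RR$, so the intersection is not clean; yet every jet of $s$ at $0$ vanishes and there is no ``nonzero constant'' for your divisibility contradiction to catch. (The same phenomenon produces examples where $Z$ is not even a manifold, e.g.\ $s(x)=e^{-1/x^{2}}\sin(1/x)$.) The paper's device is an ODE argument: after normalising so that $s^{1}=x^{1},\dots,s^{r}=x^{r}$ with $r=\rank Ds_{p}$, restrict the ideal condition to the slice $x^{1}=\cdots=x^{r}=0$; along each ray in that slice it becomes a linear second-order ODE system for the remaining $s^{k}$, with vanishing initial data, and Picard--Lindel\"of forces $s\equiv 0$ on the slice. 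Hadamard's lemma then gives the linear normal form. This uniqueness step is exactly what handles flat functions and is the one idea your outline is missing.
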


Recall that a section $s$ intersects the zero section $\sigma_{0}$ cleanly if the intersection $Z = \image(s)\cap \image(\sigma_{0})$ is a smooth submanifold and at each point $p \in Z$, we have
\[\tangentp{p}Z = \tangentp{p}{\image(s)} \cap \tangentp{p}{\image(\sigma_{0})}.\]

The proof of Theorem~\ref{thm:A} relies on a key structural feature of the Atiyah class in a slightly more general setting: locality for positive amplitude. That is, for DG manifolds of positive amplitude, the Atiyah class can be computed locally on the base manifold. More precisely, we show that the vanishing of the Atiyah class is equivalent to its vanishing on a sufficiently fine open cover. This behaviour stands in contrast to the negative amplitude case, where the Atiyah class may carry global obstructions not visible locally.

The underlying reason for this difference is that, if $(\cM,Q)$ is of positive amplitude, the homological vector field $Q$ on the $\smooth{M}$-algebra $\smooth{\cM}$ is $\smooth{M}$-linear. As a consequence, its Lie derivative acts trivially on the base functions, which allows one to reduce cohomological computations to local ones via partition of unity. 
This property fails in general for DG manifolds of negative amplitude. 
For instance, for the DG manifold $(T^{0,1}_{X}[1], \overline{\partial})$ arising from a complex manifold $X$, the Dolbeault operator $\overline{\partial}$ is not $\smooth{X}$-linear and the Atiyah class detects genuinely global geometric data.

We use this locality to compute the Atiyah class explicitly in local coordinates. On a small enough open set where $E$ is trivialised and $s$ is represented by smooth functions via the trivialisation, we explicitly compute the Atiyah class and show that it vanishes precisely when the section $s$ intersects the zero section cleanly. The result then follows by covering the base manifold with such neighbourhoods.

As an application, we consider the DG manifold $(\cM_{XY},Q)$ of amplitude $+1$ that models the derived intersection of two embedded submanifolds $X$ and $Y$ of a smooth manifold $W$, as in \cite{MR4735657}. This DG manifold consists of the following data, upon a choice of an affine connection on $W$:
\begin{itemize}
\item a base manifold consisting of short geodesics $\gamma$ in $W$ such that $\gamma(0)\in X$ and $\gamma(1)\in Y$,
\item a vector bundle whose fibre at $\gamma$ is the space of covariantly constant vector fields along $\gamma$,
\item a section that maps $\gamma$ to its derivative $\dot{\gamma}$,
\end{itemize}
and this construction is independent of the choice of connection, up to isomorphism of DG manifolds.
Our result then implies that the Atiyah class of this DG manifold encodes the clean intersection of $X$ and $Y$:
\begin{introthm}[Theorem~\ref{thm:DerivedIntersection}]
The Atiyah class of DG manifold $(\cM_{XY},Q)$ vanishes if and only if $X$ and $Y$ intersect cleanly.
\end{introthm}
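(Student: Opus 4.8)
The plan is to recognise $(\cM_{XY},Q)$ as a DG manifold of the form $(E[-1],\iota_{s})$ and then invoke Theorem~\ref{thm:Main}, so that everything reduces to matching two ``clean intersection'' conditions. By the construction recalled in \cite{MR4735657}, $\smooth{\cM_{XY}} = \sections{\Lambda E^{\vee}}$, where $E$ is the vector bundle over the space of short geodesics whose fibre over $\gamma$ is the space of covariantly constant vector fields along $\gamma$, and $Q = \iota_{s}$ for the section $s\colon \gamma \mapsto \dot\gamma$; note $\dot\gamma$ is covariantly constant along $\gamma$ precisely because $\gamma$ is a geodesic, so this is a bona fide section of $E$. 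Thus $(\cM_{XY},Q) = (E[-1],\iota_{s})$, and Theorem~\ref{thm:Main} gives at once that $\atiyahclass{(\cM_{XY},Q)} = 0$ if and only if $s$ intersects the zero section of $E$ cleanly. It then remains to show that this happens exactly when $X$ and $Y$ intersect cleanly in $W$.

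To that end I would make $(E[-1],\iota_{s})$ explicit using the chosen affine connection on $W$. Evaluation at the initial point identifies $E$ with the pullback of $TW$ along $\gamma \mapsto \gamma(0)$, and $\gamma \mapsto (\gamma(0),\gamma(1))$ identifies the base of $\cM_{XY}$ with an open neighbourhood $U$ of the diagonally embedded $X\cap Y$ inside $X\times Y$, a pair $(p,q)$ corresponding to the unique short geodesic from $p$ to $q$. Under these identifications the fibre of $E$ over $(p,q)$ is $T_{p}W$, and the section becomes $s\colon U \to E$, $s(p,q) = \exp_{p}^{-1}(q)$, the initial velocity of the short geodesic from $p$ to $q$, where $\exp$ is the exponential map of the connection on $W$. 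In particular $s^{-1}(0) = \{(p,p) : p \in X\cap Y\}$, and a routine argument with the diagonal $W \into W\times W$ shows that this is a submanifold of $U$ if and only if $X\cap Y$ is a submanifold of $W$.

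Next I would compute, at a point $z_{0} = (p_{0},p_{0}) \in \image(s)\cap \image(\sigma_{0}) \subseteq E$, the tangent spaces in the definition of clean intersection. Since $z_{0}$ lies on the zero section there is a canonical splitting $\tangentp{z_{0}}{E} \cong \bigl(\tangentp{p_{0}}{X}\oplus \tangentp{p_{0}}{Y}\bigr)\oplus \tangentp{p_{0}}{W}$, under which $\tangentp{z_{0}}{\image(\sigma_{0})}$ is the first summand and $\tangentp{z_{0}}{\image(s)} = \{(u,w,D_{z_{0}}s(u,w)) : (u,w)\in \tangentp{p_{0}}{X}\oplus \tangentp{p_{0}}{Y}\}$, with $D_{z_{0}}s$ the vertical derivative, well defined because $s(z_{0})=0$. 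From $\exp_{p}(v) = p + v + O(|v|^{2})$ in any local chart, together with $\exp_{p}^{-1}(p)\equiv 0$, one gets $D_{z_{0}}s(u,w) = w - u$ inside $\tangentp{p_{0}}{W}$, so that $\tangentp{z_{0}}{\image(s)}\cap \tangentp{z_{0}}{\image(\sigma_{0})} \cong \ker D_{z_{0}}s = \{(u,w) : u = w\} \cong \tangentp{p_{0}}{X}\cap \tangentp{p_{0}}{Y}$, while $\tangentp{z_{0}}{(\image(s)\cap \image(\sigma_{0}))}$ corresponds to $\tangentp{p_{0}}{(X\cap Y)}$ via the diagonal. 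Comparing these shows that $s$ meets the zero section cleanly precisely when $X\cap Y$ is smooth and $\tangentp{p_{0}}{(X\cap Y)} = \tangentp{p_{0}}{X}\cap \tangentp{p_{0}}{Y}$ for all $p_{0}\in X\cap Y$, i.e.\ precisely when $X$ and $Y$ intersect cleanly.

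The hard part will be this last step carried out carefully: justifying the canonical identification $\tangentp{z_{0}}{E} \cong \tangentp{z_{0}}{\image(\sigma_{0})}\oplus E_{z_{0}}$ along the zero section, verifying that the vertical derivative of $s$ there really is $(u,w)\mapsto w-u$ (a first-order computation with $\exp$, intrinsic since one is at a zero of $s$ and hence independent of any auxiliary connection on $E$), and confirming that ``$s^{-1}(0)$ is a submanifold of $U$'' is genuinely equivalent to ``$X\cap Y$ is a submanifold of $W$'' rather than something weaker. The independence of the whole picture from the choice of affine connection on $W$ is already part of the construction of \cite{MR4735657}, so no extra work is needed there; and the case $X\cap Y = \varnothing$ is trivial on both sides.
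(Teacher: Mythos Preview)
Your proposal is correct and follows the same overall route as the paper: invoke Theorem~\ref{thm:Main} to reduce to matching the two clean-intersection conditions, then identify $s^{-1}(0)\cong X\cap Y$ and compare $\ker Ds_{v}$ with $T_{p}X\cap T_{p}Y$ at each zero of $s$.

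The one notable difference is in that last comparison. The paper's Lemma~\ref{lem:dim} passes through a local model $M_{U}\hookrightarrow M$ built from the \emph{trivial} connection on a chart, and extracts only the inequality $\dim(T_{p}X\cap T_{p}Y)\le \dim\ker Ds_{v}$; the conclusion then relies on the always-true inclusion $T_{p}(X\cap Y)\subset T_{p}X\cap T_{p}Y$ to squeeze out equality. Your argument instead identifies the base near $s^{-1}(0)$ with an open set in $X\times Y$ via $\gamma\mapsto(\gamma(0),\gamma(1))$, writes $s(p,q)=\exp_{p}^{-1}(q)$, and reads off $D_{z_{0}}s(u,w)=w-u$ from the first-order expansion of $\exp$; this yields $\ker Ds_{v}\cong T_{p}X\cap T_{p}Y$ on the nose and makes both directions of the ``if and only if'' immediate. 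That is a genuine simplification over Lemma~\ref{lem:dim}. One small point to make explicit when you write it up: the identification of $M$ with a subset of $X\times Y$ is only a local diffeomorphism near $s^{-1}(0)$ (distant pairs may be joined by several short geodesics), but this is harmless since, after Theorem~\ref{thm:Main}, everything is local along the zero locus.
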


We conclude by outlining two directions for further investigation. First, it would be natural to extend our analysis to DG manifolds of arbitrary finite positive amplitude, as studied in \cite{MR4735657}. Second, it remains to be seen whether the Atiyah class in this setting is invariant under weak equivalences of DG manifolds. A positive answer would provide further evidence of its robustness as a derived geometric invariant.

\subsection*{Notations and conventions}
Throughout this paper, the base field is the field of real numbers $\RR$: vector spaces, manifolds, vector bundles, functions in this paper are over $\RR$, unless otherwise stated.

For any smooth function $f:\RR^{n}\to \RR$, we write $f=f(x_{1},\cdots, x_{n})$. The notation $x^{i}$ denotes the coordinate function $x^{i}:\RR^{n}\to \RR$ defined by $x^{i}(x_{1},\cdots, x_{n})=x_{i}$.

We reserve the symbol $M$ for a manifold exclusively. By a manifold, we mean a smooth manifold without boundary with Hausdorff and second countability properties. The sheaf of algebra of smooth functions on $M$ is denoted by $\cO_{M}$. The algebra of smooth functions on $M$ is denoted by $\smooth{M}=\cO_{M}(M)$.

All gradings in this paper are $\ZZ$-gradings and the symbol $\cM$ is reserved for a finite-dimensional graded manifold.
The abbreviation `DG' stands for `differential graded'. 

Let $R$ be a graded ring. Given any element $v$ in a graded $R$-module $V=\bigoplus_{k\in \ZZ} V^{k}$, the symbol $\degree{v}=d$ means that $v$ is a homogeneous element of degree $d$, or equivalently $v\in V^{d}$.  Whenever the symbol 
$|v|$ appears, we assume that $v$ is a homogeneous element.
The suspension of the graded $R$-module $V$ is $V[1]$ whose homogeneous component of degree $k$ is $(V[1])^{k}=V^{k+1}$. 
Given any graded vector bundle $\cE$, we use the symbol $S\cE$ to denote the bundle of graded symmetric tensor powers of $\cE$.

We reserve the symbol $E$ for an ordinary vector bundle $E\to M$, the symbol $\sigma_{0}$ for its zero section, and the symbol $s$ for a section of $E$.
By abuse of notation, we often identify a section $s\in \sections{E}$ with a section $s\in \sections{E[-1]}$, and similarly identify
 a $\tangent{M}$-connection $\nabla:\sections{\tangent{M}}\times \sections{E}\to \sections{E}$ on $E$ with a $\tangent{M}$-connection $\nabla:\sections{\tangent{M}}\times \sections{E[-1]}\to \sections{E[-1]}$ on $E[-1]$.

\section{Background on DG manifolds}\label{sec:1}
In this section, we present background material on the Atiyah class of DG manifolds. 
We refer the reader to \cite{MR2709144,MR3319134} for further details.

Let $M$ be a smooth manifold, and let $\cO_{M}$ be its sheaf of smooth functions. 
A \textbf{graded manifold} $\cM=(M,\cA)$ with base manifold $M$ is a sheaf of graded commutative $\cO_{M}$-algebras $\cA$ on $M$ such that 
there exists a $\ZZ$-graded vector space $V$ and, for each $p \in M$, an open neighbourhood $U\subset M$ of $p$ satisfying
\[\cA(U) \cong \big(\cO_{M}(U)\tensor S(V^{\vee})\big)^{sh},\]
where $S(V^{\vee})$ is the graded symmetric algebra of $V^{\vee}$, and where the superscript $sh$ denotes sheafification.
 The space of global sections of the sheaf $\cA$ will often be denoted by $\smooth{\cM}$.

A graded manifold $\cM$ is said to be of \textbf{amplitude $[n,m]$}, for $n\leq m$, if the graded vector space $V$ is of the form
\[V = \bigoplus_{i=n}^{m}V_{i} ,\]
where each $V_{i}$ consists of vectors of degree $i$. If $0<n\leq m$, then we say that $\cM$ is of positive amplitude. An amplitude of the form $[n,n]$ will simply be called amplitude $n$.

A graded manifold $\cM$ is called \textbf{finite dimensional} if both $\dim M<\infty$ and $\dim V < \infty$. All graded manifolds considered in this paper will be finite dimensional.

\begin{remark}
In some literature, such as \cite{MR3754617, MR3319134, MR4393962}, the sheaf of $\cO_{M}$-algebras $\cA$ is often defined using formal power series on $V$ rather than polynomial functions. However, all results in this paper remain valid for both definitions.
\end{remark}

\begin{example}\label{ex:GradedMfd}
   A graded vector bundle $\mathbb{E}$ over a manifold $M$ consists of a finite collection of ordinary vector bundles $E_{i}\to M$, $i\in \ZZ$, each of finite rank, such that $\mathbb{E}=\bigoplus_{i\in \ZZ}E_{i}[-i]$. 
   The degree $i$ component of the graded $\smooth{M}$-module $\sections{\mathbb{E}}$ is $\sections{E_{i}[-i]}$. 
   Any graded vector bundle $\mathbb{E}$ determines a graded manifold $\cM$: the sheaf of algebras of functions is defined by $\cA(U)=\sections{U; S(\mathbb{E})^{\vee}}$, where $S(\mathbb{E})^{\vee}$ is the bundle of graded symmetric powers of $\mathbb{E}^{\vee}$. 
   If $\mathbb{E}=\bigoplus_{i=n}^{m}E_{i}[-i]$, then we say the graded manifold $\cM$ is of amplitude $[n,m]$.
\end{example}

Let $\cM=(M,\cA)$ be a graded manifold. A \textbf{graded vector bundle} $\pi:\cE \to \cM$ is a vector bundle object in the category of graded manifolds. 
A section $s: \cM\to \cE$ is a morphism of graded manifolds such that $\pi\circ s = \id_{\cM}$. The $\smooth{\cM}$-module of all sections of $\cE$ over $\cM$ is denoted by $\sections{\cM;\cE}=\sections{\cE}$. 
In terms of sheaves, a graded vector bundle is a sheaf of locally free graded $\cA$-modules on $M$, whose global sections form $\smooth{\cM}$-module $\sections{\cE}$.

An important example of a graded vector bundle over $\cM$ is the \textbf{tangent bundle} $\tangent{\cM}$. 
A section of $\tangent{\cM}$ is called a vector field, and the space of vector fields $\sections{\tangent{\cM}}$, often denoted by $\XX(\cM)$, is identified with the space of graded derivations $\Der(\smooth{\cM})$. 
Equipped with the graded commutator, $\Der(\smooth{\cM})$ forms a Lie algebra; hence, so does $\sections{\tangent{\cM}}$.

Given a graded manifold $\cM$, a $\tangent{\cM}$-\textbf{connection} on a graded vector bundle $\cE$ is an $\RR$-bilinear map 
\[\nabla:\sections{\tangent{\cM}}\times \sections{\cE}\to \sections{\cE}\]
of degree $0$ satisfying:
\begin{enumerate}
\item $\nabla_{fX}s=f\nabla_X s$,
\item $\nabla_X (fs)= X(f)\cdot s+(-1)^{\degree{f}\degree{X}}f \nabla_X s$,
\end{enumerate}
for homogeneous $f\in\smooth{\cM}$, $X\in\XX(\cM)$ and $s\in \sections{\cE}$.

When $\cE=\tangent{\cM}$, the $\tangent{\cM}$-connection $\nabla$ is called an \textbf{affine connection}.
We say that an affine connection $\nabla$ is \textbf{torsion-free} if 
\[\nabla_{X}Y-(-1)^{\degree{X}\cdot \degree{Y}}\nabla_{Y}X = [ X,Y]\]
 for homogeneous $X,Y\in \XX(\cM)$. 
Torsion-free affine connections always exist.

A \textbf{DG manifold} is a graded manifold $\cM$ equipped with a homological vector field,
i.e., a vector field $Q\in\XX(\cM)$ of degree $+1$ satisfying $[Q,Q]=0$.

A \textbf{DG vector bundle} $\pi:(\cE,Q_{\cE})\to (\cM,Q)$ is a vector bundle object in the category of DG manifolds (see \cite{MR3319134} for a precise definition). Given a DG manifold $(\cM,Q)$, a graded vector bundle $\cE\to \cM$ admits a DG vector bundle structure if and only if there exists a degree $+1$ operator $\widetilde{Q}_{\cE}:\sections{\cE}\to\sections{\cE}$ such that 
$(\sections{\cE},\widetilde{Q}_{\cE})$ is a DG $(\smooth{\cM},Q)$-module. 
When no confusion arises, the DG module structure $\widetilde{Q}_{\cE}$ on $\sections{\cE}$ will be denoted by the same symbol $Q_{\cE}$.

In particular, the tangent bundle $\tangent{\cM}$ naturally carries the structure of a DG vector bundle $(\tangent{\cM},\liederivative{Q})$ over $(\cM,Q)$ where $\liederivative{Q}= [Q, \argument]$ denotes the Lie derivative along the homological vector field $Q$. 
The corresponding homological vector field $Q_{\tangent{\cM}}$ on $\tangent{\cM}$ is the tangent lift of the homological vector field $Q$ on $\cM$.

Consider the graded vector bundle $\cE= \cotangent{\cM}\tensor \End(\tangent{\cM}) \to \cM$. 
Define $Q_{\cE}$ as the degree +1 operator on the graded $\smooth{\cM}$-module
$\sections{\cotangent{\cM}\tensor \End(\tangent{\cM})}$:
\begin{equation}\label{eq:cQ}
Q_{\cE}:\sections{\cotangent{\cM} \tensor \End(\tangent{\cM})}^{\bullet}
\to\sections{\cotangent{\cM} \tensor \End(\tangent{\cM})}^{\bullet+1}
\end{equation}
given by the Lie derivative $\liederivative{Q}$ along $Q$:
\[ (Q_{\cE} F)(X,Y)=[Q,F(X,Y)]-(-1)^{k}F([Q,X],Y)-(-1)^{k+\degree X}F(X,[Q,Y]) \]
for any degree $k$ $(1,2)$-tensor field
$F\in\sections{\cotangent{\cM}\tensor\End(\tangent{\cM})}^{k}$
and for homogeneous vector fields $X,Y\in\XX(\cM)$. 
It is straightforward to verify that $(\cE,Q_{\cE})=(\cotangent{\cM}\tensor\End(\tangent{\cM}),\liederivative{Q})$ is a DG vector bundle over $(\cM,Q)$.

Now, given an affine connection $\nabla$ on $\cM$, consider the $(1,2)$-tensor $\atiyahcocycleQ\in\sections{\cotangent{\cM}
\tensor\End(\tangent{\cM})}$ of degree +1, defined by
\[\atiyahcocycleQ (X,Y) = [Q, \nabla_{X}Y] - \nabla_{[Q,X]}Y-(-1)^{\degree X} \nabla_{X}[Q,Y] \]
for homogeneous $X,Y\in \XX(\cM)$.

\begin{proposition}[\cite{MR3319134}]\label{prop:AtiyahProperty}
Let $(\cM,Q)$ be a DG manifold, and let $\nabla$ be an affine connection on $\cM$. Then the following hold:
\begin{enumerate}
\item If $\nabla$ is torsion-free, then
$\atiyahcocycleQ\in\sections{S^{2}(\cotangent{\cM})\tensor\tangent{\cM}}$.
In other words,
\[\atiyahcocycleQ(X,Y)=(-1)^{\degree X\cdot\degree Y}\atiyahcocycleQ(Y,X) .\]
\item The degree $1$ element $\atiyahcocycleQ \in \sections{\cotangent{\cM} \tensor \End(\tangent{\cM})}^{1}$ is a $1$-cocycle.
\item The cohomology class $[\atiyahcocycleQ]$ does not depend on the choice of connection.
\end{enumerate}
\end{proposition}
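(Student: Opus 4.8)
The plan is to verify the three claims by direct computation, using only three ingredients: the graded Jacobi identity for the commutator bracket on $\XX(\cM)$, the identity $[Q,[Q,\argument]]=0$ (which follows from $[Q,Q]=0$, since the graded Jacobi identity gives $2[Q,[Q,Z]]=[[Q,Q],Z]=0$), and the Leibniz rules defining an affine connection. Throughout, the only real difficulty is the Koszul-sign bookkeeping, not the conceptual structure.

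For part (1), I would expand $\atiyahcocycleQ(X,Y)-(-1)^{\degree X\degree Y}\atiyahcocycleQ(Y,X)$ and regroup its six terms into three pairs. Torsion-freeness applied to the pair $(X,Y)$ combines $[Q,\nabla_X Y]$ and $-(-1)^{\degree X\degree Y}[Q,\nabla_Y X]$ into $[Q,[X,Y]]$. Torsion-freeness applied to the pair $([Q,X],Y)$—here the point is that $\degree{[Q,X]}=\degree X+1$, which is exactly what makes $(-1)^{\degree X\degree Y+\degree Y}$ equal $(-1)^{(\degree X+1)\degree Y}$—combines $-\nabla_{[Q,X]}Y$ with $(-1)^{\degree X\degree Y+\degree Y}\nabla_Y[Q,X]$ into $-[[Q,X],Y]$; symmetrically, torsion-freeness on the pair $(X,[Q,Y])$ yields $-(-1)^{\degree X}[X,[Q,Y]]$ from the remaining two terms. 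The total $[Q,[X,Y]]-[[Q,X],Y]-(-1)^{\degree X}[X,[Q,Y]]$ then vanishes by the graded Jacobi identity with $\degree Q=1$.

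For part (2), I would first check that $\atiyahcocycleQ$ is $\smooth{\cM}$-bilinear, so that it really is a degree-$1$ element of $\sections{\cotangent{\cM}\tensor\End(\tangent{\cM})}$ to which $Q_\cE$ applies: writing $D:=\liederivative{Q}=[Q,\argument]$, one has $\atiyahcocycleQ(X,Y)=D(\nabla_X Y)-\nabla_{DX}Y-(-1)^{\degree X}\nabla_X(DY)$, and the non-tensorial terms that arise when a function is inserted into either slot cancel because $D$ is a derivation and $\nabla$ satisfies the Leibniz rule. Then, expanding $(Q_\cE\atiyahcocycleQ)(X,Y)$ using the sign convention of~\eqref{eq:cQ}, all summands of the form $D(\nabla_{DX}Y)$, $D(\nabla_X(DY))$ and $\nabla_{DX}(DY)$ cancel in pairs, leaving precisely $D^2(\nabla_X Y)-\nabla_{D^2 X}Y-\nabla_X(D^2 Y)$, which is zero since $[Q,[Q,\argument]]=0$. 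I expect this last bookkeeping to be the main obstacle, although it becomes mechanical once the square-zero identity has been isolated; heuristically the cocycle condition holds because $\atiyahcocycleQ$ is the result of applying the square-zero operator $\liederivative{Q}$ to $\nabla$.

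Finally, for part (3), given affine connections $\nabla,\nabla'$ on $\cM$, their difference $\omega:=\nabla'-\nabla$ is $\smooth{\cM}$-bilinear, hence a degree-$0$ element of $\sections{\cotangent{\cM}\tensor\End(\tangent{\cM})}$. A one-line expansion shows that both $\atiyahcocycle{\nabla'}{(\cM,Q)}-\atiyahcocycle{\nabla}{(\cM,Q)}$ and $Q_\cE\omega$ equal the tensor $(X,Y)\mapsto[Q,\omega(X,Y)]-\omega([Q,X],Y)-(-1)^{\degree X}\omega(X,[Q,Y])$; hence the two Atiyah cocycles differ by the coboundary $Q_\cE\omega$ and define the same cohomology class.
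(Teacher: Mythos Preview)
Your proof is correct: the sign bookkeeping in all three parts checks out, and the reductions to the graded Jacobi identity (part~1), the square-zero property $\liederivative{Q}^2=0$ (part~2), and the tensoriality of $\nabla'-\nabla$ (part~3) are exactly the right mechanisms. Note, however, that the paper does not supply its own proof of this proposition; it is quoted from~\cite{MR3319134} as background, so there is no in-paper argument to compare against.
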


The element $\atiyahcocycleQ$ is called the \textbf{Atiyah cocycle}
associated with the affine connection $\nabla$.
The cohomology class $\atiyahclassQ:=[\atiyahcocycleQ]\in H^{1}
\big(\sections{\cotangent{\cM} \tensor \End(\tangent{\cM})}^{\bullet},\liederivative{Q} \big)$
is called the \textbf{Atiyah class} of the DG manifold $(\cM,Q)$ \cite{MR3319134}.
See also \cite{arXiv:math/9812009} and \cite{MR2608525}.
The Atiyah class of a DG manifold $(\cM,Q)$ is an obstruction to the existence of an affine connection on $\cM$ that is compatible with $Q$.

\section{The Atiyah class and clean intersection}\label{sec:Main}

As shown in \cite{MR4735657}, the category of DG manifolds of positive amplitude is equivalent to the category of bundles of positively graded curved $L_{\infty}[1]$ algebras. 
In particular, any DG manifold of amplitude $+1$ is of the form $(\cM,Q)=(E[-1],\iota_{s})$, where $E\to M$ is a vector bundle and $\iota_{s}$ denotes the interior product with a section $s\in \sections{E}$.

Explicitly, the graded algebra of smooth functions is the algebra of sections of graded symmetric tensor powers on the graded dual of $E[-1]$; that is,
\[\smooth{E[-1]}=\sections{S(E[-1])^{\vee}}\cong \sections{S(E^{\vee}[1])},\] 
and the homological vector field is $Q=\iota_{s}$, the interior product with $s\in \sections{E}$. To match with the grading conventions, $s$ is often viewed as an element of $\sections{E[1]}$.
Together, they define a cochain complex of $\smooth{M}$-modules
\[\cdots \to \sections{S^{2}(E[-1])^{\vee}}\xto{\iota_{s}} \sections{S^{1}(E[-1])^{\vee}} \xto{\iota_{s}} \smooth{M} \to 0,\]
equipped with multiplication structure induced by the graded symmetric product, making it into a DG algebra.

\begin{remark}
The graded algebra of functions on $\cM=E[-1]$ can be identified with
\[\smooth{E[-1]}\cong \sections{\Lambda^{-\bullet}E^{\vee}},\]
where the degree $n$ component is $\sections{\Lambda^{-n}E^{\vee}}$. Although the exterior algebra (with wedge notation) is commonly used in this context, it often disagrees with other grading conventions and can therefore lead to confusion. For this reason, we intentionally avoid the wedge notation and instead work with graded symmetric tensors.
\end{remark}

Given a DG manifold $(E[-1],\iota_{s})$, there are two natural embeddings of $M$ into $E$: the section $s:M\to E$, and the zero section $\sigma_{0}: M\to E$. One can then consider their intersection,
\[Z:= \image (s) \cap \image (\sigma_{0}).\] 
It turns out that the Atiyah class of the DG manifold $(E[-1],\iota_{s})$ measures the failure of this intersection to be clean.

The following is our main theorem.
\begin{theorem}\label{thm:Main}
Let $M$ be a smooth manifold, and let $E$ be a vector bundle over $M$. Given a section $s\in \sections{E}$, the Atiyah class of the DG manifold $(E[-1],\iota_{s})$ vanishes if and only if the intersection of $s$ with the zero section $\sigma_{0}$ is clean.
\end{theorem}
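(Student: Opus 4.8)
The plan is to exploit the locality principle announced in the introduction: for DG manifolds of positive amplitude the homological vector field $Q = \iota_s$ is $\smooth{M}$-linear, so $\liederivative{Q}$ fixes base functions, and hence a partition of unity argument shows that $\atiyahclassQ$ vanishes if and only if it vanishes after restriction to each member of an open cover of $M$. First I would make this reduction precise: choose a good cover $\{U_\alpha\}$ of $M$ over which $E$ is trivialised, observe that restriction of tensors and connections is compatible with $\liederivative{Q}$, and show that a global Atiyah cocycle is a coboundary iff its restrictions are (the matching of local primitives across overlaps is where $\smooth{M}$-linearity of $Q$ is used, via a partition of unity subordinate to the cover). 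This step is largely formal but must be stated carefully, so I would isolate it as a lemma (``locality of the Atiyah class in positive amplitude'') before the main computation.

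Next, working on a single trivialising chart $U$ with coordinates $x_1,\dots,x_n$ and a frame $e_1,\dots,e_r$ of $E$, I would write $s = \sum_a f^a e_a$ with $f^a \in \smooth{U}$, so that on $U$ the DG manifold is $(\RR^n \times \RR^r[-1], \iota_s)$ with $\smooth{\cM}$ generated over $\smooth{U}$ by degree $-1$ generators $\theta^a$ dual to $e_a$ and $Q = \sum_a f^a \,\partial/\partial\theta^a$. Taking the standard flat connection $\nabla$ (with $\nabla_{\partial_i}\partial_j = 0$, $\nabla_{\partial_i}\partial_{\theta^a} = 0$, and torsion-free), I would compute the Atiyah cocycle $\atiyahcocycleQ$ explicitly on the coordinate vector fields. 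The only nonvanishing components come from $[Q,\partial_i]$ and $[Q,\partial_{\theta^a}]$: one finds $[Q,\partial_i] = -\sum_a (\partial_i f^a)\,\partial_{\theta^a}$, and tracking signs through $\atiyahcocycleQ(X,Y) = [Q,\nabla_X Y] - \nabla_{[Q,X]}Y - (-1)^{\degree X}\nabla_X[Q,Y]$, the cocycle is governed by the Hessian-type expressions $\partial_i\partial_j f^a$ together with the frame-derivative terms. I expect the upshot to be that, relative to this connection, $\atiyahcocycleQ$ is represented by the second derivatives of the $f^a$ restricted to the common zero locus $Z \cap U = \{f^1 = \dots = f^r = 0\}$, with the first-derivative data already exact (it is a $\liederivative{Q}$-coboundary because it is $\liederivative{Q}$ of the connection-difference tensor encoding the linear term).

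Then I would translate the clean-intersection condition into this local picture. Since $\image(s) \cap \image(\sigma_0)$ over $U$ is cut out by $f^1 = \dots = f^r = 0$, cleanness is equivalent to: $Z\cap U$ is a submanifold and at each $p \in Z\cap U$ the tangent space $\tangentp{p}Z$ equals $\ker(d_p f^1) \cap \dots \cap \ker(d_p f^r)$; equivalently, the $df^a$ span a constant-rank subbundle of $\cotangent{M}$ along $Z$ whose annihilator is $TZ$. I would show this is exactly the vanishing of the relevant cohomology class by examining the complex $\big(\sections{\cotangent{\cM}\tensor\End(\tangent{\cM})}^\bullet, \liederivative{Q}\big)$ in low degrees on $U$: a degree $+1$ cocycle is a coboundary precisely when the obstruction living in (roughly) $\sections{S^2 N^\vee \tensor N}$ along $Z$ — the second-order part of $s$ transverse to $Z$ — vanishes, and this second-order part is identically the obstruction to $df^1,\dots,df^r$ being ``integrable to'' the submanifold $Z$, i.e. to cleanness.

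The main obstacle I anticipate is the second, purely computational core: correctly organising the $\liederivative{Q}$-complex of $(1,2)$-tensors on the chart, getting all Koszul signs right in $\atiyahcocycleQ$, and — most delicately — proving that the ``bad'' component of the cocycle is genuinely not a coboundary when the intersection fails to be clean (the easy direction, cleanness $\Rightarrow$ vanishing, should follow by building a $Q$-compatible connection near $Z$, e.g. by choosing a connection adapted to the splitting $TM|_Z = TZ \oplus N$ and extending). Establishing non-exactness amounts to identifying a $\liederivative{Q}$-invariant quotient of the tensor complex that the class maps to isomorphically, and I expect this to require a careful choice of local frame (e.g. completing $df^a$ to a coframe when the rank is locally constant, and a normal-form argument when it is not) to pin down the cohomology.
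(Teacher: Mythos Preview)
Your overall architecture --- reduce to local charts via a locality lemma for positive amplitude, compute the Atiyah cocycle in a trivialisation with the flat connection, and compare with a local normal form for clean intersection --- matches the paper exactly, and your identification of the cocycle with the Hessian $\partial_i\partial_j f^a$ is correct. The easy direction also goes through essentially as you say: the paper's version is simply that once $s$ is put in the linear form $s = x_1 e_1 + \cdots + x_r e_r$ (the paper's local characterisation of cleanness, Proposition~\ref{prop:CleanIntersection}), the Hessian vanishes identically.

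The genuine gap is in the hard direction (vanishing $\Rightarrow$ clean). Your plan is to ``identify a $\liederivative{Q}$-invariant quotient of the tensor complex that the class maps to isomorphically'' and read off the second-order obstruction along $Z$. The paper does \emph{not} produce any such clean quotient, and I do not see how to make one: the coboundary image $\image(d_1+d_2+d_3)$ in degree~$+1$ depends on $s$ and its first derivatives in a way that resists being packaged as restriction to $Z$. What the paper actually does is quite different and rather surprising. After using the inverse function theorem to arrange $s^1 = x^1,\dots,s^r = x^r$ and $\partial_i s^j(0)=0$ for $i,j>r$, it unpacks the coboundary condition $\At\in\image(d_1+d_2+d_3)$ row by row to extract, for each $i,j,k>r$, an ideal membership
\[
\partial_{ij}s^k \in \langle x^1,\dots,x^r,\; s^{r+1},\dots,s^m,\; \partial_i s^{r+1},\dots,\partial_j s^m,\; \partial_{r+1}s^k,\dots,\partial_n s^k\rangle.
\]
This is then fed into an honest ODE argument: restricting to a line in $\{0\}^r\times\RR^{n-r}$, the functions $s^k$ and $\partial_\ell s^k$ satisfy a first-order system $y' = F(t,y)$ with initial value zero, and Picard--Lindel\"of forces $s\equiv 0$ on $\{0\}^r\times\RR^{n-r}$. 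Only then does a Hadamard-lemma/normal-form step (the paper's Lemma~\ref{lem:Calculus}) deliver the linear form of $s$ and hence cleanness. Your proposal does not anticipate this analytic step, and without it the implication ``Atiyah class vanishes $\Rightarrow$ $s$ admits a linear normal form near each zero'' remains unproved.
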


\subsection{Clean intersection}
Let $X$ and $Y$ be submanifolds of $W$. 
We say that $X$ and $Y$ \textbf{intersect cleanly} if their intersection $Z:=X\cap Y$ is a submanifold and 
\[\tangentp{p}{Z}=\tangentp{p}{X}\cap \tangentp{p}{Y}, 
\quad \forall p\in Z.\]
Moreover, we say that the embeddings $f:X\to W$ and $g:Y\to W$ intersect cleanly if their images intersect cleanly.

\begin{remark}
Clean intersections of connected submanifolds may not have a well-defined dimension. More precisely, let $X$ and $Y$ be connected submanifolds of a connected manifold $W$. Even if $X$ and $Y$ intersect cleanly, their intersection $Z=X\cap Y$ may have multiple connected components---say, $Z=Z_{1}\sqcup Z_{2}$---with $\dim Z_{1}\neq \dim Z_{2}$.
\end{remark}

The following proposition which characterises clean intersection is well-known.
\begin{proposition}\cite[Proposition~C.3.1]{MR2304165} \label{prop:CleanGen}
  $X$ and $Y$ intersect cleanly if and only if, for each $p \in X\cap Y$, there exists a coordinate chart on a neighbourhood $U\subset W$ such that both $X\cap U$ and $Y\cap U$ are defined by linear equations of the local coordinate.
\end{proposition}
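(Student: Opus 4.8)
The statement to prove is the characterisation of clean intersection recorded as Proposition~\ref{prop:CleanGen}: that $X$ and $Y$ intersect cleanly if and only if, near each point $p \in X\cap Y$, there is a coordinate chart on a neighbourhood $U\subset W$ in which both $X\cap U$ and $Y\cap U$ are cut out by linear equations in the local coordinates. I would prove the two implications separately, with the reverse direction being essentially immediate and the forward direction requiring the real work.

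\emph{The easy direction.} Suppose such a chart exists at each $p\in Z:=X\cap Y$. If both $X\cap U$ and $Y\cap U$ are coordinate subspaces (zero loci of some of the coordinate functions $x^{i}$), then their intersection $Z\cap U$ is again cut out by the union of those linear equations, hence a coordinate subspace and in particular a submanifold. Moreover, in linear coordinates the tangent space of a coordinate subspace at a point is literally the span of the complementary coordinate directions, so $\tangentp{p}{X}$ and $\tangentp{p}{Y}$ are coordinate subspaces of $\tangentp{p}{W}$ and their intersection is visibly $\tangentp{p}{Z}$. This gives $\tangentp{p}{Z}=\tangentp{p}{X}\cap\tangentp{p}{Y}$ directly, establishing clean intersection.

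\emph{The hard direction.} Conversely, assume $X$ and $Y$ intersect cleanly. The plan is to straighten out $X$, $Y$, and $Z$ simultaneously. First I would choose coordinates centred at $p$ adapted to $X$: since $X$ is a submanifold, there is a chart in which $X\cap U$ is a coordinate subspace. The difficulty is that $Y$ need not be linear in this chart. The key structural input is the cleanness hypothesis $\tangentp{p}{Z}=\tangentp{p}{X}\cap\tangentp{p}{Y}$, which controls the first-order behaviour of $Y$ relative to $X$ along $Z$; I would use it to show that the projection of $Y$ onto the normal directions of $X$ behaves, to first order along $Z$, like a submersion onto a complementary subspace. Concretely, the local defining equations of $Y$, restricted to the slice normal to $X$, have a differential at $p$ whose kernel is exactly $\tangentp{p}{Z}$ by the cleanness equality; an application of the constant rank theorem (or the implicit function theorem along $Z$) then lets me adjust the remaining coordinates so that $Y$ is also cut out by linear equations, while keeping $X$ linear. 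The main obstacle is precisely this simultaneous linearisation: one must verify that the coordinate change straightening $Y$ does not destroy the linearity of $X$, and that the constant-rank hypothesis needed to apply the normal-form theorem is supplied uniformly along $Z$ rather than just at the single point $p$. This is where the cleanness condition---as opposed to mere transversality---is indispensable, since it guarantees that $Z$ is a submanifold of the expected dimension and that the rank is locally constant.

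I expect that carefully setting up the adapted chart and tracking which coordinate functions remain linear under the second straightening is the crux; the tangent-space computation in the easy direction and the bookkeeping of linear equations are routine once the normal form is in place.
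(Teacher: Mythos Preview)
The paper does not prove Proposition~\ref{prop:CleanGen}: it is quoted as a well-known result and attributed to \cite[Proposition~C.3.1]{MR2304165} (H\"ormander), so there is no ``paper's own proof'' to compare against. Your plan is the standard one and is essentially how the cited reference proceeds.

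That said, your sketch of the hard direction is still somewhat soft at the crucial step. You propose to straighten $X$ first and then adjust coordinates to linearise $Y$ while keeping $X$ linear, invoking a constant-rank argument ``along $Z$''. Two points deserve care. First, the rank you need is not just constant at the single point $p$ but in a full neighbourhood, and cleanness by itself only gives you the tangent condition \emph{at points of $Z$}; you should explain why the relevant map has constant rank on an open set (typically one argues via lower semicontinuity of rank together with the fact that $Z$ is a submanifold of the correct dimension). Second, the usual clean way to organise the argument is not to straighten $X$ first but to straighten $Z$ first: choose coordinates in which $Z$ is a coordinate subspace, then use that both $X$ and $Y$ contain $Z$ with prescribed tangent spaces along $Z$ to simultaneously flatten them by a single further coordinate change. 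This ordering avoids the worry you flag about the second straightening spoiling the first, since both $X$ and $Y$ are treated symmetrically relative to $Z$.
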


In this subsection, we focus in particular on intersections arising from DG manifolds of amplitude $+1$, namely, the intersection of a section $s:M\to E$ with the zero section $\sigma_{0}:M\to E$ of a vector bundle $E\to M$, and we prove an analogue of the above theorem in this setting.
Throughout this subsection, we assume that $E\to M$ is a vector bundle with $\dim M=n$ and $\rank E= m$, and that $s\in \sections{E}$.

Note that any smooth sections of a smooth vector bundle are embeddings.
\begin{proposition}\label{prop:CleanIntersection}
	The intersection of $s$ with the zero section $\sigma_{0}$ is clean if and only if, for each $p\in s^{-1}(0)$, there exists an open neighbourhood $U\subset M$ of $p$,
  a local coordinate chart $\phi:U \to \RR^{n}$, and a local frame $\{e_{1},\dots, e_{n}\}$ of $E$ over $U$ such that
	\begin{equation}\label{eq:SectionLocalForm}
    s\circ \phi^{-1}(x_{1},\ldots,x_{n})=x_{1} e_{1}+\cdots +x_{r} e_{r}
  \end{equation}
  for some $r$.
\end{proposition}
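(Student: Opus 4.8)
The plan is to prove both implications by passing to a local trivialisation of $E$ near a point $p\in s^{-1}(0)$ and invoking the rank theorem. Fix such a $p$, choose a neighbourhood $U$ of $p$ over which $E$ admits a frame $\{e_{1},\dots,e_{m}\}$ (with $m=\rank E$), write $s=\sum_{a=1}^{m}s^{a}e_{a}$ for smooth functions $s^{a}\in\smooth{U}$, and set $\tilde s=(s^{1},\dots,s^{m}):U\to\RR^{m}$. In the induced identification $E|_{U}\cong U\times\RR^{m}$ one has $\image(\sigma_{0})\cap E|_{U}=\{y=0\}$ and $\image(s)\cap E|_{U}=\{(q,\tilde s(q)):q\in U\}$, while $\sigma_{0}$ identifies $Z=\image(s)\cap\image(\sigma_{0})$ with $\tilde s^{-1}(0)$. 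First I would record the elementary fact that, under this identification, $\tangentp{q}{\image(s)}\cap\tangentp{q}{\image(\sigma_{0})}$ corresponds to $\ker(d\tilde s_{q})$; since $\tilde s$ vanishes on $Z$ one always has $\tangentp{q}{Z}\subseteq\ker(d\tilde s_{q})$, so cleanness of the intersection is equivalent to the following: $Z$ is a submanifold and $\rank(d\tilde s_{q})=\operatorname{codim}_{q}Z$ for every $q\in Z$.

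For the ``if'' direction, suppose $s$ has the asserted local form on a neighbourhood of each point of $s^{-1}(0)$. In the corresponding trivialisation and chart, $\image(\sigma_{0})\cap E|_{U}$ and $\image(s)\cap E|_{U}$ are cut out by the linear equations $y=0$ and $y_{1}=x_{1},\dots,y_{r}=x_{r},\ y_{r+1}=\dots=y_{m}=0$ respectively, so Proposition~\ref{prop:CleanGen} immediately gives that the intersection is clean.

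For the ``only if'' direction, assume the intersection is clean, so $Z=s^{-1}(0)$ is a submanifold; let $r=\operatorname{codim}_{p}Z$. By the observation above $\rank(d\tilde s_{p})=r$, so some $r\times r$ minor of the Jacobian of $\tilde s$ is invertible at $p$; after permuting the $e_{a}$ and the coordinates I may assume it is $\partial(s^{1},\dots,s^{r})/\partial(x_{1},\dots,x_{r})$. Then $\Phi:=(s^{1},\dots,s^{r},x_{r+1},\dots,x_{n})$ has invertible differential at $p$, hence is a chart near $p$, and in the coordinates given by $\Phi$ we have $s^{i}=x_{i}$ for $i\le r$. Since $Z=\tilde s^{-1}(0)$ is a codimension-$r$ submanifold contained in the codimension-$r$ submanifold $\{x_{1}=\dots=x_{r}=0\}$, the two coincide near $p$; hence $s^{r+1},\dots,s^{m}$ vanish on $\{x_{1}=\dots=x_{r}=0\}$, and Hadamard's lemma yields $s^{a}=\sum_{j=1}^{r}x_{j}\,g^{a}_{j}$ for $a>r$ with smooth functions $g^{a}_{j}$. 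Consequently
\[ s=\sum_{i=1}^{r}x_{i}e_{i}+\sum_{a=r+1}^{m}\Bigl(\sum_{j=1}^{r}x_{j}g^{a}_{j}\Bigr)e_{a}=\sum_{j=1}^{r}x_{j}\Bigl(e_{j}+\sum_{a=r+1}^{m}g^{a}_{j}e_{a}\Bigr), \]
and replacing $e_{j}$ by $\tilde e_{j}:=e_{j}+\sum_{a>r}g^{a}_{j}e_{a}$ for $j\le r$ (leaving $e_{a}$ with $a>r$ unchanged)---a unipotent, hence invertible, change of local frame---puts $s$ in the desired form $x_{1}\tilde e_{1}+\dots+x_{r}\tilde e_{r}$.

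The steps I would leave as routine are the identification of the intersecting tangent spaces inside the trivialisation and the verification that $\{\tilde e_{j}\}$ is again a frame. The one point requiring genuine care---and which I expect to be the main obstacle---is deducing the constant-rank condition $\rank(d\tilde s_{p})=\operatorname{codim}_{p}Z$ from cleanness: that is, that the tangent-space equality forces the Jacobian of $\tilde s$ to have precisely the rank which makes $(s^{1},\dots,s^{r},x_{r+1},\dots,x_{n})$ a legitimate coordinate system, with the complementary components $s^{a}$ then automatically divisible by $x_{1},\dots,x_{r}$.
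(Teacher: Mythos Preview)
Your proof is correct and follows essentially the same route as the paper: both arguments hinge on the characterisation ``cleanness $\Leftrightarrow$ $Z$ is a submanifold and $\rank(d\tilde s_{p})=\operatorname{codim}_{p}Z$'' (the paper's Lemma~\ref{lem:Rank}), followed by the inverse function theorem to arrange $s^{i}=x_{i}$ for $i\le r$, Hadamard's lemma for the remaining components, and the unipotent change of frame (the paper's Lemma~\ref{lem:Calculus}). The only organisational differences are that for the ``if'' direction you invoke Proposition~\ref{prop:CleanGen} whereas the paper appeals directly to Lemma~\ref{lem:Rank}, and for ``only if'' you replace the paper's use of the tubular neighbourhood theorem by the observation that $Z\subset\{x_{1}=\dots=x_{r}=0\}$ are equidimensional submanifolds through $p$, hence coincide near $p$; your worry in the final paragraph is unfounded, since the rank condition is exactly the rank--nullity reformulation of the tangent-space equality you established at the outset.
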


To prove this proposition, we begin with some preparation. 

For each $p\in M$,
denote by 
\[\tangent{s}_{p}:\tangentp{p}{M}\to \tangentp{s(p)}{E}\]
the tangent map at $p$ induced by the section $s:M\to E$. If $p\in s^{-1}(0)$, there is a natural splitting 
\begin{equation}\label{eq:TE}
\tangentp{s(p)}E \cong \tangentp{p}M \oplus E_{p}.
\end{equation}
Composing $\tangent{s}_{p}$ with the natural projection $\pr: \tangentp{p}{M} \oplus E_{p}\to E_{p}$, we obtain a map 
\begin{equation}\label{eq:Dsp}
\begin{tikzcd}
Ds_{p}:\tangentp{p}{M} \arrow{r}{\tangent{s}_{p}} & \tangentp{s(p)}E \cong \tangentp{p}{M} \oplus E_{p} \arrow{r}{\pr} & E_{p} 
\end{tikzcd}  
\end{equation}
defined at each $p\in s^{-1}(0)$.

\begin{lemma}\label{lem:Rank}
The intersection of $s$ with the zero section $\sigma_{0}$ is clean if and only if $Z:=s^{-1}(0)$ is a manifold and, for every $p\in Z$, 
\[\dim \tangentp{p}{Z}+\rank Ds_{p}=\dim \tangentp{p}M. \]
\end{lemma}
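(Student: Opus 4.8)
The plan is to prove Lemma~\ref{lem:Rank} by analysing both conditions pointwise on $Z = s^{-1}(0)$ and relating them through the linear map $Ds_p$ introduced in \eqref{eq:Dsp}. First I would identify $\image(s)$ with $M$ via the embedding $s$, so that $\tangentp{s(p)}{\image(s)} = \tangent{s}_p(\tangentp{p}{M})$ inside $\tangentp{s(p)}{E}$, and likewise $\tangentp{s(p)}{\image(\sigma_0)} = \tangentp{p}{M} \oplus 0$ under the splitting \eqref{eq:TE}. The first observation is that $\image(s) \cap \image(\sigma_0)$, as a subset of $E$, is exactly $s(Z)$, so the intersection is a submanifold of $E$ if and only if $Z$ is a submanifold of $M$ --- this handles the ``$Z$ is a manifold'' clause in both the definition of clean intersection and the statement of the lemma.

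Next, assuming $Z$ is a submanifold, I would compute the tangent space $\tangentp{s(p)}{\image(s)} \cap \tangentp{s(p)}{\image(\sigma_0)}$ explicitly. A vector $\tangent{s}_p(v)$ for $v \in \tangentp{p}{M}$ lies in $\tangentp{p}{M} \oplus 0$ precisely when its $E_p$-component vanishes, i.e.\ when $Ds_p(v) = 0$; hence this intersection is canonically identified with $\ker Ds_p \subseteq \tangentp{p}{M}$, which has dimension $\dim M - \rank Ds_p$. On the other hand, the clean intersection condition requires $\tangentp{p}{Z} = \tangentp{p}{\image(s)} \cap \tangentp{p}{\image(\sigma_0)}$ (transported to $\tangentp{s(p)}{E}$ via $\tangent{s}_p$, which is injective), and since one always has the inclusion $\tangentp{p}{Z} \subseteq \ker Ds_p$ --- because $s$ vanishes on $Z$, so $Ds_p$ kills $\tangentp{p}{Z}$ --- the clean intersection condition at $p$ is equivalent to the equality $\tangentp{p}{Z} = \ker Ds_p$, which in turn is equivalent to the dimension equality $\dim \tangentp{p}{Z} = \dim M - \rank Ds_p$, i.e.\ $\dim \tangentp{p}{Z} + \rank Ds_p = \dim \tangentp{p}{M}$.

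Putting these together: the intersection is clean iff $Z$ is a submanifold and, for every $p \in Z$, the equality $\tangentp{p}{Z} = \ker Ds_p$ holds; and by the preceding dimension count this is exactly the condition stated in the lemma. The main obstacle --- really the only non-formal point --- is verifying carefully that the splitting \eqref{eq:TE} is compatible with the geometry, namely that under the identification $\tangentp{s(p)}{E} \cong \tangentp{p}{M} \oplus E_p$ the subspace $\tangentp{s(p)}{\image(\sigma_0)}$ is exactly the first summand and that the $E_p$-component of $\tangent{s}_p(v)$ is independent of the choice of splitting when $s(p) = 0$ (this last independence is precisely what makes $Ds_p$ well defined, and is already implicit in the discussion preceding \eqref{eq:Dsp}). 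I would also remark that the inclusion $\tangentp{p}{Z} \subseteq \ker Ds_p$ needs a word of justification: for a curve $\gamma$ in $Z$ with $\gamma(0) = p$, we have $s(\gamma(t)) = 0$ for all $t$, so differentiating gives $\tangent{s}_p(\dot\gamma(0)) = 0$, hence in particular its $E_p$-component $Ds_p(\dot\gamma(0))$ vanishes. With these points in place the equivalence follows from elementary linear algebra.
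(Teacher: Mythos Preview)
Your argument is correct and follows essentially the same route as the paper's proof: identify $\image(s)\cap\image(\sigma_0)$ with $Z$ via $s$, use the splitting~\eqref{eq:TE} to show $\tangentp{s(p)}{\image(s)}\cap\tangentp{s(p)}{\image(\sigma_0)}\cong\ker Ds_p$, observe $\tangentp{p}{Z}\subseteq\ker Ds_p$, and convert to the dimension equality via rank--nullity. One small slip: from $s(\gamma(t))=0_{\gamma(t)}$ you do not get $\tangent{s}_p(\dot\gamma(0))=0$ (the curve $s\circ\gamma=\sigma_0\circ\gamma$ is not constant in $E$), but rather $\tangent{s}_p(\dot\gamma(0))=\tangent{(\sigma_0)}_p(\dot\gamma(0))\in\tangentp{p}{M}\oplus 0$, which still yields the desired conclusion $Ds_p(\dot\gamma(0))=0$.
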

  
\begin{proof}
Let $X=\image(s)$ and $Y=\image(\sigma_{0})$. By definition, the intersection of the two sections $s$ and $\sigma_{0}$, viewed as embeddings, is the intersection $X\cap Y$. It follows directly from the construction that $X\cap Y$ is identified with the zero locus $Z=s^{-1}(0)$, via $s$. Hence, $X\cap Y$ is a manifold if and only if $Z$ is a manifold.

Next, we claim that
\[\tangentp{s(p)}{X} \cap \tangentp{s(p)}{Y} \cong \ker Ds_{p}\]
for each $p\in Z$. To see this, note that $\tangentp{s(p)}{X}=\image(\tangent{s}_{p})$ as a subspace of $\tangentp{s(p)}(E)$. Moreover, if $p\in Z$, then 
\[\tangentp{s(p)}{X} \cong \{(v,Ds_{p}(v))\in \tangentp{p}{M}\oplus E_{p}: \forall v\in \tangentp{p}{M}\}\]
via the splitting~\eqref{eq:TE}. 
Under the same splitting,
\[\tangentp{s(p)}Y \cong  \{(v,0)\in \tangentp{p}{M}\oplus E_{p} :  \forall v \in \tangentp{p}{M}\}.\]
Thus, $\tangentp{s(p)}X \cap \tangentp{s(p)}Y \cong \ker Ds_{p}$, via the identification through $\tangent{s}_{p}$. 

Moreover, when $Z$ is a manifold, one can check that $\tangentp{p}{Z} \subset \tangentp{p}{M}$ satisfies $Ds_{p}(\tangentp{p}{Z})=0$, or equivalently, $\tangentp{p}{Z}\subset \ker Ds_{p}$. Therefore, 
\[\tangentp{p}{Z}=\ker Ds_{p} \quad \text{if and only if}\quad  \dim \tangentp{p}{Z} = \dim \ker Ds_{p}.\]

Finally, by the rank theorem,
the map $Ds_{p}:\tangentp{p}{M}\to E_{p}$, viewed as a morphism of vector spaces, satisfies
\[\dim \ker Ds_{p} + \rank Ds_{p} = \dim \tangentp{p}{M} .\]
Combining these with the definition of clean intersection completes the proof.
\end{proof}

To prove Proposition~\ref{prop:CleanIntersection}, we use the following technical lemma.
\begin{lemma}\label{lem:Calculus}
Assume that $M=V\subset \RR^{n}$ is an open neighbourhood of $0$, and $E=V\times \RR^{m}$. Suppose that $s\in \sections{V;E}$ satisfies the following conditions:
\begin{enumerate}
\item $s(0)=0$, \label{item:1}  
\item $\rank (Ds_{0} )=r$, \label{item:2}
\item $s(x)=0$ for all $x$ in $V\cap (\{0\}^{r}\times \RR^{n-r})$. \label{item:3}
\end{enumerate}
Then there exists an open neighbourhood $U\subset V$ of $0$,
a local coordinate chart $\phi:U\to \RR^{n}$, and a local frame $\{e_{1},\cdots,e_{m}\}$ of $E$ over $U$ such that
\[s\circ \phi^{-1}(x_{1},\ldots,x_{n})=x_{1} e_{1}+\cdots +x_{r} e_{r}.\]
\end{lemma}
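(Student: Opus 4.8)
The statement is a local normal-form result, so the plan is to build the coordinate chart and the frame in two stages. First I would use hypotheses \eqref{item:1} and \eqref{item:2} to straighten out the "transverse part" of $s$, and then use \eqref{item:3} to see that nothing else survives. Concretely, write $s(x) = \sum_{j=1}^{m} s^{j}(x) \epsilon_{j}$ in the standard frame $\{\epsilon_{1},\dots,\epsilon_{m}\}$ of $E = V\times \RR^{m}$, so each $s^{j}\colon V\to \RR$ is smooth with $s^{j}(0)=0$. The matrix $Ds_{0}$ has entries $\partial s^{j}/\partial x_{k}(0)$ and rank $r$; after a constant linear change of the frame $\{\epsilon_{j}\}$ (permuting and taking linear combinations, which keeps it a frame) and a constant linear change of the coordinates $x$ (a $\mathrm{GL}_{n}$ transformation, which keeps $V$ a neighbourhood of $0$ after shrinking), I may assume $Ds_{0}$ is the block matrix $\begin{pmatrix} I_{r} & 0 \\ 0 & 0\end{pmatrix}$, i.e.\ $\partial s^{j}/\partial x_{k}(0) = \delta^{j}_{k}$ for $j,k\le r$ and $\partial s^{j}/\partial x_{k}(0)=0$ whenever $j>r$ or $k>r$. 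I should check that hypothesis \eqref{item:3} is preserved: the change of coordinates must be chosen to preserve the subspace $\{0\}^{r}\times\RR^{n-r}$ setwise, which is possible because $Ds_{0}$ annihilates the last $n-r$ coordinate directions, so the needed $\mathrm{GL}_{n}$ element can be taken block-lower-triangular.

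Next I would apply the implicit/inverse function theorem. Consider the map $\psi\colon V\to \RR^{n}$ defined by $\psi(x) = \bigl(s^{1}(x),\dots,s^{r}(x),\, x_{r+1},\dots,x_{n}\bigr)$. Its Jacobian at $0$ is, by the normalisation above, the identity matrix, so $\psi$ restricts to a diffeomorphism from some open neighbourhood $U_{0}\ni 0$ onto an open set; let $\phi = \psi$ on $U_{0}$ and let $y = \phi(x)$ be the new coordinates. In these coordinates $s^{j} = y_{j}$ for $j=1,\dots,r$ by construction. The remaining task is to show that the other components $s^{r+1},\dots,s^{m}$, rewritten as functions of $y$, can be absorbed into a change of frame so that they disappear. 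This is where hypothesis \eqref{item:3} does the work.

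For the frame change: I want a new frame $e_{1},\dots,e_{m}$ with $s = y_{1} e_{1} + \cdots + y_{r} e_{r}$, i.e.\ $e_{j} = \epsilon_{j} + \sum_{i>r} a^{i}_{j}(y)\, \epsilon_{i}$ for $j\le r$ (and $e_{i}=\epsilon_{i}$ for $i>r$ will do), provided we can write, for each $i>r$,
\[
s^{i} = \sum_{j=1}^{r} a^{i}_{j}(y)\, y_{j}
\]
with smooth coefficients $a^{i}_{j}$. By hypothesis \eqref{item:3}, $s$ vanishes on $V\cap(\{0\}^{r}\times\RR^{n-r})$; since $\phi$ was arranged to fix the last $n-r$ coordinates, in the $y$-coordinates this says each $s^{i}$ vanishes on the coordinate subspace $\{y_{1}=\cdots=y_{r}=0\}$. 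By Hadamard's lemma (Taylor with integral remainder in the variables $y_{1},\dots,y_{r}$), any smooth function vanishing on that subspace can be written as $\sum_{j=1}^{r} a^{i}_{j}(y)\, y_{j}$ with smooth $a^{i}_{j}$, which is exactly what is needed. Then $e_{1},\dots,e_{m}$ is a frame on a possibly smaller neighbourhood $U\subset U_{0}$ (it equals $\epsilon_{1},\dots,\epsilon_{m}$ up to a unipotent matrix, hence invertible everywhere), and $s\circ\phi^{-1}(y) = \sum_{j=1}^{r} y_{j}\, e_{j}$ as claimed.

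\textbf{Main obstacle.} The delicate point is bookkeeping the interplay between the three hypotheses under the normalising changes: I must make the constant coordinate change so that it both reduces $Ds_{0}$ to standard block form \emph{and} preserves the subspace $\{0\}^{r}\times\RR^{n-r}$ appearing in \eqref{item:3}, and then make sure the nonlinear chart $\phi$ also fixes the last $n-r$ coordinates so that \eqref{item:3} translates cleanly into "$s^{i}$ vanishes on $\{y_{1}=\dots=y_{r}=0\}$" — otherwise Hadamard's lemma does not apply in the form stated. Everything else (inverse function theorem, Hadamard's lemma, unipotence of the frame matrix) is routine once the subspace $\{0\}^{r}\times\RR^{n-r}$ is tracked correctly through each step.
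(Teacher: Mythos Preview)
Your proposal is correct and follows essentially the same route as the paper: build the chart $\phi=(s^{1},\dots,s^{r},x_{r+1},\dots,x_{n})$ via the inverse function theorem, then invoke Hadamard's lemma on condition~\eqref{item:3} to write each remaining component as $\sum_{j\le r} a^{i}_{j}\,y_{j}$ and absorb these into a unipotent change of frame. Your explicit tracking of why $\phi$ preserves $\{0\}^{r}\times\RR^{n-r}$ (so that \eqref{item:3} survives in the new coordinates) is a point the paper leaves implicit, and your preliminary linear normalisation of $Ds_{0}$ is harmless but not strictly needed, since \eqref{item:3} already forces the last $n-r$ columns of $Ds_{0}$ to vanish.
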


\begin{proof}
  Using the standard trivialisation of the bundle $E$, we identify 
  \[s=(s^{1},\ldots,s^{m}):V \to \RR^{m}\]
  where each $s^{i}$ is a smooth function on $V$.

  By condition~\eqref{item:1} and~\eqref{item:2},
  there exists an open neighbourhood $U\subset V$ of $0$ and a local coordinate chart 
  $\phi:U\to \RR^{n}$ such that 
  \begin{equation}\label{eq:TechLem1}
  s\circ \phi^{-1}(x_{1},\ldots,x_{n})=(x_{1},\ldots,x_{r},\widetilde{s}^{r+1}(x_{1},\ldots,x_{n}),\ldots,\widetilde{s}^{m}(x_{1},\ldots,x_{n}))  
  \end{equation}
  where each $\widetilde{s}^{i}$ for $i>r$ is a smooth function vanishing at the origin. (cf. \cite[Theorem~2.9]{MR532830}). 
  Indeed, if $y^{1},\ldots, y^{n}$ denote the coordinate functions on $V$, the map $Ds_{0}:\RR^{n}\to \RR^{m}$ is represented in coordinates by the matrix
  \[Ds_{0}=\left(\frac{\partial s_{i}}{\partial y^{j}}\right)_{ij}\]
  and by assumption~\eqref{item:2}, we may assume that the $r\times r$ submatrix
  \[\left(\frac{\partial s_{i}}{\partial y^{j}}\right)_{1\leq i,j\leq r}\]
  is invertible. 
  Define the map
  \[\phi=(s^{1},\ldots,s^{r},y^{r+1},\ldots,y^{n}): V \to \RR^{n}.\]
  By the inverse function theorem, $\phi$ is a local diffeomorphism near $0$. Moreover, Eq.~\eqref{eq:TechLem1} holds by the construction of $\phi$.
  
  Now, we may further assume that $U$ is convex. Condition~\eqref{item:3} implies, by Hadamard's lemma (see e.g. \cite[lemma~2.8]{MR4221224}), that for each $i>r$, there exist smooth functions $g^{i}_{k}$, $k=1,\ldots,r$, such that
  \begin{equation}\label{eq:TechLem2}
    \widetilde{s}^{i}(x_{1},\ldots,x_{n})=\sum_{k=1}^{r}x_{k} \cdot g^{i}_{k}(x_{1},\ldots,x_{n}).
  \end{equation}
  Now define a local frame $\{e_{1},\ldots,e_{m}\}$ for $E$ by 
\[e_{k}=\begin{cases}
  \epsilon_{k} + \sum\limits_{j=r+1}^{m} g_{k}^{j} \cdot \epsilon_{j}, & \text{if } k\leq r\\
  \epsilon_{k}, & \text{if } k>r
\end{cases}
\] 
where $\epsilon_{k}$ denotes the standard frame of $E$ with $1$ in the $k$-th position and $0$ elsewhere. Combining this with equations~\eqref{eq:TechLem1} and~\eqref{eq:TechLem2} completes the proof.
\end{proof}

Finally, we are ready to prove Proposition~\ref{prop:CleanIntersection}.
\begin{proof}[Proof of Proposition~\ref{prop:CleanIntersection}]
Assume that for each $p\in s^{-1}(0)$, the section $s$ admits the expression~\eqref{eq:SectionLocalForm} in a neighbourhood $U=U_{p}$ of $p$. Then, in such coordinates, $s^{-1}(0)\cong \RR^{n-r}$ and $\rank Ds_{p}=r$. By Lemma~\ref{lem:Rank}, $s$ and $\sigma_{0}$ intersect cleanly.

Conversely, assume that $s$ and $\sigma_{0}$ intersect cleanly. Fix a point $p\in s^{-1}(0)$. Since $s^{-1}(0)$ is a manifold, there is an open neighbourhood $U_{0} \subset s^{-1}(0)$ such that $\alpha_{0}:U_{0} \to \RR^{d}$ is a coordinate chart satisfying $\alpha_{0}(p)=0$. By Lemma~\ref{lem:Rank}, $\rank Ds_{p}=n-d$. 

By the tubular neighbourhood theorem, by shrinking $U_{0}$ if necessary, there is an open neighbourhood $U\subset M$ of $U_{0}$ such that $\alpha_{0}$ extends to an embedding $\alpha: U\to \RR^{d}\times \RR^{n-d}$ in such a way that the following diagram commutes:
\[
\begin{tikzcd}
U  \arrow{r}{\sim} & \alpha(U) \arrow[hook]{r} & \RR^{d}\times \RR^{n-d} \\
U_{0} \arrow[hook]{u} \arrow{r}{\sim} & \alpha_{0}(U_{0}) \arrow[hook]{u} \arrow[hook]{r} &  \RR^{d}\times \{0\}^{n-d} \arrow[hook]{u} .
\end{tikzcd}
\]
Moreover, by shrinking $U_{0}$ and $U$ if necessary, we also have $(s|_{U})^{-1}(0)=U_{0}$. 

Let $V=\alpha(U) \subset \RR^{n}$. Consider the section
\[s_{V}:V\cong U\xto{s} E|_{U}\cong V\times \RR^{m},\]
of the trivial bundle $V\times \RR^{m}\to V$. We aim to show that $s_{V}$ admits the expression~\eqref{eq:SectionLocalForm}.
Clearly, $s_{V}$ satisfies conditions~\eqref{item:1} and~\eqref{item:2} in Lemma~\ref{lem:Calculus}. Now, since
\[s_{V}^{-1}(0)=\alpha_{0}(U_{0})\subset V\cap (\RR^{d}\times \{0\}^{n-d}),\]
 the condition~\eqref{item:3} in Lemma~\ref{lem:Calculus} is satisfied. Hence, by Lemma~\ref{lem:Calculus}, the section $s_{V}$ admits the expression~\eqref{eq:SectionLocalForm}, and therefore so does the section $s$ in a neighbourhood of $p$. This completes the proof.
\end{proof}

\subsection{Locality of the Atiyah class in positive amplitude}
In this subsection, we focus on DG manifolds of positive amplitude and their Atiyah classes. 

Recall that the Atiyah class of a DG manifold $(\cM,Q)$ is the obstruction to the existence of an affine connection compatible with $Q$. 
In general, the existence of such connections is a global property rather than a local one.
For example, for a complex manifold $X$, the Atiyah class of the associated DG manifold $(T^{0,1}_{X}[1], \overline{\partial})$ is the obstruction to the existence of a holomorphic connection on the holomorphic tangent bundle $\mathcal{T}_{X}$; see \cite{MR3877426} and \cite{MR86359}. 
Similarly, for an integrable distribution $\cF$ on a manifold $M$, the Atiyah class of the associated DG manifold $(\cF[1], d_{\cF})$ is the obstruction to the existence of projectable connections; see \cite{MR3877426} and \cite{MR0281224}. 
In both cases, although such connections may fail to exist globally, they always exist locally.

In contrast to these examples, which arise from DG manifolds of negative amplitude, the Atiyah class of a DG manifold of positive amplitude encodes local data. We begin by introducing notation to state this more precisely.

Throughout this subsection, $(\cM,Q)$ is a DG manifold of positive amplitude with base $M$.

For each open set $U\subset M$, we denote by $(\cM_{U},Q_{U})$ the DG manifold obtained by restricting $(\cM,Q)$ to $U$. That is, the graded algebra of functions is 
\[\smooth{\cM_{U}}\cong \smooth{U}\tensor_{\smooth{M}}\smooth{\cM},\]
and under this identification, the homological vector field $Q_{U}$ is defined by
 \begin{equation}\label{eq:QU}
  Q_{U}=\id\tensor Q: \smooth{U}\tensor_{\smooth{M}}\smooth{\cM} \to \smooth{U}\tensor_{\smooth{M}}\smooth{\cM}.
 \end{equation}
Indeed, $Q_{U}$ is a derivation on $\smooth{\cM_{U}}$ since, for all $f\in \smooth{M}$, we have $Q(f)=0$ due to degree considerations.

\begin{proposition}\label{prop:Atiyah}
Let $(\cM,Q)$ be a DG manifold of positive amplitude and let $\{U_{i}\}_{i\in I}$ be an open cover of the base manifold $M$. 
Then the Atiyah class $\alpha_{(\cM,Q)}$ of $(\cM,Q)$ is completely determined by the collection $\{\alpha_{i}\}_{i\in I}$, where $\alpha_{i}$ denotes the Atiyah class of $(\cM_{U_{i}},Q_{U_{i}})$. In particular, $\alpha_{(\cM,Q)}=0$ if and only if $\alpha_{i}=0$ for all $i\in I$.
\end{proposition}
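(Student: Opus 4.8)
The plan is to exploit the fact that, for a DG manifold $(\cM,Q)$ of positive amplitude, the homological vector field $Q$ is $\smooth{M}$-linear (it annihilates all base functions for degree reasons), so that the complex $\bigl(\sections{\cotangent{\cM}\tensor\End(\tangent{\cM})}^{\bullet},\liederivative{Q}\bigr)$ is a complex of \emph{sheaves of $\cO_{M}$-modules}. One then runs the usual partition-of-unity argument to reconstruct a global primitive from local ones. Concretely: suppose $\alpha_{i}=0$ for every $i\in I$. Fix a torsion-free affine connection $\nabla$ on $\cM$ (these exist by the discussion before Proposition~\ref{prop:AtiyahProperty}) and let $\atiyahcocycle{\nabla}{(\cM,Q)}$ be the corresponding Atiyah cocycle. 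For each $i$, the restriction $\atiyahcocycle{\nabla}{(\cM,Q)}\big|_{U_{i}}$ is the Atiyah cocycle of $(\cM_{U_{i}},Q_{U_{i}})$ associated with $\nabla\big|_{U_{i}}$, and since $\alpha_{i}=0$ there is a degree $0$ tensor $\beta_{i}\in\sections{U_{i};\cotangent{\cM}\tensor\End(\tangent{\cM})}$ with $\liederivative{Q_{U_{i}}}\beta_{i}=\atiyahcocycle{\nabla}{(\cM,Q)}\big|_{U_{i}}$. Choose a partition of unity $\{\rho_{i}\}$ subordinate to $\{U_{i}\}$ (functions on $M$, hence degree $0$ functions on $\cM$ annihilated by $Q$), and set $\beta=\sum_{i}\rho_{i}\beta_{i}\in\sections{\cotangent{\cM}\tensor\End(\tangent{\cM})}$. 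Because $Q(\rho_{i})=0$, the Leibniz rule for $\liederivative{Q}$ gives $\liederivative{Q}\beta=\sum_{i}\rho_{i}\,\liederivative{Q_{U_{i}}}\beta_{i}=\sum_{i}\rho_{i}\,\atiyahcocycle{\nabla}{(\cM,Q)}\big|_{U_{i}}=\atiyahcocycle{\nabla}{(\cM,Q)}$, so $[\atiyahcocycle{\nabla}{(\cM,Q)}]=0$, i.e. $\alpha_{(\cM,Q)}=0$. The converse direction is immediate: if $\alpha_{(\cM,Q)}=0$, restrict a global primitive to each $U_{i}$ to see $\alpha_{i}=0$.

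The first point to pin down carefully is the compatibility of restriction with the Atiyah cocycle: that $Q_{U}$ as defined in \eqref{eq:QU} is indeed the homological vector field of $(\cM_{U},Q_{U})$, that an affine connection on $\cM$ restricts to one on $\cM_{U}$, and that $\atiyahcocycle{\nabla\big|_{U}}{(\cM_{U},Q_{U})}=\atiyahcocycle{\nabla}{(\cM,Q)}\big|_{U}$ — all of which follow directly from the local nature of the defining formula $\atiyahcocycle{\nabla}{(\cM,Q)}(X,Y)=[Q,\nabla_{X}Y]-\nabla_{[Q,X]}Y-(-1)^{\degree X}\nabla_{X}[Q,Y]$ and the fact that $[Q,-]$, $\nabla$, and the bracket are all local differential operators. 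The second point is the more precise claim that the global class is "completely determined by" the collection $\{\alpha_{i}\}$; the cleanest way to phrase this is that the assignment $U\mapsto H^{1}\bigl(\sections{U;\cotangent{\cM}\tensor\End(\tangent{\cM})}^{\bullet},\liederivative{Q_{U}}\bigr)$ is (the global sections of) a sheaf, and $\alpha_{(\cM,Q)}$ is a global section whose stalk — or restriction to a cover — recovers the $\alpha_{i}$; the partition-of-unity computation above is exactly what shows that this degree $1$ cohomology sheaf has vanishing global sections iff it is locally zero, which is the substantive content.

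The main obstacle — though it is more a matter of care than of difficulty — is ensuring the partition of unity interacts correctly with the grading and with $\liederivative{Q}$: one must use that $\rho_{i}\in\smooth{M}\subset\smooth{\cM}$ sits in degree $0$ and satisfies $Q(\rho_{i})=0$ (this is precisely where positive amplitude is used, and where the negative-amplitude case such as $(T^{0,1}_{X}[1],\overline{\partial})$ breaks down, since there $\overline{\partial}$ does not kill base functions), so that multiplication by $\rho_{i}$ is a cochain map and $\liederivative{Q}(\rho_{i}\beta_{i})=\rho_{i}\liederivative{Q}\beta_{i}$ with no correction term. Once that is in place, the sum $\sum_i \rho_i \beta_i$ is well-defined as a global tensor (local finiteness of the cover), and the argument closes. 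I would also remark that the same reasoning shows more generally that any obstruction class living in the first cohomology of a complex of fine sheaves of $\cO_M$-modules is detected on any open cover, so the statement is really an instance of a general principle specialised to the Atiyah complex.
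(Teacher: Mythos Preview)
Your proposal is correct and follows essentially the same approach as the paper: both arguments rest on the observation that $Q$ annihilates base functions (positive amplitude), so multiplication by a partition of unity $\rho_{i}\in\smooth{M}$ commutes with $\liederivative{Q}$, and hence a global primitive can be assembled from local ones via $\beta=\sum_{i}\rho_{i}\beta_{i}$. The only organizational difference is that the paper first isolates a general lemma---that for any sheaf of DG $\cO_{M}$-modules the cohomology presheaf is already a sheaf (checking both locality and gluing via the same partition-of-unity trick)---and then deduces both the ``completely determined'' clause (from gluing) and the vanishing clause (from locality) at once, whereas you carry out the partition-of-unity computation directly on the Atiyah cocycle and treat the sheaf interpretation as a remark; the compatibility check $\atiyahcocycle{\nabla|_{U}}{(\cM_{U},Q_{U})}=\atiyahcocycle{\nabla}{(\cM,Q)}|_{U}$ is handled identically in both.
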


We employ the language of sheaves to prove Proposition~\ref{prop:Atiyah}, and begin by recalling the relevant notation.

Let $\cO_{M}$ denote the sheaf of algebras of smooth functions on $M$, and let $(\cA,\cQ)$ denote the sheaf of DG $\KK$-algebras associated with $(\cM,Q)$.
Then we have $(\smooth{\cM_{U}},Q_{U})=(\cA(U), \cQ(U))$ for each open subset $U\subset M$.
Since $\cM$ is of positive amplitude, $\cQ$ is $\cO_{M}$-linear. 

Let $(\cE,Q_{\cE})$ denote the sheaf of DG $(\cA,\cQ)$-modules corresponding to a DG vector bundle, again denoted by $(\cE,Q_{\cE})$ by abuse of notation. 
Note that $(\cA,\cQ)$ is a sheaf of DG $\cO_{M}$-modules, and hence so is $(\cE,Q_{\cE})$. 
By a sheaf of DG $\cO_{M}$-modules $(\cE,Q_{\cE})$, we mean a cochain complex of sheaves of $\cO_{M}$-modules:
\[\cdots \to \cE^{n-1} \xto{Q_{\cE}} \cE^{n} \xto{Q_{\cE}}\cE^{n+1}\to \cdots,\]
where $\cE^{n}$ denotes the degree $n$ component of $\cE$, and the differential $Q_{\cE}$ is $\cO_{M}$-linear. 
We denote by $Q_{\cE}(U)$ the differential induced on the DG $\cO_{M}(U)$-module $\cE(U)$. 
Observe that, when $(\cE,Q_{\cE})$ arises from a DG vector bundle, we have
\begin{equation} \label{eq:cEU}
  \cE(U) \cong \cA(U)\tensor_{\cA(M)}\cE(M), \qquad Q_{\cE}(U) = \cQ(U)\tensor \id + \id \tensor Q_{\cE}(M).
\end{equation}

The following lemma may be known, but we could not find a reference. Note that the sheaf $(\cE,Q_{\cE})$ of DG $\cO_{M}$-modules appearing in the lemma below does not necessarily arise from a DG vector bundle.
\begin{lemma}
  Let $\cO_{M}$ denote the sheaf of algebras of smooth functions on $M$, and let $(\cE,Q_{\cE})$ be a sheaf of DG $\cO_{M}$-modules. Then the cohomology presheaf 
  \[H^{\bullet}(\cE,Q_{\cE}): U\mapsto H^{\bullet}(\cE(U),Q_{\cE}(U)):=\frac{\ker (Q_{\cE}(U))}{\image (Q_{\cE}(U))} \]
   is, in fact, a sheaf of graded $\cO_{M}$-modules.
\end{lemma}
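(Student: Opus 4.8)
The plan is to show that the presheaf $H^{\bullet}(\cE, Q_{\cE})$ satisfies the two sheaf axioms — the identity axiom (separatedness) and the gluing axiom — using the fact that $\cO_M$ is a fine sheaf, so that partitions of unity are available on the paracompact manifold $M$. The key point throughout is that $Q_{\cE}$ is $\cO_M$-linear, so multiplication by a smooth function commutes with the differential and hence descends to multiplication on cohomology; this is what makes all the gluing arguments work.

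First, for separatedness: suppose $[\sigma] \in H^{\bullet}(\cE(U), Q_{\cE}(U))$ restricts to zero on every member of an open cover $\{U_i\}$ of $U$. This means $\sigma|_{U_i} = Q_{\cE}(U_i)(\tau_i)$ for some $\tau_i \in \cE(U_i)$ of the appropriate degree. Choose a partition of unity $\{\rho_i\}$ subordinate to $\{U_i\}$ (refining if necessary so that it is locally finite), and set $\tau := \sum_i \rho_i \tau_i \in \cE(U)$, where $\rho_i \tau_i$ is understood to be extended by zero outside $U_i$ — this is a well-defined local section of the sheaf $\cE$ because $\cE$ is a sheaf of $\cO_M$-modules and the sum is locally finite. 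Then $Q_{\cE}(U)(\tau) = \sum_i \rho_i\, Q_{\cE}(U_i)(\tau_i) = \sum_i \rho_i\, \sigma|_{U_i} = \sigma$, using $\cO_M$-linearity of $Q_{\cE}$ and $\sum_i \rho_i = 1$. Hence $[\sigma] = 0$ in $H^{\bullet}(\cE(U), Q_{\cE}(U))$, which is the separatedness axiom. (The map $H^{\bullet}(\cE, Q_{\cE})(U) \to \prod_i H^{\bullet}(\cE, Q_{\cE})(U_i)$ is injective.)

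Next, for gluing: suppose we are given classes $[\sigma_i] \in H^{\bullet}(\cE(U_i), Q_{\cE}(U_i))$ that agree on overlaps, i.e.\ $\sigma_i|_{U_i \cap U_j} - \sigma_j|_{U_i \cap U_j} = Q_{\cE}(U_i \cap U_j)(\tau_{ij})$ for some $\tau_{ij}$. The $\sigma_i$ are $Q_{\cE}$-cocycles but need not agree as sections, only up to coboundary. Choose a partition of unity $\{\rho_j\}$ subordinate to $\{U_j\}$ and define $\sigma := \sum_j \rho_j \sigma_j \in \cE(U)$ (extending $\rho_j \sigma_j$ by zero, locally finite sum as before). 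Since each $\sigma_j$ is a cocycle and $Q_{\cE}$ is $\cO_M$-linear, $Q_{\cE}(U)(\sigma) = \sum_j \rho_j\, Q_{\cE}(\sigma_j) = 0$, so $\sigma$ is a cocycle on $U$. It remains to check $\sigma|_{U_i} - \sigma_i$ is a coboundary on $U_i$: on $U_i$ we compute
\[
\sigma|_{U_i} - \sigma_i = \sum_j \rho_j(\sigma_j|_{U_i} - \sigma_i|_{U_i}) = \sum_j \rho_j\, Q_{\cE}(\tau_{ji}|_{U_i \cap U_j}) = Q_{\cE}(U_i)\Big(\sum_j \rho_j\, \tau_{ji}\Big),
\]
where again each $\rho_j \tau_{ji}$ is extended by zero to $U_i$ and the sum is locally finite, giving a well-defined section of $\cE$ over $U_i$. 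Thus $[\sigma]|_{U_i} = [\sigma_i]$ for all $i$, establishing existence of a gluing; uniqueness follows from the separatedness already proved. Together these show $H^{\bullet}(\cE, Q_{\cE})$ is a sheaf, and it is manifestly a sheaf of graded $\cO_M$-modules since each $H^{\bullet}(\cE(U), Q_{\cE}(U))$ is a graded $\cO_M(U)$-module (scalar multiplication descends to cohomology by $\cO_M$-linearity of $Q_{\cE}$) and restriction maps are $\cO_M$-linear.

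The main obstacle — really the only subtlety — is making sure that all the "extension by zero and sum" manipulations are legitimate as operations on sections of a sheaf of $\cO_M$-modules: one must invoke local finiteness of the partition of unity (after passing to a locally finite refinement, possible since $M$ is paracompact) so that the infinite sums are actually finite in a neighbourhood of each point and hence glue to genuine sections via the sheaf axioms for $\cE$ itself. Once that bookkeeping is in place, the $\cO_M$-linearity of $Q_{\cE}$ does all the real work. This is precisely the step where the positive-amplitude hypothesis enters, via the observation (made in the excerpt) that $\cQ$, and hence $Q_{\cE}$, is $\cO_M$-linear.
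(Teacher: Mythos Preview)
Your proof is correct and follows essentially the same approach as the paper's: both exploit the $\cO_M$-linearity of $Q_{\cE}$ together with a partition of unity subordinate to a locally finite refinement to verify the locality and gluing axioms directly. The only cosmetic difference is that the paper packages the separatedness argument via a retraction $G\circ F=\id$ of cochain maps (and then reuses injectivity of $F$ to deduce that the glued section is closed), whereas you verify each axiom by a direct computation; your treatment of closedness of the glued cocycle is in fact slightly more transparent.
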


\begin{proof}
  To simplify the notation, we write $\underline{\cE}:=(\cE,Q_{\cE})$ and $\underline{H}:=H(\cE,Q_{\cE})$. We often write $\underline{\cE}^{\bullet}$ and $\underline{H}^{\bullet}$ to emphasise the grading.

  To show that the presheaf $\underline{H}$ is a sheaf, we need to verify the locality and gluing axioms. Since every open cover of $M$ admits a locally finite refinement, it suffices to work with such a cover. 
  Throughout the proof, let $\{U_{i}\}_{i\in I}$ denote a locally finite open cover of $M$, and let $\{\rho_{i}\}_{i\in I}$ denote the partition of unity subordinate to this cover.

  Consider the cochain complex denoted by the symbol $\underline{\cE}^{\bullet}(U_{I})$, defined as the direct product of the family of cochain complexes $\underline{\cE}^{\bullet}(U_{i})$ for $i\in I$: 
  \[ \underline{\cE}^{\bullet}(U_{I}) := \prod_{i\in I} \underline{\cE}^{\bullet}(U_{i}).\]
  The restriction maps $\underline{\cE}(M)\to \underline{\cE}(U_{i})$ for each $i\in I$ induce a cochain map
  \[ F: \underline{\cE}^{\bullet}(M) \to \underline{\cE}^{\bullet}(U_{I}). \]
  The induced map on cohomology is
  \[F_{\ast}: \underline{H}^{\bullet}(M) \to H^{\bullet}(U_{I})\cong \prod_{i\in I} H^{\bullet}(U_{i}).\]
  To verify the locality axiom, it suffices to show that $F_{\ast}$ is injective. 

  Define a cochain map $G: \underline{\cE}^{\bullet}(U_{I})\to \underline{\cE}^{\bullet}(M)$ by 
  \[\{s_{i}\}_{i\in I} \mapsto \sum_{i\in I} \rho_{i}\cdot s_{i}\] 
  where $s_{i}\in \cE(U_{i})$. Indeed, $G$ is a cochain map since 
  \[Q_{\cE}(M)\Big(\sum_{i\in I}\rho_{i}\cdot s_{i}\Big) = \sum_{i\in I} Q_{\cE}(M)(\rho_{i}\cdot s_{i}) = \sum_{i\in I} \rho_{i}\cdot Q_{\cE}(U_{i})(s_{i})\]
  where the first equality holds since, when restricted to each $U_{i}$, the sum is finite, and we may invoke the locality of the sheaf $\cE$.

  It is clear that $G\circ F=\id_{\underline{\cE}(M)}$, hence $G_{\ast}\circ F_{\ast}=\id_{\underline{H}(M)}$. Therefore, $F_{\ast}$ is injective.

  It remains to verify the gluing axiom. Consider an element $\{t_{i}\}\in \underline{H}(U_{I})$ such that, for all $i,j \in I$,
  \begin{equation}\label{eq:SheafCondition2}
  t_{i}|_{U_{ij}} = t_{j}|_{U_{ij}}
  \end{equation}
  where $U_{ij}:=U_{i}\cap U_{j}$. To prove the gluing axiom, it suffices to show that there exists $t\in \underline{H}(M)$ such that $F_{\ast}(t)=\{t_{i}\}$.

  For each $i\in I$, let $s_{i}\in \ker Q_{\cE}(U_{i})$ be a representative of $t_{i}$. Then Eq.~\eqref{eq:SheafCondition2} implies that 
  \[s_{i}|_{U_{ij}}=s_{j}|_{U_{ij}} + Q_{\cE}(U_{ij})(\eta_{ij})\]
  for some $\eta_{ij}\in \underline{\cE}(U_{ij})$. Now consider the section
  \[s=\sum_{i\in I} \rho_{i}\cdot s_{i}\in \underline{\cE}(M).\] 
  For each $j\in I$, we compute
  \begin{equation*}
    s|_{U_{j}} = \sum_{i\in I} \rho_{i}|_{U_{j}}\cdot s_{i}|_{U_{ij}}
    = \sum_{i\in I} \rho_{i}|_{U_{j}}\cdot (s_{j}|_{U_{ij}} + Q_{\cE}(U_{ij})(\eta_{ij}))
    = s_{j} + Q_{\cE}(U_{j})(\sum_{i\in I}\rho_{i}|_{U_{j}}\cdot \eta_{ij}).
  \end{equation*} 
  In other words,
  \[F(s)\in \{s_{i}\} + \image Q_{\cE}(U_{I}).\] 
  Recall that $F$ is injective by $G\circ F=\id_{\underline{\cE}(M)}$. This implies that $s\in \ker Q_{\cE}(M)$,
  and we define $t:=[s] \in \underline{H}(M)$.
  Then $F_{\ast}(t)=\{t_{i}\}$ establishes the gluing property. This completes the proof.
\end{proof}

When $(\cE,Q_{\cE})$ arises from a DG vector bundle, we have the following corollary. 
Note that the symbol $(\sections{\cM_{U}; \cE}, (Q_{\cE})_{U})$ in the corollary below denotes the DG $(\smooth{\cM_{U}},Q_{U})$-module of sections of the restricted DG vector bundle $(\cE|_{\cM_{U}},(Q_{\cE})_{U})\to (\cM_{U},Q_{U})$. In terms of sheaves, it corresponds to $(\cE(U), Q_{\cE}(U))$ in Eq.~\eqref{eq:cEU}. 

\begin{corollary}\label{cor:VBSheaf}
Let $(\cM,Q)$ be a DG manifold of positive amplitude with base manifold $M$. Given a DG vector bundle $(\cE,Q_{\cE})$, the presheaf defined as the cohomology of the DG module of sections
\[U\mapsto H^{\bullet}(\sections{\cM_{U}; \cE}, (Q_{\cE})_{U})\]
is a sheaf of graded $\cO_{M}$-modules.
\end{corollary}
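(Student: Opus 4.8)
The plan is to deduce Corollary~\ref{cor:VBSheaf} directly from the preceding Lemma by identifying the DG module of sections of the restricted DG vector bundle with the section sheaf of a sheaf of DG $\cO_{M}$-modules. First I would observe that, since $(\cM,Q)$ is of positive amplitude, the homological vector field $Q$ is $\cO_{M}$-linear (as noted in the text, $Q(f)=0$ for $f\in\smooth{M}$ by degree reasons), and therefore the tangent lift $Q_{\cE}$ on $\sections{\cE}$ is $\cO_{M}$-linear as well — this is exactly the structure recorded in Eq.~\eqref{eq:cEU}, where $Q_{\cE}(U)=\cQ(U)\tensor\id+\id\tensor Q_{\cE}(M)$ acts $\cO_{M}(U)$-linearly because $\cQ$ does.

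Next I would make the sheaf-theoretic identification precise. Let $(\cA,\cQ)$ be the sheaf of DG $\KK$-algebras associated to $(\cM,Q)$ and let $(\cE,Q_{\cE})$ also denote the sheaf of DG $(\cA,\cQ)$-modules corresponding to the given DG vector bundle, as set up in the paragraphs preceding the Lemma. Viewing $(\cE,Q_{\cE})$ as a sheaf of DG $\cO_{M}$-modules via restriction of scalars along $\cO_{M}\into\cA$, Eq.~\eqref{eq:cEU} shows that for every open $U\subset M$ one has a canonical isomorphism of DG $\cO_{M}(U)$-modules
\[
\big(\sections{\cM_{U};\cE},(Q_{\cE})_{U}\big)\;\cong\;\big(\cE(U),Q_{\cE}(U)\big),
\]
compatible with restriction maps (both sides are $\cA(U)\tensor_{\cA(M)}\cE(M)$ with the same differential). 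Hence the presheaf $U\mapsto H^{\bullet}(\sections{\cM_{U};\cE},(Q_{\cE})_{U})$ coincides, as a presheaf of graded $\cO_{M}$-modules, with the cohomology presheaf $H^{\bullet}(\cE,Q_{\cE})$ appearing in the Lemma.

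Finally I would apply the Lemma verbatim: $(\cE,Q_{\cE})$ is a sheaf of DG $\cO_{M}$-modules (it is even quasi-coherent, but only the $\cO_{M}$-module structure is needed), so its cohomology presheaf is a sheaf of graded $\cO_{M}$-modules, and by the identification above so is $U\mapsto H^{\bullet}(\sections{\cM_{U};\cE},(Q_{\cE})_{U})$. This completes the proof. The only point requiring genuine care is checking that the isomorphism in the displayed equation is natural in $U$ and carries the differential $(Q_{\cE})_{U}$ to $Q_{\cE}(U)$; I expect this to be the main (though routine) obstacle, since it amounts to unwinding the definitions of restriction of a DG vector bundle and of the $\cO_{M}$-linearity of $\cQ$ in positive amplitude — everything else is an immediate invocation of the Lemma.
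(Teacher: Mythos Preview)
Your proposal is correct and is essentially the paper's own argument: the corollary is stated without proof, immediately after the Lemma, with the note that $(\sections{\cM_{U};\cE},(Q_{\cE})_{U})$ corresponds to $(\cE(U),Q_{\cE}(U))$ via Eq.~\eqref{eq:cEU}. Your unpacking of the $\cO_{M}$-linearity and the identification of the two presheaves is exactly what the paper leaves implicit.
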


Before proving Proposition~\ref{prop:Atiyah}, consider the tangent bundle $(\tangent{\cM},\liederivative{Q})$. 
As in classical differential geometry, for each open subset $U\subset M$, the inclusion of graded manifolds $\cM_{U}\into \cM$ yields an algebra homomorphism
\[\smooth{\cM}\to \smooth{\cM_{U}}\cong \smooth{U}\tensor_{\smooth{M}}\smooth{\cM}\] 
defined by the assignment
\[f\mapsto f|_{U}=1\tensor f.\] 
This induces a corresponding restriction map on vector fields:
\begin{equation}\label{eq:XXU}
  \XX(\cM) \to \XX(\cM_{U}), \quad X\mapsto X_{U}
\end{equation}
satisfying
\[ X_{U}(f|_{U})=X(f)|_{U}.\]

\begin{lemma}\label{lem:XXU}
The map~\eqref{eq:XXU} is well-defined. Moreover, it induces a morphism of DG Lie algebras:
\[(\XX(\cM), \liederivative{Q}, [\argument,\argument]) \to (\XX(\cM_{U}), \liederivative{Q_{U}}, [\argument,\argument])\]
where the Lie algebra structures are given by the graded commutator brackets.
\end{lemma}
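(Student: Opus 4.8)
The plan is to verify directly that the restriction map on vector fields is well-defined, and then check compatibility with the brackets and with the homological vector fields by a local computation, exploiting that $\cM$ has positive amplitude.

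First I would make the restriction map precise. A vector field $X \in \XX(\cM) = \Der(\smooth{\cM})$ is a graded derivation; I want to produce $X_U \in \Der(\smooth{\cM_U})$ with $X_U(f|_U) = X(f)|_U$. Since $\smooth{\cM_U} \cong \smooth{U} \tensor_{\smooth{M}} \smooth{\cM}$, it suffices to define $X_U$ on elements of the form $g \tensor f$ with $g \in \smooth{U}$ and $f \in \smooth{\cM}$, by the Leibniz rule $X_U(g\tensor f) = (-1)^{\degree X \degree g}\,g\cdot X(f)|_U$ (using that $X$ has to act as a derivation and that any purely base function $g$ must be $X$-differentiated using the local structure of $X$). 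The only subtlety is well-definedness over the tensor product, i.e.\ compatibility with the $\smooth{M}$-bilinearity relation $gh \tensor f \sim g \tensor hf$ for $h \in \smooth{M}$; this follows because $X(hf) = X(h)f + (-1)^{\degree X \degree h} h X(f)$ and restricting to $U$ reproduces the required identity once one notes $g\cdot X(h)|_U = (g\, h|_U$-part$)$ and that $\smooth{U}$ is an $\smooth{M}$-algebra. Concretely one checks this on a trivialising chart where $\smooth{\cM}$ is generated over $\smooth{M}$ by finitely many odd/graded coordinate functions $\xi^a$, writing $X = \sum a^j \partial_{x^j} + \sum b^a \partial_{\xi^a}$ with $a^j, b^a \in \smooth{\cM}$, and observing that $X_U$ is just the operator with the same formula but coefficients restricted to $U$; this manifestly lies in $\Der(\smooth{\cM_U})$ and is independent of the chart because its defining property $X_U(f|_U) = X(f)|_U$ on restrictions of global functions, together with the derivation property, pins it down uniquely on $\smooth{\cM_U}$ (restrictions of global functions generate $\smooth{\cM_U}$ as an $\smooth{U}$-algebra, and derivations are determined by their values on algebra generators).

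Next I would check that $X \mapsto X_U$ is a Lie algebra homomorphism: $[X,Y]_U = [X_U, Y_U]$. Both sides are derivations of $\smooth{\cM_U}$, so it is enough to compare their values on $f|_U$ for $f \in \smooth{\cM}$. On one hand $[X,Y]_U(f|_U) = [X,Y](f)|_U = (X(Y(f)) - (-1)^{\degree X \degree Y} Y(X(f)))|_U$. On the other hand $[X_U, Y_U](f|_U) = X_U(Y_U(f|_U)) - (-1)^{\degree X \degree Y} Y_U(X_U(f|_U)) = X_U(Y(f)|_U) - (\cdots) = X(Y(f))|_U - (-1)^{\degree X \degree Y} Y(X(f))|_U$, using the defining property twice. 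These agree, and since derivations of $\smooth{\cM_U}$ are determined by their action on the $\smooth{U}$-algebra generators $\{f|_U : f \in \smooth{\cM}\}$, the two derivations coincide. The same two-step argument (reduce to action on $f|_U$, use the defining property) shows $(\liederivative{Q} X)_U = \liederivative{Q_U}(X_U)$, i.e.\ $[Q, X]_U = [Q_U, X_U]$: indeed $[Q,X]_U(f|_U) = [Q,X](f)|_U$, while $[Q_U, X_U](f|_U) = Q_U(X(f)|_U) - (-1)^{\degree X} X_U(Q(f)|_U)$, and here I would invoke that $Q_U = \id \tensor Q$ acts on $\smooth{U}\tensor_{\smooth{M}}\smooth{\cM}$ with $Q_U(g \tensor f) = (-1)^{\degree g} g \tensor Q(f)$ (legitimate precisely because $Q$ is $\smooth{M}$-linear in positive amplitude, Eq.~\eqref{eq:QU}), so $Q_U(h|_U) = Q(h)|_U$ for all $h \in \smooth{\cM}$; substituting gives $Q(X(f))|_U - (-1)^{\degree X} X(Q(f))|_U = [Q,X](f)|_U$, matching. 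Thus the restriction map intertwines both the differential $\liederivative{Q}$ and the bracket, so it is a morphism of DG Lie algebras.

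I do not expect a serious obstacle here; the statement is essentially the graded-geometric analogue of the classical fact that restriction of vector fields to an open set is a Lie algebra map, with the extra input being the $\smooth{M}$-linearity of $Q$ that makes $Q_U = \id \tensor Q$ well-behaved. The one place requiring a little care is the well-definedness of $X_U$ over the relative tensor product $\smooth{U}\tensor_{\smooth{M}}\smooth{\cM}$ — one must confirm the proposed formula respects the $\smooth{M}$-balancing relation — but this is a direct check using the derivation property of $X$ and, once done on a trivialising chart, transfers globally by the uniqueness of derivations determined on generators. All subsequent identities then reduce, by the same uniqueness principle, to equalities of restrictions of global functions, which are immediate.
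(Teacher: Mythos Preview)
Your overall strategy matches the paper's: verify well-definedness of $X\mapsto X_U$, then check that restriction commutes with the bracket and with $\liederivative{Q}$ by evaluating on elements $f|_U$ and using $Q_U(h|_U)=Q(h)|_U$. The bracket and differential parts of your argument are essentially identical to the paper's proof.

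There is, however, a genuine slip in your well-definedness discussion. The displayed formula
\[
X_U(g\tensor f) = (-1)^{\degree X \degree g}\,g\cdot X(f)|_U
\]
is wrong for a general $X$: it omits the term in which $X$ differentiates the base function $g\in\smooth{U}$. Indeed, for $h\in\smooth{M}$ your own bilinearity check gives
\[
X_U(g\tensor hf)=g\cdot X(h)|_U\cdot f|_U+g\,h|_U\cdot X(f)|_U,
\]
which does \emph{not} agree with $X_U(gh\tensor f)=(gh)\cdot X(f)|_U$ unless $X(h)=0$. So the proposed formula does not descend to the balanced tensor product. Your parenthetical remark and the unfinished clause ``$g\cdot X(h)|_U=(g\,h|_U$-part$)$'' suggest you sensed this, but the formula as written cannot be salvaged.

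The correct route is the one the paper takes (and which you also sketch as a fallback): derivations are local operators, so $X(f)|_U$ depends only on $f|_U$; hence the rule $X_U(f|_U):=X(f)|_U$ is unambiguous on restrictions of global functions, and extends uniquely to a derivation of $\smooth{\cM_U}$ because these restrictions, together with $\smooth{U}$, generate $\smooth{\cM_U}$ locally (any local linear function extends to a global one). Your coordinate description ``$X_U$ has the same formula with coefficients restricted to $U$'' is exactly this, and is correct; just drop the tensor-formula attempt and lead with the locality-of-derivations argument.
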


\begin{proof}
  Note that derivations are local operators: if $f|_{U}=g|_{U}$ then $X(f)|_{U}=X(g)|_{U}$ for any $X\in \XX(\cM)$. 
  Moreover, vector fields on $\cM_{U}$ are determined by their action on linear functions in $\smooth{\cM_{V}}$ for sufficiently small open subsets $V\subset U$. Since any such function extends to a global function on $\cM_{V}$, the restriction map~\eqref{eq:XXU} is well-defined. 
  
  It is standard that $(\XX(\cM),\liederivative{Q}, [\argument,\argument])$ and $(\XX(\cM_{U}),\liederivative{Q_{U}}, [\argument,\argument])$ are DG Lie algebras.

  To check that the map preserves brackets, let $X,Y\in \XX(\cM)$ be homogeneous. Then for any $f\in \smooth{\cM}$,
  \begin{multline*}
    [X,Y](f)|_{U}=XY(f)|_{U}-(-1)^{\degree{X}\degree{Y}} YX(f)|_{U}\\
    =X_{U}(Y(f)|_{U})-(-1)^{\degree{X}\degree{Y}} Y_{U}(X(f)|_{U}) = [X_{U},Y_{U}](f|_{U}).\end{multline*}
    Hence, $[X,Y]_{U}=[X_{U},Y_{U}]$, and the map preserves the Lie bracket.

  Finally, the homological vector field $Q_{U}$ in Eq.~\eqref{eq:QU} satisfies 
  \[Q_{U}(f|_{U})=(1\tensor Q)(1\tensor f)=1\tensor (Q(f))=Q(f)|_{U},\]
  so $Q_{U}$ defined via Eq.~\eqref{eq:QU} agrees with the restriction of $Q$ as a vector field. Therefore, the differential is preserved: 
  \[ \liederivative{Q}(X)_{U}=\liederivative{Q_{U}}(X_{U}).\]
  This shows that the restriction map~\eqref{eq:XXU} defines a morphism of DG Lie algebras.
\end{proof}

Now we are ready to prove Proposition~\ref{prop:Atiyah}.
\begin{proof}[Proof of Proposition~\ref{prop:Atiyah}]
  Let $\nabla$ be an affine connection on $\cM$. Given an open subset $U\subset M$, let $\nabla^{U}$ be the pullback connection on $\cM_{U}$ along the inclusion $\cM_{U}\into \cM$. That is, $\nabla^{U}:\XX(\cM_{U})\times \XX(\cM_{U})\to \XX(\cM_{U})$ is defined by
  \[\nabla^{U}_{X_{U}} Y_{U} := (\nabla_{X}Y)_{U}\]
  for $X,Y\in \XX(\cM)$. The well-definedness of $\nabla^{U}$ follows from the same reasoning as in Lemma~\ref{lem:XXU}. 

  Now, for homogeneous $X,Y\in \XX(\cM)$, we compute:
\begin{multline*}
  \big(\At^{\nabla}(X,Y)\big)_{U} = [Q,\nabla_{X}Y]_{U} - (\nabla_{[Q,X]}Y)_{U} -(-1)^{\degree{X}} (\nabla_{X}[Q,Y])_{U}\\
  =[Q_{U},(\nabla^{U}_{X_{U}}Y_{U})]-\nabla^{U}_{[Q_{U},X_{U}]}Y_{U}-(-1)^{\degree{X_{U}}}\nabla^{U}_{X_{U}}[Q_{U},Y_{U}]
  =\At^{\nabla^{U}}(X_{U}, Y_{U}).
\end{multline*}
 
  Let $(\cE,Q_{\cE})=(\cotangent{\cM}\tensor \End(\tangent{\cM}),\liederivative{Q})$.
  The computation above shows that the restriction map
  \[H^{\bullet}(\sections{\cM;\cE},Q_{\cE})\to H^{\bullet}(\sections{\cM_{U}; \cE}, (Q_{\cE})_{U})\]
   sends the Atiyah class $\alpha_{(\cM,Q)}$ of $(\cM,Q)$ to the Atiyah class $\alpha_{(\cM_{U},Q_{U})}$ of $(\cM_{U},Q_{U})$. 
   
   By the gluing axiom, Corollary~\ref{cor:VBSheaf} implies that if $\{U_{i}\}$ is an open cover of $M$, then $\alpha_{(\cM_{U_{i}},Q_{U_{i}})}$ completely determines $\alpha_{(\cM,Q)}$. Moreover, by the locality axiom, $\alpha_{(\cM,Q)}=0$ if and only if $\alpha_{(\cM_{U_{i}},Q_{U_{i}})}=0$ for all $i$. 
   
   This completes the proof of Proposition~\ref{prop:Atiyah}. 
\end{proof}

\subsection{The Atiyah class of DG manifolds of amplitude $+1$}\label{subsec:AtiyahClass}

In this subsection, we focus on DG manifolds of amplitude $+1$ and give an explicit computation of their Atiyah classes. 

For the remainder of this section, $\pi:E\to M$ is a vector bundle over $M$, and $s:M\to E$ is a section. Also, we always assume that $(\cM,Q)=(E[-1],\iota_{s})$.

\subsubsection{Vector fields and affine connections}
Recall that for a vector bundle $\pi:E\to M$, the $\smooth{E}$-module of vector fields $\XX(E)$ on the total space $E$ fits into the short exact sequence
\[0\to \sections{\pi^{\ast}E}\to \XX(E)\xto{\pi_{\ast}} \sections{\pi^{\ast}\tangent{M}} \to 0\]
of $\smooth{E}$-modules. 
Analogously, for the graded manifold $\cM=E[-1]$ with base $M$, one obtains the corresponding short exact sequence \begin{equation}\label{eq:VFSES}
0\to \sections{S(E[-1])^{\vee}} \tensor_{\smooth{M}}\sections{E[-1]} \xto{\iota} \XX(E[-1]) \xto{(\pi[1])_{\ast}} \sections{S(E[-1])^{\vee}} \tensor_{\smooth{M}}\XX(M)\to 0
\end{equation}
of graded $\smooth{\cM}=\sections{S(E[-1])^{\vee}}$-modules. Here, the map $\iota$ denotes the graded $\sections{S(E[-1])^{\vee}}$-linear extension of the interior product associated with an element of $\sections{E[-1]}$. The map $(\pi[1])_{\ast}$ is the pushforward induced by the morphism of DG manifolds $\pi[1]: E[-1]\to M$.  The map $\pi[1]$ can be understood as the composition of the degree shift $E[-1]\to E$ with the bundle projection $\pi:E\to M$.

A $\sections{S(E[-1])^{\vee}}$-linear map 
\[\tau: \sections{S(E[-1])^{\vee}} \tensor_{\smooth{M}}\XX(M) \to \XX(E[-1])\]
satisfying $(\pi[1])_{\ast}\circ \tau = \id$ is called a (graded) horizontal lift of vector fields from $M$ to $\cM=E[-1]$.

Let $\nabla^{E}$ be a $\tangent{M}$-connection on $E$. Extending its dual connection via the Leibniz rule yields a $\tangent{M}$-connection on $\sections{\Lambda E^{\vee}} \cong \sections{S(E[-1])^{\vee}}$. By abuse of notation, we again denote this connection by $\nabla^{E}$. The horizontal lift associated with $\nabla^{E}$ is the graded $\sections{S(E[-1])^{\vee}}$-linear map
\[\tau=\tau^{\nabla^{E}}:\sections{S(E[-1])^{\vee}} \tensor_{\smooth{M}}\XX(M) \to \XX(E[-1]) \cong \Der(\sections{S(E[-1])^{\vee}})\]
characterised by 
\[\tau(X): \xi \mapsto \nabla^{E}_{X}\xi\]
for $X\in \XX(M)$ and $\xi \in \sections{S(E[-1])^{\vee}}$. In particular, we have $\nabla^{E}_{X}f=X(f)$ for $X\in \XX(M)$ and $f\in \smooth{M}$, which implies $(\pi[1])_{\ast}\circ \tau(X)=X$ for all $X\in \XX(M)$. This proves the following lemma.

\begin{lemma}\label{lem:grVF}
Upon a choice of a $\tangent{M}$-connection $\nabla^{E}$ on $E$, there exists a $\sections{S(E[-1])^{\vee}}$-module isomorphism
\[
\sections{S(E[-1])^{\vee}} \tensor_{\smooth{M}}(\sections{E[-1]}\oplus \sections{\tangent{M}}) \cong \XX(E[-1])
\]
given by the $\sections{S(E[-1])^{\vee}}$-linear extension of the assignments
\[ a\mapsto \iota_{a}, \qquad \text{and} \qquad X\mapsto \tau(X)=\nabla^{E}_{X}\]
for $a\in \sections{E[-1]}$ and $X\in \XX(M)$. 
\end{lemma}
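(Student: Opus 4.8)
The plan is to recognise the asserted isomorphism as the splitting of the short exact sequence~\eqref{eq:VFSES} determined by the horizontal lift $\tau=\tau^{\nabla^{E}}$. The key point, already recorded in the discussion preceding the lemma, is that $\tau$ is a $\sections{S(E[-1])^{\vee}}$-linear section of the surjection $(\pi[1])_{\ast}$, i.e. $(\pi[1])_{\ast}\circ\tau=\id$. First I would confirm that $\tau(X)=\nabla^{E}_{X}$ really is an element of $\XX(E[-1])=\Der(\sections{S(E[-1])^{\vee}})$: the dual connection of $\nabla^{E}$ extends by the graded Leibniz rule to a degree $0$ derivation of the graded symmetric algebra $\sections{S(E[-1])^{\vee}}\cong\sections{\Lambda E^{\vee}}$, which acts on $\smooth{M}\subset\sections{S(E[-1])^{\vee}}$ by $f\mapsto X(f)$; this is precisely the defining property of $\tau(X)$, and it also shows that $X\mapsto\tau(X)$ is $\smooth{M}$-linear. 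Since $a\mapsto\iota_{a}$ is likewise $\smooth{M}$-linear, the map
\[\phi:\sections{S(E[-1])^{\vee}} \tensor_{\smooth{M}}\bigl(\sections{E[-1]}\oplus \sections{\tangent{M}}\bigr) \to \XX(E[-1]),\qquad \omega\tensor(a,X)\mapsto \omega\cdot\iota_{a}+\omega\cdot\tau(X),\]
is well defined and $\sections{S(E[-1])^{\vee}}$-linear, and it is exactly the map appearing in the statement.

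Next I would deduce that $\phi$ is an isomorphism from exactness of~\eqref{eq:VFSES} alone. Because $\tau$ is a section of $(\pi[1])_{\ast}$, the sequence splits, giving $\XX(E[-1])=\image(\iota)\oplus\image(\tau)$ as graded $\sections{S(E[-1])^{\vee}}$-modules with $\image(\iota)=\ker((\pi[1])_{\ast})$; in particular $\iota$ is injective (exactness at the left term of~\eqref{eq:VFSES}) and $\tau$ is injective (from $(\pi[1])_{\ast}\circ\tau=\id$). Using the canonical distributivity isomorphism $A\tensor_{R}(B\oplus C)\cong (A\tensor_{R}B)\oplus(A\tensor_{R}C)$, the source of $\phi$ splits as
\[\bigl(\sections{S(E[-1])^{\vee}} \tensor_{\smooth{M}}\sections{E[-1]}\bigr)\ \oplus\ \bigl(\sections{S(E[-1])^{\vee}} \tensor_{\smooth{M}}\sections{\tangent{M}}\bigr),\]
and $\phi$ restricts to $\iota$ on the first summand and to $\tau$ on the second, each an isomorphism onto its image; hence $\phi$ is an isomorphism.

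The only place where genuine content is hidden is the exactness of~\eqref{eq:VFSES} itself — surjectivity of $(\pi[1])_{\ast}$ together with the equality $\image(\iota)=\ker((\pi[1])_{\ast})$ — since everything after that is formal. This is a local matter: over an open $U\subset M$ that trivialises $E$ and carries a frame $\{e_{i}\}$, a graded vector field on $\cM_{U}=(E|_{U})[-1]$ is, in the induced coordinates, a $\sections{S(E[-1])^{\vee}}$-linear combination of the coordinate derivations along the base (the images under $\tau$ of the coordinate vector fields on $U$) and of the fibre derivations $\iota_{e_{i}}$ (the images under $\iota$); since both sides of the claimed isomorphism are sheaves of $\sections{S(E[-1])^{\vee}}$-modules, a global section is determined by its restrictions, so the local statement globalises. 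As the paper already presents~\eqref{eq:VFSES} as a short exact sequence, in the write-up I would simply invoke it and present Lemma~\ref{lem:grVF} as the observation that $\tau$ splits it.
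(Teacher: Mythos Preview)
Your proposal is correct and follows exactly the same approach as the paper: the paper's entire argument is the one sentence immediately preceding the lemma, namely that $(\pi[1])_{\ast}\circ\tau=\id$, so $\tau$ splits the short exact sequence~\eqref{eq:VFSES}. Your write-up simply fills in the routine details (well-definedness, module-linearity, and a local check of exactness) that the paper leaves implicit.
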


\begin{remark}
  A $\tangent{M}$-connection $\nabla^{E}$ on $E$ naturally induces a corresponding $\tangent{M}$-connection on the graded vector bundle $E[-1]$, and vice versa. In fact, such identification was already made in Lemma~\ref{lem:grVF}. In what follows, we will freely identify these connections depending on the context.
\end{remark}

As a consequence of Lemma~\ref{lem:grVF}, any graded vector field on $\cM=E[-1]$ has degree at most $+1$. Moreover, by restricting to the homogeneous component of degree $+1$ and $0$, we have
\begin{equation}\label{eq:grVFdegree}
\XX(E[-1])^{+1} \cong \sections{E[-1]}, \qquad
\XX(E[-1])^{0} \cong \sections{(E[-1])^{\vee} \tensor  E[-1]} \oplus \XX(M)
\end{equation}
where the symbol $\XX(E[-1])^{n}$ denotes the homogeneous component of degree $n$.

Next, we compute the Lie bracket on $\XX(E[-1])$. To simplify the notation, we denote the horizontal lift of $X\in \XX(M)$, obtained in Lemma~\ref{lem:grVF}, by $\hat{X}:=\tau(X)\in \XX(E[-1])$.
\begin{lemma}\label{lem:LieBracket}
Under the decomposition in Lemma~\ref{lem:grVF}, the graded commutator bracket $[\argument,\argument]$ on $\XX(E[-1])$ satisfies
\[
 [\iota_{a},\iota_{b}]=0, \quad [\hat{X}, \iota_{a}]=\iota_{\nabla^{E}_{X}a}, \quad [\hat{X},\hat{Y}]=\widehat{[X,Y]}-\iota_{R^{\nabla^{E}}(X,Y)}
\]
where $R^{\nabla^{E}}$ is the curvature of $\nabla^{E}$, and $\iota_{R^{\nabla^{E}}(X,Y)}$ denotes the image of $R^{\nabla^{E}}(X,Y)$, viewed as an element of $\sections{(E[-1])^{\vee}}\tensor_{\smooth{M}}\sections{E[-1]}$, under the map $\iota$ in \eqref{eq:VFSES}.
\end{lemma}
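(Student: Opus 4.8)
The plan is to verify each of the three bracket identities by evaluating both sides as derivations of $\sections{S(E[-1])^{\vee}}$, using the fact (from Lemma~\ref{lem:grVF}) that a graded vector field on $E[-1]$ is determined by its action on $\smooth{M}$ and on $\sections{E^{\vee}[1]}\subset\sections{S(E[-1])^{\vee}}$, since these generate the function algebra. Throughout I identify $\iota_a$ with the degree $+1$ derivation contracting against $a\in\sections{E}$, and $\hat X=\tau(X)$ with the degree $0$ derivation extending $\nabla^E_X$ on linear functions and agreeing with $X$ on $\smooth{M}$.

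First, $[\iota_a,\iota_b]=0$: both $\iota_a$ and $\iota_b$ kill $\smooth{M}$ for degree reasons, and on a linear function $\xi\in\sections{E^{\vee}[1]}$ one has $\iota_a\xi=\langle\xi,a\rangle\in\smooth{M}$, so $\iota_b\iota_a\xi=0$; since $\iota_a,\iota_b$ both have odd degree the graded commutator is $\iota_a\iota_b+\iota_b\iota_a$, and this vanishes on generators, hence everywhere. Second, $[\hat X,\iota_a]=\iota_{\nabla^E_X a}$: evaluate on $\xi\in\sections{E^{\vee}[1]}$. We get $\hat X(\iota_a\xi)-\iota_a(\hat X\xi)=X\langle\xi,a\rangle-\langle\nabla^E_X\xi,a\rangle$, which by compatibility of $\nabla^E$ with the dual pairing equals $\langle\xi,\nabla^E_X a\rangle=\iota_{\nabla^E_X a}\xi$; on $\smooth{M}$ both sides vanish. (One should note the sign: $\hat X$ is even, $\iota_a$ is odd, so $[\hat X,\iota_a]=\hat X\iota_a-\iota_a\hat X$, no sign.) Third, $[\hat X,\hat Y]=\widehat{[X,Y]}-\iota_{R^{\nabla^E}(X,Y)}$: on $f\in\smooth{M}$ the left side is $[X,Y](f)=\widehat{[X,Y]}(f)$ and $\iota_{R^{\nabla^E}(X,Y)}f=0$, so the identity holds on $\smooth{M}$. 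On $\xi\in\sections{E^{\vee}[1]}$, $[\hat X,\hat Y]\xi=\nabla^E_X\nabla^E_Y\xi-\nabla^E_Y\nabla^E_X\xi$, and comparing with $\widehat{[X,Y]}\xi=\nabla^E_{[X,Y]}\xi$ the difference is, by definition of curvature on the dual bundle, $-R^{\nabla^E}(X,Y)\cdot\xi$; identifying $R^{\nabla^E}(X,Y)\in\sections{\End(E)}$ with an element of $\sections{(E[-1])^{\vee}\tensor E[-1]}$ and recalling that $\iota$ extends the contraction, this last term is exactly $-\iota_{R^{\nabla^E}(X,Y)}\xi$.

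The only genuinely delicate point is sign bookkeeping: one must be careful that the degree shift $E\rightsquigarrow E[-1]$ turns fibre coordinates into odd generators, so that $\iota_a$ is odd and the graded Jacobi identity and graded symmetry of the curvature term enter correctly; I would check the Koszul signs once on generators and then invoke density of the generators. A secondary routine point is confirming that the derivation $[\hat X,\hat Y]$, a priori of degree $0$, indeed lands in the span of horizontal lifts and contractions as predicted — but this is automatic from the decomposition in Lemma~\ref{lem:grVF} together with the fact that $[\hat X,\hat Y]$ restricted to $\smooth{M}$ is the vector field $[X,Y]$ on $M$, which pins down the horizontal component, leaving the vertical (contraction) part to be read off on linear functions as above. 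No serious obstacle is expected; the lemma is a graded analogue of the standard computation of the Lie bracket of horizontal and vertical vector fields on the total space of a vector bundle with connection.
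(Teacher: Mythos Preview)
Your proposal is correct and is precisely the direct computation the paper alludes to; the paper's own proof consists of the single sentence ``The result follows by the direct computation using the definition of $\iota$ and $\tau=\tau^{\nabla^{E}}$,'' so your argument in fact supplies the details that the paper omits.
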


\begin{proof}
The result follows by the direct computation using the definition of $\iota$ and $\tau=\tau^{\nabla^{E}}$.
\end{proof}

Finally, we investigate torsion-free affine connections on $\cM=E[-1]$.

\begin{proposition}\label{prop:Connection}
A torsion-free affine connection $\nabla:\XX(E[-1])\times \XX(E[-1]) \to \XX(E[-1])$ is equivalent to a triple $(\nabla^{E}, \nabla^{M}, \beta)$ where 
\begin{enumerate}
\item $\nabla^{E}:\sections{\tangent{M}}\times \sections{E}\to \sections{E}$ is a $\tangent{M}$-connection on $E$;
\item $\nabla^{M}:\sections{\tangent{M}}\times \sections{\tangent{M}}\to \sections{\tangent{M}}$ is a torsion-free affine connection on $M$;
\item an element $\beta\in \sections{\cotangent{M}\tensor \cotangent{M}\tensor \End(E)}$ satisfying
\[\beta(X,Y)-\beta(Y,X)=- R^{\nabla^{E}}(X,Y).\]
\end{enumerate}
In particular, denoting $\hat{X}=\nabla^{E}_{X}\in \XX(E[-1])$ for each $X\in \XX(M)$, we have an explicit formula:
\[
 \nabla_{\iota_{a}}\iota_{b}=0, 
\quad \nabla_{\iota_{a}} \hat{X}=0, 
\quad \nabla_{\hat{X}}\iota_{a}=\iota_{\nabla^{E}_{X}a}, 
\quad \nabla_{\hat{X}}\hat{Y}=\widehat{\nabla^{M}_{X}Y}+\iota_{\beta(X,Y)},
\]
where $\iota_{\beta(X,Y)}$ denotes the image of $\beta(X,Y)$, viewed as an element of $\sections{(E[-1])^{\vee}}\tensor_{\smooth{M}}\sections{E[-1]}$ under the map $\iota$ in \eqref{eq:VFSES}.
\end{proposition}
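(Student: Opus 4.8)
The plan is to set up a dictionary between torsion-free affine connections $\nabla$ on $\cM = E[-1]$ and triples $(\nabla^E, \nabla^M, \beta)$, using the decomposition of $\XX(E[-1])$ furnished by Lemma~\ref{lem:grVF}. First I would fix once and for all an auxiliary $\tangent M$-connection $\nabla^E_0$ on $E$ so that Lemma~\ref{lem:grVF} gives the splitting $\XX(E[-1]) \cong \sections{S(E[-1])^\vee} \tensor_{\smooth M}(\sections{E[-1]} \oplus \XX(M))$, with $a \mapsto \iota_a$ and $X \mapsto \hat X = (\nabla^E_0)_X$. Since $\nabla$ is $\sections{S(E[-1])^\vee}$-bilinear (condition (1) of a $\tangent\cM$-connection) and $\sections{S(E[-1])^\vee}$ is generated over $\smooth M$ by $\sections{E^\vee[1]}$, the connection $\nabla$ is determined by the four expressions $\nabla_{\iota_a}\iota_b$, $\nabla_{\iota_a}\hat X$, $\nabla_{\hat X}\iota_a$, $\nabla_{\hat X}\hat Y$ for $a,b \in \sections{E}$, $X,Y \in \XX(M)$, together with the Leibniz rule (2) which dictates how $\nabla$ interacts with multiplication by functions from $\smooth M$ and from $\sections{E^\vee[1]}$.

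Next I would analyze the degree constraints. Recall from Eq.~\eqref{eq:grVFdegree} that $\XX(E[-1])$ is concentrated in degrees $0$ and $+1$, with $\XX(E[-1])^{+1} \cong \sections{E[-1]}$ spanned by the $\iota_a$, and $\XX(E[-1])^0 \cong \sections{\End(E)} \oplus \XX(M)$. Since $\nabla$ has degree $0$: the element $\nabla_{\iota_a}\iota_b$ has degree $+2$, hence vanishes; $\nabla_{\iota_a}\hat X$ has degree $+1$ but must moreover be $\smooth M$-linear in $X$ after accounting for the Leibniz rule — here I would use torsion-freeness, $\nabla_{\iota_a}\hat X - \nabla_{\hat X}\iota_a = [\iota_a, \hat X] = -[\hat X, \iota_a] = -\iota_{(\nabla^E_0)_X a}$ by Lemma~\ref{lem:LieBracket}, combined with a separate argument that $\nabla_{\iota_a}\hat X$ is tensorial and must vanish (it is the only way the torsion identity can be consistent with $\nabla_{\hat X}\iota_a$ being a connection-type term; I expect one checks $\nabla_{\iota_a}\hat X = 0$ by evaluating Leibniz in the $\sections{E^\vee[1]}$ direction). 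Then $\nabla_{\hat X}\iota_a$ is forced to equal $\iota_{(\nabla^E_0)_X a}$ by the torsion identity, so there is \emph{no freedom} in the first three slots — they are rigidly fixed by the choice of splitting, i.e.\ by $\nabla^E := \nabla^E_0$. Finally $\nabla_{\hat X}\hat Y$ lies in degree $0$, so decomposes uniquely as $\widehat{(\nabla^M_X Y)} + \iota_{\beta(X,Y)}$ for some $\nabla^M : \XX(M) \times \XX(M) \to \XX(M)$ and some $\beta : \XX(M) \times \XX(M) \to \sections{\End(E)}$; checking the Leibniz rule in the $\smooth M$ direction shows $\nabla^M$ is a genuine affine connection on $M$ and $\beta$ is $\smooth M$-bilinear, i.e.\ a tensor in $\sections{\cotangent M \tensor \cotangent M \tensor \End(E)}$.

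It remains to translate torsion-freeness of $\nabla$ into the three stated conditions. Using torsion-freeness $\nabla_{\hat X}\hat Y - \nabla_{\hat Y}\hat X = [\hat X, \hat Y]$ and Lemma~\ref{lem:LieBracket}, $[\hat X, \hat Y] = \widehat{[X,Y]} - \iota_{R^{\nabla^E}(X,Y)}$, and comparing the $\XX(M)$-components gives that $\nabla^M$ is torsion-free, while comparing the $\sections{E[-1]}$-components gives exactly $\beta(X,Y) - \beta(Y,X) = -R^{\nabla^E}(X,Y)$. Conversely, given a triple $(\nabla^E, \nabla^M, \beta)$ satisfying (1)--(3), I would \emph{define} $\nabla$ by the four displayed formulas and extend by $\sections{S(E[-1])^\vee}$-bilinearity and the Leibniz rule, then verify that this is well-defined (the graded Leibniz rule is consistent — a routine check against the relations in $\sections{S(E[-1])^\vee}$) and torsion-free (running the above computation backwards). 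The main obstacle I anticipate is the bookkeeping in verifying that the four formulas genuinely assemble into a globally well-defined $\tangent\cM$-connection — i.e.\ that prescribing $\nabla$ on the generators $\iota_a, \hat X$ and decreeing Leibniz does not run into a consistency obstruction — and, relatedly, pinning down carefully that $\nabla_{\iota_a}\hat X = 0$ rather than some other tensorial term; both amount to carefully tracking signs and the module structure, but no conceptual difficulty beyond that.
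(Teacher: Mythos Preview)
Your overall strategy matches the paper's, but there is a genuine gap in the step where you claim $\nabla_{\iota_a}\hat X = 0$ for an \emph{arbitrary} auxiliary splitting $\nabla^E_0$. This is false. With $\hat X = (\nabla^E_0)_X$, the element $\nabla_{\hat X}\iota_a$ lies in $\XX(E[-1])^{+1}\cong\sections{E[-1]}$ and, by Leibniz in the $\smooth{M}$-direction, defines some $\tangent{M}$-connection $\nabla^E$ on $E$ via $\nabla_{\hat X}\iota_a = \iota_{\nabla^E_X a}$. Torsion-freeness and Lemma~\ref{lem:LieBracket} then give
\[
\nabla_{\iota_a}\hat X \;=\; \nabla_{\hat X}\iota_a - [\hat X,\iota_a] \;=\; \iota_{\nabla^E_X a} - \iota_{(\nabla^E_0)_X a} \;=\; \iota_{(\nabla^E - \nabla^E_0)_X\, a},
\]
which vanishes only when $\nabla^E = \nabla^E_0$. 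There is no Leibniz relation ``in the $\sections{E^\vee[1]}$-direction'' to exploit here, since $\hat X$ carries no $\sections{E^\vee[1]}$-coefficient to differentiate. Your conclusion that ``there is no freedom in the first three slots, $\nabla^E := \nabla^E_0$'' would in fact force every torsion-free $\nabla$ to have the same $\nabla^E$-component, which is absurd.

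The paper repairs this as follows: it first observes that the connection $\nabla^E$ extracted from $(X,a)\mapsto \nabla_{\hat X}\iota_a$ is \emph{independent of the auxiliary horizontal lift} (any two lifts of $X$ differ by an element of $\image\iota$, and $\nabla_{\iota_b}\iota_a=0$ by the degree argument). Having canonically produced $\nabla^E$ from $\nabla$, one then \emph{switches} to the horizontal lift $\hat X := \nabla^E_X$ adapted to $\nabla^E$; in this splitting the computation above gives $\nabla_{\iota_a}\hat X = 0$ on the nose, and the remaining extraction of $\nabla^M$ and $\beta$ proceeds exactly as you outline. So the fix is not a new idea but a reordering: extract $\nabla^E$ first, then choose the splitting, rather than the reverse.
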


\begin{remark}
Similar result also holds for graded manifolds of amplitude $-1$. It is partly shown in \cite{arXiv:2302.04956}.
\end{remark}

\begin{proof}
  Suppose we are given a triple $(\nabla^{E},\nabla^{M},\beta)$. Then, under the identification in Lemma~\ref{lem:grVF}, it is straightforward to verify that the explicit formulas in Proposition~\ref{prop:Connection} uniquely determine an affine connection $\nabla$ on $\cM=E[-1]$. Moreover, by Lemma~\ref{lem:LieBracket}, it is clear that this connection $\nabla$ is torsion-free.

Conversely, let $\nabla$ be a torsion-free affine connection on $E[-1]$. Using Lemma~\ref{lem:grVF} and the torsion-freeness, the connection $\nabla$ is completely determined by its values on pairs of the form 
\[\nabla_{\iota_{a}}\iota_{b},\quad \nabla_{\hat{X}}\iota_{a},\quad \nabla_{\hat{X}}\hat{Y}\] for $a,b\in\sections{E[-1]}$ and $X,Y\in \XX(M)$.

First, observe that for any $a,b\in \sections{E[-1]}$, the vector field $\nabla_{\iota_{a}}\iota_{b}$ is of degree $2$. By Lemma~\ref{lem:grVF}, any homogeneous vector field of degree $2$ on $E[-1]$ must vanish, i.e. $\nabla_{\iota_{a}}\iota_{b}=0$.

Second, consider the assignment $\XX(M)\times \sections{E[-1]}\to \sections{E[-1]}$ defined by
\[(X,a)\mapsto \nabla_{\hat{X}}\iota_{a}\in \XX(E[-1])^{+1} \cong \sections{E[-1]}.\]
Since $\hat{X}(f)=X(f)$ for $f\in \smooth{M}$, this assignment satisfies 
 \[ (fX,a)\mapsto f\nabla_{\hat{X}}\iota_{a}, \qquad (X,fa)\mapsto X(f)\cdot \iota_{a}+f \nabla_{\hat{X}}\iota_{a}.\]
Thus, this defines a $\tangent{M}$-connection on $E[-1]$, or equivalently, a $\tangent{M}$-connection $\nabla^{E}$ on $E$. In other words, $\nabla$ induces a connection $\nabla^{E}$ such that $\nabla_{\hat{X}}\iota_{a}=\iota_{\nabla^{E}_{X}a}$.
 
Moreover, the connection $\nabla^{E}$ is independent of the choice of horizontal lifts. 
Denote another horizontal lift of $X\in \XX(M)$ by $\bar{X}$. 
By the short exact sequence \eqref{eq:VFSES}, we have $\hat{X}-\bar{X}\in \image \iota$. 
Since $\nabla_{\iota_{b}}\iota_{a}=0$ for all $a,b\in \sections{E[-1]}$, it follows that $\nabla_{\hat{X}-\bar{X}}\iota_{a}=0$ for all $a\in \sections{E[-1]}$. 
This shows that the induced connection $\nabla$ is independent of the choice of horizontal lifts.

Similarly, the assignment 
\[(X,Y)\mapsto (\pi[1])_{\ast}(\nabla_{\hat{X}}\hat{Y})\in \sections{\tangent{M}}\]
defines an affine connection $\nabla^{M}$ on $M$, where $(\pi[1])_{\ast}$ is as defined in~\eqref{eq:VFSES}. A similar argument shows that $\nabla^{M}$ is also independent of the choice of horizontal lifts. 

So far, we have shown that a torsion-free affine connection $\nabla$ on $E[-1]$ induces $\nabla^{E}$ and $\nabla^{M}$, and that both $\nabla^{E}$ and $\nabla^{M}$ are independent of the choice of horizontal lift.
We may therefore choose the horizontal lift to be the one associated with $\nabla^{E}$, i.e., $\hat{X}=\nabla^{E}_{X}$. In this setting, define 
\[\iota_{\beta(X,Y)}:=\nabla_{\hat{X}}\hat{Y}-\widehat{\nabla^{M}_{X}Y}.\]

Finally, it is straightforward to check that, by Lemma~\ref{lem:LieBracket} and torsion-freeness of $\nabla$, the connection $\nabla^{M}$ is torsion-free and that  
\[\beta(X,Y)-\beta(Y,X)=-R^{\nabla^{E}}(X,Y).\]
This completes the proof.
\end{proof}

\subsubsection{Computation of the Atiyah class: global} \label{subsec:global}
In general, the Atiyah class of a DG manifold $(\cM,Q)$ lies in the first cohomology group of the cochain complex 
$(\sections{\cotangent{\cM}\tensor \End (\tangent{\cM})}^{\bullet}, \liederivative{Q})$.
In fact, by Proposition~\ref{prop:AtiyahProperty}, it is given by an element in the first cohomology group of the subcomplex 
\[(\sections{\Hom(S^{2}(\tangent{\cM}), \tangent{\cM})}^{\bullet}, \liederivative{Q}).\]
In this subsection, we work with the DG manifold $(\cM,Q)=(E[-1],\iota_{s})$, and compute its Atiyah class by analysing the first cohomology group of this complex.

Choose a $\tangent{M}$-connection $\nabla^{E}$ on $E$. Then, by Lemma~\ref{lem:grVF}, one obtains an isomorphism of graded $\smooth{\cM}=\sections{S(E[-1])^{\vee}}$-modules
\begin{equation*}\label{eq:1-2-tensor}
\sections{\Hom(S^{2}(\tangent{\cM}), \tangent{\cM})} \cong  \sections{S(E[-1])^{\vee} \tensor \Hom (S^{2}(E[-1]\oplus \tangent{M}),E[-1]\oplus \tangent{M})}.
\end{equation*}

Under this identification, a straightforward degree argument shows that:
\begin{enumerate}
  \item $\sections{\Hom(S^{2}(\tangent{\cM}), \tangent{\cM})}$ is concentrated in degree $\leq 1$.
  \item degree $+1$ component of $\sections{\Hom(S^{2}(\tangent{\cM}), \tangent{\cM})}$ is 
    \begin{equation}\label{eq:1-2-1}
    \sections{\Hom(S^{2}(\tangent{\cM}), \tangent{\cM})}^{1}\cong \sections{\Hom(S^{2}(\tangent{M}), E[-1])}.
    \end{equation}
  \item degree $0$ component of $\sections{\Hom(S^{2}(\tangent{\cM}), \tangent{\cM})}$ is
    \begin{equation}\label{eq:1-2-0}
      \begin{aligned}
      \sections{\Hom(S^{2}(\tangent{\cM}), \tangent{\cM})}^{0}\cong 
      & \quad \sections{S^{1}(E[-1])^{\vee}\tensor\Hom(S^{2}(\tangent{M}), E[-1])}\\
      & \oplus \sections{\Hom(E[-1]\tensor \tangent{M}, E[-1])} \\
      & \oplus \sections{\Hom(S^{2}(\tangent{M}), \tangent{M})}.
      \end{aligned}
    \end{equation}
\end{enumerate}

Next, we study the restriction of the coboundary operator to the degree~$0$ component:
\[
\liederivative{Q}:\sections{\Hom(S^{2}(\tangent{\cM}), \tangent{\cM})}^{0}\to \sections{\Hom(S^{2}(\tangent{\cM}), \tangent{\cM})}^{1} .
\]
Since $Q=\iota_{s}$ is $\smooth{M}$-linear, the operator $\liederivative{Q}$ is also $\smooth{M}$-linear.
\begin{lemma}\label{lem:DecomposeLQ}
The above coboundary operator decomposes as a sum of $\smooth{M}$-linear maps 
\[\liederivative{Q}=d_{1}+d_{2}+d_{3}\] 
where:
\begin{enumerate}
\item the first map 
  \[d_{1}: \sections{E^{\vee}\tensor\Hom(S^{2}(\tangent{M}), E[-1])} \to \sections{\Hom(S^{2}(\tangent{M}), E[-1])}\] 
  is given by applying $\iota_{s}:\sections{E^{\vee}}\to \smooth{M}$ to the first factor.
\item the second map 
  \[d_{2}: \sections{\Hom(E[-1]\tensor \tangent{M}, E[-1])} \to \sections{\Hom(S^{2}(\tangent{M}), E[-1])}\]
is induced by precomposition with the bundle map
\[S^{2}(\tangent{M}) \to E[-1]\tensor \tangent{M}\] 
defined by 
\[X\odot Y \mapsto \nabla^{E}_{X}s\tensor Y + \nabla^{E}_{Y}s \tensor X\]
for $X,Y\in \XX(M)$.
\item the third map 
\[d_{3}:\sections{\Hom(S^{2}(\tangent{M}), \tangent{M})} \to \sections{\Hom(S^{2}(\tangent{M}), E[-1])}\] 
is induced by composition with the bundle map 
\[\nabla^{E}s:\tangent{M}\to E[-1]\] 
defined by 
\[ X\mapsto \nabla^{E}_{X}s\] 
for $X\in \XX(M)$.
\end{enumerate}
\end{lemma}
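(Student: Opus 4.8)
The plan is a direct computation in the coordinates furnished by Lemma~\ref{lem:grVF}. First I would record normal forms. By Lemma~\ref{lem:grVF} and \eqref{eq:grVFdegree}, every vector field on $\cM=E[-1]$ is a $\smooth{\cM}$-linear combination of the $\iota_{a}$ ($a\in\sections{E[-1]}$, degree $+1$) and the horizontal lifts $\hat{X}=\tau^{\nabla^{E}}(X)$ ($X\in\XX(M)$, degree $0$), and $\XX(\cM)$ vanishes in degrees $\geq 2$. Hence a $\smooth{\cM}$-linear tensor $G\in\sections{\Hom(S^{2}(\tangent{\cM}),\tangent{\cM})}^{1}$ of degree $+1$ automatically kills the generators $\iota_{a}\odot\hat{X}$ and $\iota_{a}\odot\iota_{b}$ (their images would lie in the zero groups $\XX(\cM)^{2}$, $\XX(\cM)^{3}$), so it is determined by the $\smooth{M}$-bilinear symmetric assignment $(X,Y)\mapsto G(\hat{X},\hat{Y})\in\XX(\cM)^{1}\cong\sections{E[-1]}$; this is exactly the identification \eqref{eq:1-2-1}. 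Likewise, a degree-$0$ tensor $F$ is determined by $F(\hat{X},\hat{Y})=\iota_{\psi(X,Y)}+\widehat{Z(X,Y)}$ and $F(\iota_{a},\hat{Y})=\iota_{\chi(a,Y)}$, with $F(\iota_{a},\iota_{b})=0$ forced by degree, where $\psi$ (an $\End(E[-1])$-valued $\smooth{M}$-bilinear symmetric map), $\chi\in\sections{\Hom(E[-1]\tensor\tangent{M},E[-1])}$, and $Z\in\sections{\Hom(S^{2}(\tangent{M}),\tangent{M})}$ are precisely the three summands of \eqref{eq:1-2-0}.

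Next I would evaluate $\liederivative{Q}F$ on $\hat{X}\odot\hat{Y}$ (which, by the first paragraph, determines it). Since $|F|=0$ and $|\hat{X}|=|\hat{Y}|=0$, the definition of $\liederivative{Q}=Q_{\cE}$ gives
\[(\liederivative{Q}F)(\hat{X},\hat{Y})=[Q,F(\hat{X},\hat{Y})]-F([Q,\hat{X}],\hat{Y})-F(\hat{X},[Q,\hat{Y}]).\]
The crucial bracket is $[Q,\hat{X}]=[\iota_{s},\nabla^{E}_{X}]=-\iota_{\nabla^{E}_{X}s}$, which is $-[\hat{X},\iota_{s}]$ read off from Lemma~\ref{lem:LieBracket}. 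Substituting this, and using the graded symmetry of $F$ on $S^{2}$ (so $F(\hat X,\iota_{b})=F(\iota_{b},\hat X)$), the last two terms collapse to $\iota_{\chi(\nabla^{E}_{X}s,Y)}+\iota_{\chi(\nabla^{E}_{Y}s,X)}$, i.e.\ precomposition of $\chi$ with the bundle map $X\odot Y\mapsto\nabla^{E}_{X}s\tensor Y+\nabla^{E}_{Y}s\tensor X$; this is the map $d_{2}$. Substituting $F(\hat{X},\hat{Y})=\iota_{\psi(X,Y)}+\widehat{Z(X,Y)}$ into the first term and invoking Lemma~\ref{lem:LieBracket} once more yields $[Q,\widehat{Z(X,Y)}]=-\iota_{\nabla^{E}_{Z(X,Y)}s}$, which realizes $d_{3}$, composition of $Z$ with $\nabla^{E}s\colon\tangent{M}\to E[-1]$.

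The only bracket not literally among those in Lemma~\ref{lem:LieBracket} is $[\iota_{s},\iota_{\psi(X,Y)}]$ with $\psi(X,Y)$ an $\End(E[-1])$-valued section; I would dispose of it by writing $\psi(X,Y)=\sum_{k}\phi_{k}\tensor a_{k}$ locally with $\phi_{k}\in\smooth{\cM}^{-1}=\sections{E^{\vee}[1]}$ and $a_{k}\in\sections{E[-1]}$, so that $\iota_{\psi(X,Y)}=\sum_{k}\phi_{k}\,\iota_{a_{k}}$ by $\smooth{\cM}$-linearity of $\iota$, whence the graded Leibniz rule for $\iota_{s}$ together with $[\iota_{s},\iota_{a_{k}}]=0$ gives $[\iota_{s},\iota_{\psi(X,Y)}]=\sum_{k}\iota_{s}(\phi_{k})\,\iota_{a_{k}}=\iota_{\psi(X,Y)(s)}$, i.e.\ $d_{1}$ (contraction of the $E^{\vee}$-factor against $s$). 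Collecting the three contributions, and noting that $\liederivative{Q}$ is $\smooth{M}$-linear because $Q=\iota_{s}$ annihilates $\smooth{M}$, yields $\liederivative{Q}=d_{1}+d_{2}+d_{3}$ on the degree-$0$ component, with each $d_{i}$ manifestly $\smooth{M}$-linear. The computation is routine once the normal form for $F$ is in hand; I expect the only real effort to be bookkeeping — keeping the identifications \eqref{eq:1-2-1} and \eqref{eq:1-2-0} straight and pinning down the graded signs against the conventions of Lemmas~\ref{lem:grVF} and~\ref{lem:LieBracket} — rather than any conceptual obstacle.
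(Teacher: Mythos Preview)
Your proposal is correct and follows exactly the approach the paper indicates: the paper's proof is a single sentence stating that the lemma ``follows from a direct and explicit computation of $[\iota_{s},\argument]$ in Lemma~\ref{lem:LieBracket}, together with identifications~\eqref{eq:1-2-1} and~\eqref{eq:1-2-0},'' and you have carried out precisely that computation. Your caveat about sign bookkeeping is apt---for instance $[\iota_s,\widehat{Z(X,Y)}]=-\iota_{\nabla^E_{Z(X,Y)}s}$ carries a sign that must be absorbed into the identification---but since Corollary~\ref{cor:H1} only uses $\image(d_i)$, this does not affect anything downstream.
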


\begin{proof}
  It follows from a direct and explicit computation of $[\iota_{s},\argument]$ in Lemma~\ref{lem:LieBracket}, together with identifications~\eqref{eq:1-2-1} and~\eqref{eq:1-2-0}. 
\end{proof}

\begin{corollary}\label{cor:H1}
  For $(\cM,Q)=(E[-1],\iota_{s})$, the first cohomology group $H^{1}(\sections{\Hom(S^{2}(\tangent{\cM}), \tangent{\cM})}, \liederivative{Q}) $ is isomorphic to the quotient
  \[  \left. \sections{\Hom(S^{2}(\tangent{M}), E[-1])}  \right/ \langle\image(d_{1}), \image(d_{2}), \image(d_{3}) \rangle \]
  where $d_{1}$, $d_{2}$ and $d_{3}$ are as defined in Lemma~\ref{lem:DecomposeLQ}.
\end{corollary}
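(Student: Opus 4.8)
The plan is to read $H^{1}$ directly off the structure of the complex $(\sections{\Hom(S^{2}(\tangent{\cM}), \tangent{\cM})}^{\bullet}, \liederivative{Q})$ already laid out in this subsection; no computation beyond Lemma~\ref{lem:DecomposeLQ} will be needed. First I would use the degree count recorded just above Lemma~\ref{lem:DecomposeLQ}: the complex is concentrated in degrees $\leq 1$, so its degree $+2$ term vanishes. Hence every element of the degree $+1$ term is automatically a cocycle, and $H^{1}$ is simply the quotient of $\sections{\Hom(S^{2}(\tangent{\cM}), \tangent{\cM})}^{1}$ by the image of
\[
\liederivative{Q}\colon \sections{\Hom(S^{2}(\tangent{\cM}), \tangent{\cM})}^{0}\longrightarrow \sections{\Hom(S^{2}(\tangent{\cM}), \tangent{\cM})}^{1}.
\]
By the identification \eqref{eq:1-2-1}, the target $\sections{\Hom(S^{2}(\tangent{\cM}), \tangent{\cM})}^{1}$ is precisely $\sections{\Hom(S^{2}(\tangent{M}), E[-1])}$, the module appearing in the statement.

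It then remains to identify the image of this $\liederivative{Q}$. For this I would invoke \eqref{eq:1-2-0}, which expresses the degree $0$ term as a direct sum of three $\smooth{M}$-modules, together with Lemma~\ref{lem:DecomposeLQ}, which says that under this splitting $\liederivative{Q}=d_{1}+d_{2}+d_{3}$ with $d_{1}$ supported on the first summand $\sections{S^{1}(E[-1])^{\vee}\tensor\Hom(S^{2}(\tangent{M}), E[-1])}$, $d_{2}$ on $\sections{\Hom(E[-1]\tensor \tangent{M}, E[-1])}$, and $d_{3}$ on $\sections{\Hom(S^{2}(\tangent{M}), \tangent{M})}$. Since the domain is a direct sum and the three maps live on distinct summands, evaluating $\liederivative{Q}$ on a general element $(a,b,c)$ gives $d_{1}(a)+d_{2}(b)+d_{3}(c)$; therefore $\image(\liederivative{Q}) = \image(d_{1})+\image(d_{2})+\image(d_{3}) = \langle\image(d_{1}),\image(d_{2}),\image(d_{3})\rangle$. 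Substituting this into the quotient from the previous paragraph yields the claimed description of $H^{1}$.

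I do not expect any genuine obstacle here: the real work is all in Lemma~\ref{lem:DecomposeLQ}, and the corollary itself is pure bookkeeping. The only point worth a sentence of care is that the identifications \eqref{eq:1-2-1} and \eqref{eq:1-2-0} and the decomposition $\liederivative{Q}=d_{1}+d_{2}+d_{3}$ are all taken relative to the same auxiliary $\tangent{M}$-connection $\nabla^{E}$ used to apply Lemma~\ref{lem:grVF}; that these are mutually compatible is exactly what the proof of Lemma~\ref{lem:DecomposeLQ} checks, and the resulting $H^{1}$ is of course independent of that choice since it is an intrinsic cohomology group.
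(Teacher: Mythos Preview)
Your proposal is correct and matches the paper's approach exactly: the paper states this corollary without proof, treating it as immediate from the degree count \eqref{eq:1-2-1}--\eqref{eq:1-2-0} and the decomposition in Lemma~\ref{lem:DecomposeLQ}, which is precisely the bookkeeping you spell out.
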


In particular, if $s\in \sections{E}$ is nowhere vanishing, the Atiyah class of $(E[-1],\iota_{s})$ vanishes.

\begin{corollary}\label{cor:NotLocus}
Assume that $s\in \sections{E}$ is a nowhere vanishing section. Then, for $(\cM,Q)=(E[-1],\iota_{s})$, we have 
\[H^{1}(\sections{\Hom(S^{2}(\tangent{\cM}), \tangent{\cM})}, \liederivative{Q})=0.\]
In particular, the Atiyah class of $(E[-1],\iota_{s})$ vanishes.
\end{corollary}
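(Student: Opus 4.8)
The plan is to deduce Corollary~\ref{cor:NotLocus} directly from Corollary~\ref{cor:H1} by showing that, when $s$ is nowhere vanishing, the image of $d_1$ alone already fills up all of $\sections{\Hom(S^2(\tangent{M}), E[-1])}$. The key observation is that $d_1$ is the $\smooth{M}$-linear map obtained by applying $\iota_s\colon \sections{E^\vee} \to \smooth{M}$ to the first tensor factor of $\sections{E^\vee \tensor \Hom(S^2(\tangent{M}), E[-1])}$. So I first would argue that the bundle map $\iota_s\colon E^\vee \to \underline{\RR}$ (the trivial line bundle) is surjective as a vector bundle morphism precisely because $s$ is nowhere zero: at each point $p$, the functional $\langle s(p), \argument\rangle$ on $E_p^\vee$ is nonzero since $s(p) \neq 0$, hence surjective onto $\RR$.

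Next I would upgrade this pointwise surjectivity to surjectivity on global sections after tensoring with the bundle $\Hom(S^2(\tangent{M}), E[-1])$. Concretely, $d_1$ is the map on sections induced by the surjective bundle morphism $\iota_s \tensor \id\colon E^\vee \tensor \Hom(S^2(\tangent{M}), E[-1]) \to \Hom(S^2(\tangent{M}), E[-1])$. A surjective morphism of smooth vector bundles over $M$ induces a surjection on global sections: one way to see this is to choose a splitting of the short exact sequence of vector bundles $0 \to \ker \to E^\vee \tensor \Hom(S^2(\tangent{M}), E[-1]) \to \Hom(S^2(\tangent{M}), E[-1]) \to 0$, which always exists for smooth vector bundles (pick a Riemannian metric, or use partitions of unity), and then the splitting gives an $\smooth{M}$-linear section of $d_1$ on the level of global sections. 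Hence $\image(d_1) = \sections{\Hom(S^2(\tangent{M}), E[-1])}$, so the quotient in Corollary~\ref{cor:H1} is zero, which gives $H^1(\sections{\Hom(S^2(\tangent{\cM}), \tangent{\cM})}, \liederivative{Q}) = 0$. Since the Atiyah cocycle is a degree $+1$ element of exactly this complex (by Proposition~\ref{prop:AtiyahProperty}, it even lies in the symmetric subcomplex $\sections{\Hom(S^2(\tangent{\cM}),\tangent{\cM})}$), its class is automatically zero, and the Atiyah class of $(E[-1],\iota_s)$ vanishes.

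I do not expect any serious obstacle here; this is a short argument. The only point requiring a little care is the passage from a surjection of vector bundles to a surjection on global sections, but this is a standard fact and can be dispatched in a sentence by invoking the existence of smooth splittings. One could alternatively phrase the whole thing in terms of a local frame: over a trivialising open set $U$ where $s = \sum a_i e_i$ with not all $a_i(p)$ vanishing, some $a_{i_0}$ is nonzero near $p$, and dividing by $a_{i_0}$ produces an explicit preimage under $d_1$; a partition of unity then globalises. Either route works, and I would present the bundle-theoretic one as it is cleanest.
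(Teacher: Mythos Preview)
Your proposal is correct and follows essentially the same approach as the paper: both show that $\image(d_1)$ is all of $\sections{\Hom(S^2(\tangent{M}),E[-1])}$ and then invoke Corollary~\ref{cor:H1}. The only difference is cosmetic: the paper bypasses the abstract ``surjective bundle maps split'' step by directly choosing a global section $\xi\in\sections{E^\vee}$ with $\iota_s(\xi)=1$ (which exists since $s$ is nowhere vanishing), so that $d_1(\xi\tensor\omega)=\omega$ for every $\omega$.
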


\begin{proof}
Since $s$ is nowhere vanishing, there exists a dual section $\xi \in \sections{E^{\vee}}$ such that $\iota_{s}(\xi) = 1 \in \smooth{M}$. Thus, by Lemma~\ref{lem:DecomposeLQ}, we have 
\[\image(d_{1}) = \sections{\Hom(S^{2}(\tangent{M}), E[-1])}.\] 
By Corollary~\ref{cor:H1}, the first cohomology group vanishes, and thus the Atiyah class, which belongs to the first cohomology group, vanishes. This completes the proof.
\end{proof}

Geometrically, Corollary~\ref{cor:NotLocus} implies that the Atiyah class of $(E[-1],\iota_{s})$ vanishes if the section $s:M\to E$ does not intersect the zero section $\sigma_{0}:M\to E$.

We now compute an Atiyah cocycle of the DG manifold $(E[-1],\iota_{s})$. 

\begin{proposition}\label{prop:AtiyahCocycleGlobal}
Let $(\cM,Q)=(E[-1],\iota_{s})$ be a DG manifold. Given a torsion-free affine connection $\nabla$ on $\cM$, the associated Atiyah cocycle $\At^{\nabla}\in \sections{\Hom(S^{2}(\tangent{\cM}), \tangent{\cM})}$ is completely determined by
\[\At^{\nabla}(\hat{X},\hat{Y})=\iota_{\nabla^{E}_{X}\nabla^{E}_{Y}s-\nabla^{E}_{(\nabla^{M}_{X}Y)}s + \beta(X,Y)s}\]
for $X,Y\in \XX(M)$ where $\nabla = ( \nabla^{E},\nabla^{M}, \beta)$ as in Proposition~\ref{prop:Connection}, and $\hat{X} = \nabla^{E}_{X}\in \XX(\cM)$ denotes the horizontal lift of $X\in \XX(M)$ associated with $\nabla^{E}$.
\end{proposition}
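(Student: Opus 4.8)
The plan is to use the explicit descriptions of vector fields, their brackets, and torsion-free affine connections on $\cM=E[-1]$ obtained in Lemmas~\ref{lem:grVF} and~\ref{lem:LieBracket} and Proposition~\ref{prop:Connection}, and to evaluate the defining formula of the Atiyah cocycle directly on horizontal lifts.

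First I would reduce the problem to computing $\At^{\nabla}(\hat X,\hat Y)$ for $X,Y\in\XX(M)$. By Proposition~\ref{prop:AtiyahProperty}, $\At^{\nabla}$ is a genuine $\smooth{\cM}$-bilinear, graded-symmetric $(1,2)$-tensor, so by Lemma~\ref{lem:grVF} it is determined by its values on the $\smooth{\cM}$-module generators $\iota_{a}$ (for $a\in\sections{E[-1]}$) and $\hat X=\nabla^{E}_{X}$ (for $X\in\XX(M)$) of $\XX(\cM)$. Since $\At^{\nabla}$ has degree $+1$ and, by~\eqref{eq:grVFdegree}, $\XX(\cM)$ is concentrated in degrees $\leq +1$, a degree count forces $\At^{\nabla}(\iota_{a},\iota_{b})=0$ (it would have degree $3$) and $\At^{\nabla}(\hat X,\iota_{b})=0=\At^{\nabla}(\iota_{a},\hat Y)$ (degree $2$); hence only $\At^{\nabla}(\hat X,\hat Y)$, of degree $1$, can be nonzero, and it lies in $\XX(\cM)^{+1}\cong\sections{E[-1]}$. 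This is precisely what ``completely determined by'' means here, and expanding a general pair of vector fields as $\smooth{\cM}$-linear combinations of the $\iota_a$ and $\hat X$ then recovers the whole cocycle (up to Koszul signs coming from moving functions past vector fields).

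Next I would expand $\At^{\nabla}(\hat X,\hat Y)=[Q,\nabla_{\hat X}\hat Y]-\nabla_{[Q,\hat X]}\hat Y-\nabla_{\hat X}[Q,\hat Y]$ (the sign $(-1)^{\degree{\hat X}}$ is trivial, as $\degree{\hat X}=0$), substitute $Q=\iota_{s}$, and evaluate each term. By Lemma~\ref{lem:LieBracket}, $[Q,\hat X]=[\iota_{s},\hat X]=-[\hat X,\iota_{s}]=-\iota_{\nabla^{E}_{X}s}$, so the middle term $\nabla_{[Q,\hat X]}\hat Y$ vanishes because $\nabla_{\iota_{a}}\hat Y=0$ by Proposition~\ref{prop:Connection}; likewise $\nabla_{\hat X}[Q,\hat Y]=-\nabla_{\hat X}\iota_{\nabla^{E}_{Y}s}=-\iota_{\nabla^{E}_{X}\nabla^{E}_{Y}s}$ using $\nabla_{\hat X}\iota_{a}=\iota_{\nabla^{E}_{X}a}$. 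Finally, writing $\nabla_{\hat X}\hat Y=\widehat{\nabla^{M}_{X}Y}+\iota_{\beta(X,Y)}$ as in Proposition~\ref{prop:Connection}, one gets $[Q,\nabla_{\hat X}\hat Y]=[\iota_{s},\widehat{\nabla^{M}_{X}Y}]+[\iota_{s},\iota_{\beta(X,Y)}]=-\iota_{\nabla^{E}_{(\nabla^{M}_{X}Y)}s}+\iota_{\beta(X,Y)s}$, the second bracket being computed by unwinding $\iota_{\beta(X,Y)}$ as the $\smooth{\cM}$-linear contraction associated with $\beta(X,Y)\in\sections{(E[-1])^{\vee}\tensor E[-1]}$ and applying the graded Leibniz rule together with $[\iota_{s},\iota_{a}]=0$. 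Summing the three contributions yields $\At^{\nabla}(\hat X,\hat Y)=\iota_{\nabla^{E}_{X}\nabla^{E}_{Y}s-\nabla^{E}_{(\nabla^{M}_{X}Y)}s+\beta(X,Y)s}$, as claimed.

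None of the individual bracket evaluations is hard; the one place that needs genuine care is the Koszul-sign bookkeeping, and in particular the verification that $[\iota_{s},\iota_{\beta(X,Y)}]=+\iota_{\beta(X,Y)s}$ rather than with a minus sign, since this is exactly what pins down the sign of the $\beta(X,Y)s$ term in the final formula. One should also double-check at the outset that the connection $\nabla^{E}$ appearing in the triple $(\nabla^{E},\nabla^{M},\beta)$, in the horizontal lift $\hat X=\nabla^{E}_{X}$, and in Lemma~\ref{lem:LieBracket} is one and the same, which is guaranteed by the construction in Proposition~\ref{prop:Connection}.
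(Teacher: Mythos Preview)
Your proposal is correct and follows essentially the same route as the paper: reduce by $\smooth{\cM}$-bilinearity and a degree count to the single nontrivial case $\At^{\nabla}(\hat X,\hat Y)$, then evaluate the three terms of the Atiyah cocycle using Lemma~\ref{lem:LieBracket} and Proposition~\ref{prop:Connection}. Your extra care in verifying $[\iota_{s},\iota_{\beta(X,Y)}]=+\iota_{\beta(X,Y)s}$ is well placed, as the paper simply asserts this step.
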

\begin{remark}
Under the identification~\eqref{eq:1-2-1}, the Atiyah cocycle $\At^{\nabla}$ above corresponds to the bundle map $\operatorname{at}^{\nabla}:S^{2}(\tangent{M}) \to E[-1]$ defined by
$\operatorname{at}^{\nabla}(X,Y)=\nabla^{E}_{X}\nabla^{E}_{Y}s - \nabla^{E}_{(\nabla^{M}_{X}Y)}s + \beta(X,Y)s.$
\end{remark}
\begin{proof}
Since $\At^{\nabla}$ is $\smooth{\cM}$-linear in both argument, it suffices to compute 
\[\At^{\nabla}(\iota_{a},\iota_{b}), \quad \At^{\nabla}(\iota_{a},\hat{X}), \quad \At^{\nabla}(\hat{X}, \hat{Y})\]
for $a,b\in \sections{E[-1]}$ and $X,Y\in \XX(M)$. A degree argument shows that $\At^{\nabla}(\iota_{a},\iota_{b})=0$ and  $\At^{\nabla}(\iota_{a},\hat{X})=0$.

By Proposition~\ref{prop:Connection} and Lemma~\ref{lem:LieBracket}, we have
\begin{align*}
\At^{\nabla}(\hat{X},\hat{Y}) &=[Q,\nabla_{\hat{X}}\hat{Y}]-\nabla_{[Q,\hat{X}]}\hat{Y} - \nabla_{\hat{X}}[Q,\hat{Y}]\\
&=[\iota_{s}, \widehat{\nabla^{M}_{X}Y}+ \iota_{\beta(X,Y)}] + \nabla_{\iota_{\nabla^{E}_{X}s}}\hat{Y} + \nabla_{\hat{X}}(\iota_{\nabla^{E}_{Y}s})\\
&=\iota_{ \beta(X,Y)s- \nabla^{E}_{(\nabla^{M}_{X}Y)}s} +0 + \iota_{\nabla^{E}_{X}\nabla^{E}_{Y}s}.
\end{align*}
 This concludes the proof.
\end{proof}

\subsubsection{Computation of the Atiyah class: local}\label{subsec:local}
We now compute the Atiyah class of $(E[-1],\iota_{s})$ in local coordinates. 
By Proposition~\ref{prop:Atiyah}, this completely determines the Atiyah class globally. 

We assume that $M=\RR^{n}$ and $E=\RR^{n}\times \RR^{m}$. 
We denote the coordinate functions on $M$ by $\{x^{1},\ldots, x^{n}\}$, the corresponding vector fields by $\{\partial_{1},\ldots,\partial_{n}\}$, and the corresponding $1$-forms by $\{dx^{1},\ldots, dx^{n}\}$. Moreover, $\{\epsilon_{1},\ldots, \epsilon_{m}\}$ and $\{\xi^{1},\ldots, \xi^{m}\}$ denote the standard frames of $E[-1]$ and $(E[-1])^{\vee}$, respectively. We define the section $s\in \sections{E}$ by 
\[s=s^{1} \epsilon_{1}+ \cdots + s^{m} \epsilon_{m}\] 
where $s^{i}\in \smooth{M}$.

In terms of the graded manifold $E[-1]$, the coordinate functions are $x^{1},\ldots,x^{n},\xi^{1},\ldots,\xi^{m}$, where $x^{i}$ is of degree $0$ and $\xi^{i}$ is of degree $-1$. Then, $\{\partial_{1},\ldots, \partial_{n}, \epsilon_{1},\ldots,\epsilon_{m}\}$ can be viewed as a basis of the graded $\smooth{E[-1]}$-module $\XX(E[-1])\cong \Der(\smooth{E[-1]})$ in such a way that
\[\partial_{i}(x^{k})=\delta_{i}^{k}, \quad \partial_{i}(\xi^{k})=0, \quad \epsilon_{j}(x^{k})=0, \quad \epsilon_{j}(\xi^{k})=\delta_{j}^{k}\]
where $\delta_{i}^{j}$ is the Kronecker delta. Note that the vector field $\partial_{i}$ is of degree $0$ and the vector field $\epsilon_{j}$ is of degree $1$. 
We denote the dual basis of the differential $1$-forms $\Omega(E[-1])$ by $\{dx^{1},\ldots,dx^{n},d\xi^{1},\ldots d\xi^{m}\}$. 
The degree of $dx^{i}$ is $0$ and that of $d\xi^{i}$ is $-1$.

\begin{remark}\label{rem:Free}
  In this local setting, it is clear from Eq.~\eqref{eq:1-2-1} and~\eqref{eq:1-2-0} that the degree $1$ and $0$ (in fact, all degrees) components of $\sections{\Hom(S^{2}(\tangent{\cM}),\tangent{\cM})}$ are both free $\smooth{M}$-modules. In particular, they admit an ordered basis.
\end{remark}

Consider the trivial affine connection $\nabla$ on $E[-1]$. It is straightforward to check that $\nabla$ is torsion-free. In terms of Proposition~\ref{prop:Connection}, $\nabla$ corresponds to the triple $(\nabla^{E},\nabla^{M}, \beta)$, where $\nabla^{E}$ and $\nabla^{M}$ are trivial connections and $\beta=0$. 

In this setting, the following lemma immediately follows from Lemma~\ref{lem:DecomposeLQ}, Corollary~\ref{cor:H1} and Proposition~\ref{prop:AtiyahCocycleGlobal}.
Note that in the lemma below, we use the identification
\[\sections{\Hom(S^{2}(\tangent{\cM}),\tangent{\cM})}\cong \sections{S^{2}(\cotangent{\cM})\tensor \tangent{\cM}}\]
for the graded manifold $\cM=E[-1]$.

\begin{lemma}\label{lem:AtLocal}
In the above setting, the coboundary operator $\liederivative{Q}=d_{1}+d_{2}+d_{3}$ in Lemma~\ref{lem:DecomposeLQ} satisfies
\begin{gather}
d_{1}(\xi^{l} \cdot dx^{i}\odot dx^{j}\tensor \epsilon_{k})=s^{l}\cdot dx^{i}\odot dx^{j}\tensor \epsilon_{k}, \label{eq:d1} \\
d_{2}(d\xi^{i}\tensor dx^{j}\tensor \epsilon_{k})=\sum_{p=1}^{n} \frac{\partial s^{i}}{\partial x^{p}} \cdot dx^{p}\odot dx^{j}\tensor \epsilon_{k},  \label{eq:d2}\\
d_{3}(dx^{i}\odot dx^{j}\tensor \partial_{k})=\sum_{p=1}^{m} \frac{\partial s^{p}}{\partial x^{k}}\cdot dx^{i}\odot dx^{j}\tensor \epsilon_{p}, \label{eq:d3}
\end{gather}
for all $i,j,k$,
and the element
\begin{equation}\label{eq:AtLocal}
\At^{\nabla}=\At=\sum_{i\leq j}\sum_{k}\frac{\partial^{2}s^{k}}{\partial x^{i}\partial x^{j}} \cdot dx^{i}\odot dx^{j}\tensor \epsilon_{k} \in \sections{S^{2}(\cotangent{M}) \tensor E[-1]} 
\end{equation}
represents the Atiyah class in the first cohomology group
\[H^{1}(\sections{S^{2}(\cotangent{\cM}) \tensor \tangent{\cM}}, \liederivative{Q}) \cong  \left. \sections{S^{2}(\cotangent{M}) \tensor E[-1]}  \right/ \langle\image(d_{1}), \image(d_{2}), \image(d_{3}) \rangle . \]
\end{lemma}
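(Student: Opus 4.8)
The plan is to derive everything from the explicit description of vector fields, affine connections, and the Atiyah cocycle in the local setting $M = \RR^n$, $E = \RR^n \times \RR^m$. First I would fix the trivial torsion-free connection $\nabla$, which corresponds to the triple $(\nabla^E, \nabla^M, \beta) = (\text{triv}, \text{triv}, 0)$ under Proposition~\ref{prop:Connection}, and write $s = \sum_k s^k \epsilon_k$. Since $\nabla^E$ is trivial, the horizontal lift is $\hat\partial_i = \partial_i$, and the covariant derivative $\nabla^E_{\partial_i} s$ is simply $\sum_k \frac{\partial s^k}{\partial x^i}\epsilon_k$, i.e. the componentwise partial derivative. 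The three formulas for $d_1, d_2, d_3$ are then just transcriptions of the bundle maps described abstractly in Lemma~\ref{lem:DecomposeLQ} into the chosen frames. For $d_1$ I would use that $\iota_s(\xi^l) = s^l$ by the definition of the interior product; applying this to the first tensor factor of $\xi^l \cdot dx^i \odot dx^j \otimes \epsilon_k$ yields \eqref{eq:d1}. For $d_2$, the bundle map $S^2(TM) \to E[-1]\otimes TM$ sends $\partial_p \odot \partial_j \mapsto \nabla^E_{\partial_p}s \otimes \partial_j + \nabla^E_{\partial_j}s\otimes \partial_p$; dualising and pairing against $d\xi^i \otimes dx^j \otimes \epsilon_k$ picks out the coefficient $\frac{\partial s^i}{\partial x^p}$, giving \eqref{eq:d2}. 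For $d_3$, composition with $\nabla^E s : TM \to E[-1]$, $\partial_k \mapsto \sum_p \frac{\partial s^p}{\partial x^k}\epsilon_p$, immediately produces \eqref{eq:d3}. Each of these is a routine unwinding of definitions, so I would present them compactly rather than belabour the index bookkeeping.

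Next I would compute the Atiyah cocycle. By Proposition~\ref{prop:AtiyahCocycleGlobal}, $\At^\nabla$ is determined by $\At^\nabla(\hat X, \hat Y) = \iota_{\nabla^E_X \nabla^E_Y s - \nabla^E_{\nabla^M_X Y}s + \beta(X,Y)s}$. With all structure trivial and $\beta = 0$, taking $X = \partial_i$, $Y = \partial_j$ gives $\nabla^E_{\partial_i}\nabla^E_{\partial_j}s = \sum_k \frac{\partial^2 s^k}{\partial x^i \partial x^j}\epsilon_k$, while the other two terms vanish. Under the identification \eqref{eq:1-2-1} this corresponds to the bundle map $\partial_i \odot \partial_j \mapsto \sum_k \frac{\partial^2 s^k}{\partial x^i \partial x^j}\epsilon_k$, which in the basis of $\sections{S^2(\cotangent M)\otimes E[-1]}$ is exactly \eqref{eq:AtLocal} — here the sum $\sum_{i\leq j}$ just enumerates a basis of $S^2(\cotangent M)$ without double-counting, and symmetry of mixed partials is what guarantees the expression is well-defined (consistent with $\At^\nabla$ lying in the symmetric subcomplex by Proposition~\ref{prop:AtiyahProperty}(1)). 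Finally, the description of the first cohomology group as the stated quotient is immediate from Corollary~\ref{cor:H1} specialised to this local setting, using the identification $\Hom(S^2(\tangent\cM),\tangent\cM) \cong S^2(\cotangent\cM)\otimes\tangent\cM$ noted before the lemma and the freeness observed in Remark~\ref{rem:Free}, which ensures the quotient is literally the module-theoretic quotient by the submodule generated by the three images.

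I do not expect a serious obstacle here; the only place demanding care is the index/frame matching in the formulas for $d_2$ and $d_3$ — in particular keeping track of which tensor slot carries the derivative and ensuring the symmetrisation in $d_2$ (the $X \odot Y \mapsto \nabla^E_X s\otimes Y + \nabla^E_Y s\otimes X$ term) is correctly reflected, and confirming that the evaluation pairings $\langle d\xi^i, \epsilon_k\rangle = \delta^i_k$, $\langle dx^p, \partial_k\rangle = \delta^p_k$ produce precisely the coefficients written. Since the paper explicitly flags that "the following lemma immediately follows from Lemma~\ref{lem:DecomposeLQ}, Corollary~\ref{cor:H1} and Proposition~\ref{prop:AtiyahCocycleGlobal}", I would keep the proof to a few lines: state that the trivial connection is torsion-free with the indicated triple, substitute into the three bundle maps and the cocycle formula of Proposition~\ref{prop:AtiyahCocycleGlobal}, and read off \eqref{eq:d1}--\eqref{eq:d3} and \eqref{eq:AtLocal}, then invoke Corollary~\ref{cor:H1} for the cohomology identification.
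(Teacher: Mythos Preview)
Your proposal is correct and follows exactly the approach the paper intends: the paper states just before the lemma that it ``immediately follows from Lemma~\ref{lem:DecomposeLQ}, Corollary~\ref{cor:H1} and Proposition~\ref{prop:AtiyahCocycleGlobal}'', and your plan is precisely to specialise those three results to the trivial connection on $M=\RR^n$, $E=\RR^n\times\RR^m$ and read off the formulas. The care you flag about index matching in $d_2$ and $d_3$ is appropriate but presents no real difficulty.
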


We point out that, under the identification $\sections{S^{2}(\cotangent{M}) \tensor E[-1]}\cong \sections{\Hom(S^{2}(\tangent{M}), E[-1])}$, Eq.~\eqref{eq:AtLocal} is equivalent to
\[\At^{\nabla}(\partial_{i}, \partial_{j})=\sum_{k}\frac{\partial^{2}s^{k}}{\partial x^{i}\partial x^{j}} \cdot \epsilon_{k}.\]

\section{Proof of Theorem~\ref{thm:Main}} \label{sec:Proof}
In this section, we prove Theorem~\ref{thm:Main}, which characterises the Atiyah class of the DG manifold $(E[-1],\iota_{s})$ in terms of the clean intersection of the section $s$ with the zero section.

By Proposition~\ref{prop:CleanIntersection}, Proposition~\ref{prop:Atiyah} and Corollary~\ref{cor:NotLocus}, it suffices to prove the following lemma to establish Theorem~\ref{thm:Main}.

\begin{lemma}\label{lem:SubMain}
Let $E\to M$ be a vector bundle, and let $s\in \sections{E}$ be a section.
Assume $\dim M = n$ and $\rank E = m$. The following are equivalent:
\begin{enumerate}[label=(\Alph*)]
\item \label{item:A} For each $p\in s^{-1}(0)$, there exists a neighbourhood $U\subset M$ of $p$ such that the Atiyah class of the DG manifold $(E|_{U}[-1],\iota_{s|_{U}})$ vanishes.
\item \label{item:B} For each $p\in s^{-1}(0)$, there exists a neighbourhood $U \subset M$ of $p$, a local coordinate chart $\phi:U\to \RR^{n}$, and a local frame $\{e_{1}, \ldots, e_{m}\}$ of $E$ over $U$ such that 
\[
s\circ \phi^{-1}(x_{1},\ldots, x_{n}) = x_{1} e_{1} + \cdots + x_{n} e_{r}
\]
for some $r$. 
\end{enumerate}
\end{lemma}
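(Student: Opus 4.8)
The plan is to establish the two implications separately. The implication $(B)\Rightarrow(A)$ is immediate: in the frame $\{e_1,\dots,e_m\}$ and chart $\phi$ of $(B)$ the section reads $s=x_1 e_1+\dots+x_r e_r$, so all of its second partial derivatives vanish, and by Lemma~\ref{lem:AtLocal} (equivalently Proposition~\ref{prop:AtiyahCocycleGlobal}) the Atiyah cocycle of the trivial connection associated with this frame and chart is identically zero; hence the Atiyah class of $(E|_U[-1],\iota_{s|_U})$ vanishes. All the work is in $(A)\Rightarrow(B)$.

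For $(A)\Rightarrow(B)$ I would first localise: fix $p\in s^{-1}(0)$ and a neighbourhood $U$ on which the Atiyah class vanishes, and after shrinking and trivialising assume $M=\RR^n$, $E=\RR^n\times\RR^m$, $p=0$, $s(0)=0$. By Lemma~\ref{lem:AtLocal} the hypothesis is exactly that $\At=\sum_{i\le j,k}\partial_i\partial_j s^k\, dx^i\odot dx^j\tensor \epsilon_k$ is a coboundary, $\At=d_1\eta_1+d_2\eta_2+d_3\eta_3$; read slot by slot this says that for every triple $(a,b,c)$ the function $\partial_a\partial_b s^c$ is a $\smooth{U}$-linear combination of the functions $s^l$ and $\partial_p s^l$, with coefficients assembled from $\eta_1,\eta_2,\eta_3$. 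Next I would put $s$ in prepared form: with $r=\rank Ds_0$, a linear change of coordinates and of frame arranges $\partial_j s^i(0)=\delta^j_i$ for $i\le r$ and $\partial_j s^i(0)=0$ for $i>r$, and then $\phi=(s^1,\dots,s^r,x^{r+1},\dots,x^n)$ (a diffeomorphism near $0$ by the inverse function theorem) brings $s$ to the form $s=(x_1,\dots,x_r,\tilde s^{r+1},\dots,\tilde s^m)$ with each $\tilde s^k$, $k>r$, vanishing to second order at $0$. Conditions (1) and (2) of Lemma~\ref{lem:Calculus} now hold, so it suffices to verify condition (3): $\tilde s^k\equiv 0$ on the slice $\Sigma=\{x_1=\dots=x_r=0\}$ for all $k>r$; then Lemma~\ref{lem:Calculus} produces the frame and chart of $(B)$.

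Condition (3) I would obtain from the slot identities by an ODE-uniqueness argument along rays inside $\Sigma$. Subtracting the linear part, the slot identity at a slot $(a,b,c)$ with $a,b,c>r$ takes the form $\partial_a\partial_b\tilde s^c=L^c_{ab}+G^c_{ab}$, where $L^c_{ab}$ is $\smooth{U}$-linear and homogeneous in the $\tilde s^l$ and $\partial_p\tilde s^l$ ($l>r$), and a direct check shows the inhomogeneous remainder satisfies $G^c_{ab}\in(x_1,\dots,x_r)$, so $G^c_{ab}$ vanishes on $\Sigma$. For a ray $t\mapsto tv\in\Sigma$ put $w(t)=(\tilde s^k,\,\partial_p\tilde s^k : k>r,\ p>r)$ evaluated at $tv$; then $w(0)=0$, and differentiating along the ray and substituting the slot identities should yield $\dot w(t)=\mathcal A(t)\,w(t)$ for a smooth matrix function $\mathcal A(t)$, whence $w\equiv 0$ and so $\tilde s^k|_\Sigma\equiv 0$. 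The step I expect to be the main obstacle is seeing that this system genuinely closes and stays linear: the slot identities at the all-$>r$ slots also involve the transverse derivatives $\partial_k\tilde s^c$ with $k\le r$, which are not components of $w$ and whose own evolution is not homogeneous. The way around it is to use the extra relations coming from the vanishing of the $\epsilon_k$-components of $\At$ for $k\le r$ — these vanish automatically since $s^k=x_k$ has zero Hessian — which force the coefficients $C^k_{ab}$ ($k\le r$, $a,b>r$) of $\eta_3$ to be $\smooth{U}$-linear in $(\tilde s^l,\partial_p\tilde s^l)_{l,p>r}$ on $\Sigma$; consequently the transverse derivatives enter the equation for $\dot w$ only multiplied by expressions linear in $w$, so $\dot w=\mathcal A(t)w$ remains linear in $w$ with smooth (possibly $\tilde s$-dependent) coefficients, and uniqueness for linear ODEs applies. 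This is precisely where the module-level vanishing of the Atiyah class — rather than its vanishing merely at $p$ — is used. The special case $r=0$ is a simplification of this argument: then $\Sigma=M$, the remainder $G$ already vanishes, and one concludes directly that $s\equiv 0$ near $p$.
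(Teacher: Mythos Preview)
Your proposal is correct and follows essentially the same route as the paper: after localising and putting $s$ in the prepared form~\eqref{eq:sr}, the paper likewise uses the vanishing of the $k\le r$ rows of $\At$ to absorb the transverse-derivative contributions (showing the $d_3$-coefficients $B^k_{ij}$, your $C^k_{ab}$, lie in the ideal generated by $x^1,\dots,x^r$, $s^l$, and the tangential $\partial_p s^l$ for $l>r$ --- this is Claim~\ref{claim1}), then runs the same ODE-uniqueness argument along rays in $\Sigma$ (Claim~\ref{claim2}) before invoking Lemma~\ref{lem:Calculus}. The only difference is organisational: the paper first restricts to $\Sigma$ and reduces to the case $r=0$ before writing down the ODE, which makes it transparent that the resulting system $\dot y = F_v(t,y)$ is genuinely linear in $y$ with coefficients depending only on $t$, so your parenthetical about ``possibly $\tilde s$-dependent'' coefficients is unnecessary.
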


The remainder of this section is devoted to proving Lemma~\ref{lem:SubMain}. One direction is straightforward.

\begin{proof}[Proof of \ref{item:B} $\Rightarrow$ \ref{item:A}]
Suppose that item~\ref{item:B} holds. Then the local coordinate chart $\phi$ induces a diffeomorphism $\psi:=\phi^{-1}|_{\phi(U)}: \phi(U)\to U$ which therefore induces an isomorphism of DG manifolds 
\[(E|_{U}[-1],\iota_{s})\cong (\psi^{\ast}E|_{\phi(U)}, \iota_{s\circ \psi}).\] 
Hence, the Atiyah class of $(E|_{U}[-1],\iota_{s})$ vanishes if and only if the Atiyah class of $(\psi^{\ast}E|_{\phi(U)}, \iota_{s\circ \psi})$ vanishes. 

Since the section $s\circ \psi$ is linear with respect to the given local frame, Eq.~\eqref{eq:AtLocal} implies that the Atiyah class vanishes. 
This concludes the proof of \ref{item:B} $\Rightarrow$ \ref{item:A}.
\end{proof}

It remains to prove \ref{item:A} $\Rightarrow$ \ref{item:B}. 
Since both item~\ref{item:A} and item~\ref{item:B} are local conditions, to prove \ref{item:A} $\Rightarrow$ \ref{item:B}, we may assume that item~\ref{item:A} holds for $U=M=\RR^{n}$.

For the remainder of this section, we adopt the notations from Section~\ref{subsec:local}. Moreover, for simplicity of notation, we use the following notation:
\[\partial_{i}s^{k}=\frac{\partial s^{k}}{\partial x^{i}},\qquad \partial_{ij}s^{k}=\frac{\partial^{2}s^{k}}{\partial x^{i}\partial x^{j}}.\]

We fix $p=0\in \RR^{n}=M$ and assume that $0=p\in s^{-1}(0)$. We denote $r=\rank Ds_{0}$ the rank of $Ds_{0}$ (see Eq.~\eqref{eq:Dsp} for the definition of $Ds_{p}$). By the inverse function theorem (cf. proof of Lemma~\ref{lem:Calculus}), we may assume that $s^{1}=x^{1}, \ldots, s^{r}=x^{r}$. Altogether, we have
\begin{equation}\label{eq:sr}
  s= x^{1}\epsilon_{1} + \cdots + x^{r} \epsilon_{r} + s^{r+1} \epsilon_{r+1} + \cdots + s^{m} \epsilon_{m}, \quad s^{j}(0)=0,
 \quad\partial_{i}s^{j}(0)=0,\quad \forall i,j > r .
\end{equation}

\subsection{Illustration of the proof}

While the general structure of the proof remains the same, considering the low-dimensional case simplifies indices and reduces potential confusion.
Accordingly, we first prove implication \ref{item:A} $\Rightarrow$ \ref{item:B} under the assumption $\dim M = \rank E = 2$ and $\rank Ds_{0}=1$, making easier for readers to follow the general argument.

\begin{proof}[Illustration of the proof of \ref{item:A} $\Rightarrow$ \ref{item:B}]
By Lemma~\ref{lem:Calculus}, it suffices to prove that $s(0,x_{2})\equiv 0$. Since $s=x^{1}\epsilon_{1}+s^{2}\epsilon_{2}$, it follows that $s(0,x_{2})\equiv 0$ if and only if $s^{2}(0,x_{2})\equiv 0$.

Before we assume the vanishing of the Atiyah class, we begin with some observations.

By Lemma~\ref{lem:AtLocal}, the Atiyah class is represented by 
\[\At=\sum_{i\leq j}\sum_{k}\partial_{ij} s^{k}\cdot dx^{i}\odot dx^{j}\tensor \epsilon_{k}.\]

By Remark~\ref{rem:Free}, there is an ordered basis of $\smooth{M}$-module $\sections{S^{2}\cotangent{M} \tensor E[-1]}$:
\[(dx^{1}\odot dx^{1}\tensor \epsilon_{1}, dx^{1}\odot dx^{2}\tensor \epsilon_{1}, dx^{2}\odot dx^{2}\tensor \epsilon_{1}, dx^{1}\odot dx^{1}\tensor \epsilon_{2}, dx^{1}\odot dx^{2}\tensor \epsilon_{2}, dx^{2}\odot dx^{2}\tensor \epsilon_{2}),\]
and similarly for each component in the RHS of~\eqref{eq:1-2-0}.

With respect to this basis, the Atiyah cocycle $\At$ can be written as a vector 
\[\At=
\begin{bmatrix}
\partial_{11}s^{1}\\
\partial_{12}s^{1}\\
\partial_{22}s^{1}\\
\partial_{11}s^{2}\\
\partial_{12}s^{2}\\
\partial_{22}s^{2}
\end{bmatrix}
\]
Moreover, with respect to the same basis, the maps $d_{1}, d_{2}, d_{3}$ in Lemma~\ref{lem:AtLocal} can also be written as matrices, such as:
\[
d_{1}=
\begin{bmatrix}
s^{1}&0&0&0&0&0&s^{2}&0&0&0&0&0\\
0&s^{1}&0&0&0&0&0&s^{2}&0&0&0&0\\
0&0&s^{1}&0&0&0&0&0&s^{2}&0&0&0\\
0&0&0&s^{1}&0&0&0&0&0&s^{2}&0&0\\
0&0&0&0&s^{1}&0&0&0&0&0&s^{2}&0\\
0&0&0&0&0&s^{1}&0&0&0&0&0&s^{2}
\end{bmatrix}
\]
\[
d_{2}=
\begin{bmatrix}
\partial_{1}s^{1}  & 0 & 0 & 0 &\partial_{1}s^{2}  & 0 & 0 & 0 \\
\partial_{2}s^{1} & \partial_{1}s^{1}  & 0 & 0 &\partial_{2}s^{2} & \partial_{1}s^{2}  & 0 & 0  \\
0 & \partial_{2}s^{1} & 0 & 0 &0 & \partial_{2}s^{2} & 0 & 0  \\
0 & 0 & \partial_{1}s^{1}  & 0 & 0 & 0 & \partial_{1}s^{2}  & 0 \\
0 & 0 & \partial_{2}s^{1} & \partial_{1}s^{1} & 0 & 0 & \partial_{2}s^{2} & \partial_{1}s^{2}   \\
0 & 0 & 0 & \partial_{2}s^{1}&0 & 0 & 0 & \partial_{2}s^{2} 
\end{bmatrix}
\]
and
\[
d_{3}=
\begin{bmatrix}
\partial_{1}s^{1} & \partial_{2}s^{1} & 0 & 0 & 0 & 0\\
0 & 0 & \partial_{1}s^{1} & \partial_{2}s^{1} & 0 & 0\\
0 & 0 & 0 & 0 & \partial_{1}s^{1} & \partial_{2}s^{1}\\
\partial_{1}s^{2} & \partial_{2}s^{2} & 0 & 0 & 0 & 0 \\
0 & 0 & \partial_{1}s^{2} & \partial_{2}s^{2} & 0 & 0\\
0 & 0 & 0 & 0 & \partial_{1}s^{2} & \partial_{2}s^{2}
\end{bmatrix} .
\]
Under condition~\eqref{eq:sr} with $r = 1$, the matrix $\At$ and $d_{2}$ take the simplified form shown below:
\begin{equation}\label{eq:BabyAtd2}
\At=
\begin{bmatrix}
0\\
0\\
0\\
\partial_{11}s^{2}\\
\partial_{12}s^{2}\\
\partial_{22}s^{2}
\end{bmatrix},
\qquad
d_{2}=
\begin{bmatrix}
1  & 0 & 0 & 0 &\partial_{1}s^{2}  & 0 & 0 & 0 \\
0 & 1  & 0 & 0 &\partial_{2}s^{2} & \partial_{1}s^{2}  & 0 & 0  \\
0 & 0 & 0 & 0 &0 & \partial_{2}s^{2} & 0 & 0  \\
0 & 0 & 1  & 0 & 0 & 0 & \partial_{1}s^{2}  & 0 \\
0 & 0 & 0 & 1 & 0 & 0 & \partial_{2}s^{2} & \partial_{1}s^{2}   \\
0 & 0 & 0 & 0 &0 & 0 & 0 & \partial_{2}s^{2} 
\end{bmatrix} .
\end{equation}
Note that the image of $d_{1}+ d_{2}+ d_{3}$ contains the standard basis vectors for all rows except the 3rd and 6th.  
In particular, there are no columns in $d_{1}, d_{2}, d_{3}$ with a single 1 either in the 3rd or 6th row and zeros elsewhere, so these rows cannot be cleared independently.
Instead, the entries in these rows appear only in conjunction with entries in other rows via nontrivial linear combinations.

Hence, if the Atiyah class vanishes, then the vector of $\At$ components corresponding to these rows must lie in the image (i.e., $\smooth{M}$-linear span) of these combined columns.  
Concretely, the vanishing of the Atiyah class implies that
\begin{equation}\label{eq:BabyReduced}
\begin{bmatrix}
0 \\
\partial_{22} s^{2}
\end{bmatrix}
\in \image 
\begin{bmatrix}
x^{1} & 0 & s^{2} & 0 & \partial_2 s^{2} & 0 & 1 & 0 \\
0 & x^{1} & 0 & s^{2} & 0 & \partial_2 s^{2} & \partial_1 s^{2} & \partial_2 s^{2}
\end{bmatrix}
\end{equation}
where the first 4 columns come from $d_{1}$, the next 2 from $d_{2}$, and the last 2 from $d_{3}$.

Now, assume that the Atiyah class vanishes. Then, by the observation above, there exists a set of functions $\{f_{1},\ldots, f_{8}\}$ on $M=\RR^{2}$  such that 
\begin{equation}\label{eq:2Rows}
\begin{bmatrix}
0\\
\partial_{22}s^{2}
\end{bmatrix}
=f_{1}
\begin{bmatrix}
x\\0
\end{bmatrix}
+f_{2}
\begin{bmatrix}
0\\x
\end{bmatrix}
+f_{3}
\begin{bmatrix}
s^{2}\\0
\end{bmatrix}
+f_{4}
\begin{bmatrix}
0\\s^{2}
\end{bmatrix}
+f_{5}
\begin{bmatrix}
\partial_{2}s^{2}\\0
\end{bmatrix}
+f_{6}
\begin{bmatrix}
0\\\partial_{2}s^{2}
\end{bmatrix}
+f_{7}
\begin{bmatrix}
1\\\partial_{1}s^{2}
\end{bmatrix}
+f_{8}
\begin{bmatrix}
0\\\partial_{2}s^{2}
\end{bmatrix} 
\end{equation}
holds.

By the first row of Eq.~\eqref{eq:2Rows}, we have
\[f_{7}=-f_{1}\cdot x^{1}-f_{3}\cdot s^{2}-f_{5} \cdot \partial_{2}s^{2},\] 
or equivalently, the function $f_{7}$ is a $\smooth{M}$-linear combination of the functions $x^{1}, s^{2}, \partial_{2}s^{2}$. 
Together with the second row of Eq.~\eqref{eq:2Rows}, the function $\partial_{22}s^{2}$ is a $\smooth{M}$-linear combination of the functions $x^{1}, s^{2}, \partial_{2}s^{2}$. That is, 
\begin{equation}\label{eq:222}
\partial_{22}s^{2}\in \langle x^{1},s^{2},\partial_{2}s^{2} \rangle ,
\end{equation}
where the symbol $\langle a,b,c \rangle $ denotes the ideal generated by functions $a,b,c\in \smooth{M}$.

When $x_{1}=0$, Eq.~\eqref{eq:222} implies that there exist functions $f, g\in \smooth{\RR}$ such that 
\begin{equation}\label{eq:ODE222}
\partial_{22}s^{2}(0,x_{2})=f(x_{2})\cdot s^{2}(0,x_{2}) + g(x_{2})\cdot \partial_{2}s^{2}(0,x_{2}).
\end{equation}

Consider an initial value problem in ODE in $1$ variable:
\begin{equation*}
  \begin{bmatrix}
  y_{1}\\ y_{2}
  \end{bmatrix}^{'}
  =
  \begin{bmatrix}
   y_{2}\\f\cdot y_{1}+g \cdot y_{2}
  \end{bmatrix}
\end{equation*}
with the initial value $(y_{1}(0),y_{2}(0))=(0,0)$.

Clearly, $(y_{1}(t),y_{2}(t))\equiv 0$ is a solution. 
On the other hand, by Eq.~\eqref{eq:ODE222} and by condition~\eqref{eq:sr} (for $n=m=2$ and $r=1$), there is another solution, namely $(y_{1}(t),y_{2}(t))=(s^{2}(0,t),\partial_{2}s^{2}(0,t))$. 
By the existence and uniqueness theorem (Picard--Lindelöf Theorem), we conclude that $s^{2}(0,t)\equiv 0$, hence completes the proof for $\dim M=2$, $\rank E=2$ and $\rank Ds_{0}=1$.
\end{proof}

\subsection{Proof of \ref{item:A} $\Rightarrow$ \ref{item:B}}
We prove \ref{item:A} $\Rightarrow$ \ref{item:B} of Lemma~\ref{lem:SubMain} in the general setting: $M=\RR^{n}$, $E=\RR^{n}\times \RR^{m}$ and $\rank Ds_{0}=r$---that is, condition~\eqref{eq:sr} holds.

In this general setting, we state a result analogous to~\eqref{eq:222}. 

\begin{claim}\label{claim1}
Under condition~\eqref{eq:sr}, assume that $\At\in \image(d_{1}+d_{2}+d_{3})$. Then, for each triple $(i,j,k)$ with $i,j,k >r$, we have
\[
\partial_{ij}s^{k} \in \langle x^{1},\cdots, x^{r}, s^{r+1},\ldots s^{m}, \partial_{i}s^{r+1},\cdots, \partial_{i}s^{m}, \partial_{j}s^{r+1},\cdots, \partial_{j}s^{m}, \partial_{r+1}s^{k},\cdots, \partial_{n}s^{k} \rangle .
\]
\end{claim}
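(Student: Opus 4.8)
The plan is to turn the hypothesis $\At\in\image(d_{1}+d_{2}+d_{3})$ into a finite system of scalar identities in $\smooth{M}$ and then eliminate the auxiliary unknowns, exactly as in the illustration but carrying the indices along. By Remark~\ref{rem:Free} every module in sight is free over $\smooth{M}$ and, since $Q=\iota_{s}$ is $\smooth{M}$-linear, so is $\liederivative{Q}=d_{1}+d_{2}+d_{3}$; hence I may fix functions realising a preimage, i.e. write $\At=d_{1}(A)+d_{2}(B)+d_{3}(C)$ with $A=\sum A^{ij,k}_{l}\,\xi^{l}\tensor dx^{i}\odot dx^{j}\tensor\epsilon_{k}$, $B=\sum B^{j,k}_{l}\,d\xi^{l}\tensor dx^{j}\tensor\epsilon_{k}$, $C=\sum C^{c}_{ij}\,dx^{i}\odot dx^{j}\tensor\partial_{c}$, all coefficients in $\smooth{M}$. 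Comparing the coefficient of the basis vector $dx^{i}\odot dx^{j}\tensor\epsilon_{k}$ in the two expressions for $\At$ given by \eqref{eq:AtLocal} and by $d_{1}(A)+d_{2}(B)+d_{3}(C)$, and using the explicit formulas \eqref{eq:d1}--\eqref{eq:d3}, yields for every $i\le j$ and every $k$ a relation of the form
\[\partial_{ij}s^{k}=\sum_{l}s^{l}\,A^{ij,k}_{l}+\sum_{l}\big(\partial_{i}s^{l}\,B^{j,k}_{l}+\partial_{j}s^{l}\,B^{i,k}_{l}\big)+\sum_{c}\partial_{c}s^{k}\,C^{c}_{ij},\]
the three sums coming from $d_{1}$, $d_{2}$, $d_{3}$ respectively. (A harmless normalisation constant may appear in the $d_{2}$-sum when $i=j$; since it is a unit it does not affect any ideal membership.)

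Fix now $i,j>r$. The next step exploits the rows $k'\le r$: by \eqref{eq:sr} we have $s^{k'}=x^{k'}$, so the left-hand side $\partial_{ij}s^{k'}$ vanishes, and the $d_{3}$-sum collapses because $\partial_{c}s^{k'}=\delta^{k'}_{c}$, leaving $\sum_{c}\partial_{c}s^{k'}\,C^{c}_{ij}=C^{k'}_{ij}$. The relation therefore reads
\[C^{k'}_{ij}=-\sum_{l}s^{l}\,A^{ij,k'}_{l}-\sum_{l}\big(\partial_{i}s^{l}\,B^{j,k'}_{l}+\partial_{j}s^{l}\,B^{i,k'}_{l}\big).\]
I then observe that the right-hand side lies in the ideal $I_{0}:=\langle x^{1},\ldots,x^{r},s^{r+1},\ldots,s^{m},\partial_{i}s^{r+1},\ldots,\partial_{i}s^{m},\partial_{j}s^{r+1},\ldots,\partial_{j}s^{m}\rangle$: split each sum according to whether the summation index is $\le r$ or $>r$; for $l\le r$ one has $s^{l}=x^{l}$ and, since $i>r\ge l$, also $\partial_{i}s^{l}=\partial_{i}x^{l}=0$ (and similarly for $j$), so the $l\le r$ part of the last two sums vanishes and everything that survives is an $\smooth{M}$-combination of the displayed generators. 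Hence $C^{k'}_{ij}\in I_{0}$ for every $k'\le r$.

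Finally I feed this back into the relation for a ``hard'' row, i.e. fix $i,j,k>r$. The $d_{1}$- and $d_{2}$-sums lie in $I_{0}$ by the very same splitting; in the $d_{3}$-sum $\sum_{c}\partial_{c}s^{k}\,C^{c}_{ij}$ the terms with $c\le r$ lie in $I_{0}$ because $C^{c}_{ij}\in I_{0}$, while the terms with $c>r$ lie in $\langle\partial_{r+1}s^{k},\ldots,\partial_{n}s^{k}\rangle$. Adding these up gives $\partial_{ij}s^{k}\in I_{0}+\langle\partial_{r+1}s^{k},\ldots,\partial_{n}s^{k}\rangle$, which is exactly the ideal in the statement, completing the proof. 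The only genuine difficulty --- the analogue of the observation in the illustration that the rows $dx^{i}\odot dx^{j}\tensor\epsilon_{k}$ with $i,j,k>r$ cannot be cleared by a single column of $d_{1}$, $d_{2}$ or $d_{3}$ --- is precisely what the middle step resolves: the $k'\le r$ equations pin down the cross-coefficients $C^{c}_{ij}$ modulo $I_{0}$, and without them the troublesome term $\sum_{c\le r}\partial_{c}s^{k}\,C^{c}_{ij}$ would be uncontrolled. Everything else is index bookkeeping together with the (unit-valued) $S^{2}$-normalisation in $d_{2}$.
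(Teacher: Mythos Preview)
Your proposal is correct and follows essentially the same route as the paper: you write a preimage $\At=d_{1}(A)+d_{2}(B)+d_{3}(C)$, extract the scalar identity in each basis direction $dx^{i}\odot dx^{j}\tensor\epsilon_{k}$, use the rows $k'\le r$ (where $\partial_{ij}s^{k'}=0$ and the $d_{3}$-sum collapses to $C^{k'}_{ij}$) to force $C^{k'}_{ij}\in I_{0}$, and then substitute back into the row $k>r$. The paper's proof is the same argument packaged slightly differently---it singles out only the $d_{3}$-preimage (your $C$, their $B$) and absorbs the $d_{1}$- and $d_{2}$-contributions into $\image(\pr^{ij}_{k}\circ(d_{1}+d_{2}))$ via Lemmas~\ref{lem:d22} and~\ref{lem:d13}---but the two-step elimination (first $k'\le r$, then $k>r$) and the resulting ideal are identical.
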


To prove Claim~\ref{claim1}, we first analyse the image of $\liederivative{Q}=d_{1}+d_{2}+d_{3}$ as characterised in Lemma~\ref{lem:AtLocal}.

As in the low-dimensional case, elements of the form $dx^{i}\odot dx^{j}\tensor \epsilon_{k}$, with $1\leq i\leq j \leq n$ and $1\leq k \leq m$ form a basis of $\smooth{M}$-module $\sections{S^{2}(\cotangent{M})\tensor E[-1]}$. Similarly, we have natural bases for the RHS of identification~\eqref{eq:1-2-0} .

The next two lemmas corresponds to $d_{2}$ in~\eqref{eq:BabyAtd2} from the low-dimensional case.

\begin{lemma}\label{lem:d21}
If either $i\leq r$ or $j \leq r$ holds, then 
\[dx^{i}\odot dx^{j}\tensor \epsilon_{k} \in \image(d_{2})\subset \sections{S^{2}(\cotangent{M})\tensor E[-1]}.\]
\end{lemma}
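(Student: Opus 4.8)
The plan is to exhibit, for each pair $(i,j)$ with $\min(i,j)\le r$ and each $k$, an explicit preimage under $d_2$, working from the formula \eqref{eq:d2}, namely $d_{2}(d\xi^{a}\tensor dx^{b}\tensor \epsilon_{k})=\sum_{p=1}^{n}(\partial_{p}s^{a})\, dx^{p}\odot dx^{b}\tensor \epsilon_{k}$. Without loss of generality assume $i\le r$; then by condition~\eqref{eq:sr} we have $s^{i}=x^{i}$, so $\partial_{p}s^{i}=\delta_{p}^{i}$. Hence $d_{2}(d\xi^{i}\tensor dx^{j}\tensor \epsilon_{k})=\sum_{p}\delta_{p}^{i}\, dx^{p}\odot dx^{j}\tensor \epsilon_{k}=dx^{i}\odot dx^{j}\tensor \epsilon_{k}$, which is exactly the claimed generator. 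This is the heart of the argument: because the first $r$ components of $s$ are coordinate functions, the corresponding "diagonal" generators $d\xi^{i}\tensor dx^{j}\tensor \epsilon_{k}$ map cleanly onto basis vectors.

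First I would state the normalization $s^{1}=x^{1},\dots,s^{r}=x^{r}$ coming from \eqref{eq:sr} and recall the formula for $d_2$ from Lemma~\ref{lem:AtLocal}. Then I would split into the two symmetric cases $i\le r$ and $j\le r$; in the case $i\le r$ the computation above applies verbatim, and in the case $j\le r$ one instead uses the element $d\xi^{j}\tensor dx^{i}\tensor \epsilon_{k}$, noting that $dx^{i}\odot dx^{j}=dx^{j}\odot dx^{i}$ by graded-symmetry of $S^{2}(\cotangent{M})$ (and these are degree-zero one-forms, so there is no sign). Finally I would remark that this covers every $k$ with $1\le k\le m$, so the full set of such generators lies in $\image(d_{2})$.

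I do not expect any serious obstacle here: the statement is essentially a bookkeeping consequence of the explicit form of $d_2$ together with the normalization $s^{i}=x^{i}$ for $i\le r$. The only point requiring a little care is to make sure the graded-symmetric product $dx^{i}\odot dx^{j}$ is handled symmetrically in the two cases and that one does not inadvertently assume $i\le j$; writing out both cases explicitly resolves this. This lemma is then used, together with its companion addressing the indices $i,j>r$, to pin down which coordinate rows of the matrix of $d_{1}+d_{2}+d_{3}$ can be cleared independently, which in turn is what yields Claim~\ref{claim1}.
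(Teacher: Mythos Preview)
Your proposal is correct and follows exactly the same approach as the paper: use condition~\eqref{eq:sr} to get $s^{i}=x^{i}$ for $i\le r$, then apply the formula~\eqref{eq:d2} to see that $d_{2}(d\xi^{i}\tensor dx^{j}\tensor \epsilon_{k})=dx^{i}\odot dx^{j}\tensor \epsilon_{k}$, and handle the case $j\le r$ by symmetry. The paper's proof is slightly terser but identical in substance.
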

\begin{proof}
  By condition~\eqref{eq:sr}, if $i\leq r$, then $s^{i}=x^{i}$. Then, by Eq.~\eqref{eq:d2}, we have 
  \[d_{2}(d\xi^{i}\tensor dx^{j}\tensor \epsilon_{k})=dx^{i}\odot dx^{j}\tensor \epsilon_{k}.\] 
Similarly, if $j\leq r$, the same result holds by exchanging the roles of $i$ and $j$.
\end{proof}

Together with Lemma~\ref{cor:H1}, this implies that to determine whether the Atiyah class vanishes, it suffices to consider the coefficients of the elements $dx^{i}\odot dx^{j}\tensor \epsilon_{k}$ with $r<i\leq j \leq n$.

We continue our examination of the image of $d_{2}$. 

Let
\[\pr^{ij}_{k}: \sections{S^{2}(\cotangent{M})\tensor E[-1]} \to \smooth{M}\cdot dx^{i}\odot dx^{j}\tensor \epsilon_{k} \cong \smooth{M}\]
denote the natural projection onto the coefficient of $dx^{i}\odot dx^{j}\tensor \epsilon_{k}$.

\begin{lemma}\label{lem:d22}
For each $r<i \leq j \leq n$,
the image of the composition map $\pr^{ij}_{k}\circ d_{2}$ is the ideal of $\smooth{M}$ generated by $\partial_{i}s^{a}, \partial_{j}s^{a}$ for $a>r$. That is,
\[\image(\pr^{ij}_{k}\circ d_{2}) = \langle \partial_{i} s^{r+1}, \ldots, \partial_{i}s^{m}, \partial_{j}s^{r+1}, \ldots, \partial_{j}s^{m}\rangle \subset \smooth{M} . \]
\end{lemma}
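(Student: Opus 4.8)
The plan is to compute $\image(\pr^{ij}_k\circ d_2)$ directly from the explicit formula \eqref{eq:d2} for $d_2$ together with the normalisation \eqref{eq:sr}, exploiting that both $d_2$ and $\pr^{ij}_k$ are $\smooth{M}$-linear. Since $d_2$ is $\smooth{M}$-linear (Lemma~\ref{lem:DecomposeLQ}) and its source $\sections{\Hom(E[-1]\tensor\tangent{M},E[-1])}$ is a free $\smooth{M}$-module with basis the elements $d\xi^a\tensor dx^b\tensor\epsilon_c$, $1\le a,c\le m$, $1\le b\le n$ (cf.\ Remark~\ref{rem:Free}), the $\smooth{M}$-submodule $\image(\pr^{ij}_k\circ d_2)\subset\smooth{M}$ is the ideal generated by the values $\pr^{ij}_k\bigl(d_2(d\xi^a\tensor dx^b\tensor\epsilon_c)\bigr)$. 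So it suffices to evaluate these on the basis.

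First I would apply \eqref{eq:d2}, which gives $d_2(d\xi^a\tensor dx^b\tensor\epsilon_c)=\sum_{p=1}^n \partial_p s^a\cdot dx^p\odot dx^b\tensor\epsilon_c$. Fixing $r<i\le j\le n$, I read off the coefficient of $dx^i\odot dx^j\tensor\epsilon_k$: it vanishes unless $c=k$ and $b\in\{i,j\}$, and in the remaining cases it equals $\partial_i s^a$ (when $c=k$, $b=j$, $i<j$), $\partial_j s^a$ (when $c=k$, $b=i$, $i<j$), or a nonzero multiple of $\partial_i s^a$ (when $c=k$ and $i=j=b$; the scalar depends only on the convention chosen for $\odot$ and is irrelevant for the generated ideal). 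Hence $\image(\pr^{ij}_k\circ d_2)=\langle \partial_i s^a,\partial_j s^a : 1\le a\le m\rangle$.

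It then remains to discard the indices $a\le r$. By \eqref{eq:sr} we have $s^a=x^a$ for $a\le r$, so $\partial_i s^a=\delta_i^a$ and $\partial_j s^a=\delta_j^a$, both of which vanish since $i,j>r\ge a$. Therefore only the indices $a>r$ contribute nontrivially, and $\image(\pr^{ij}_k\circ d_2)=\langle \partial_i s^{r+1},\ldots,\partial_i s^m,\partial_j s^{r+1},\ldots,\partial_j s^m\rangle$, as claimed.

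I do not expect a serious obstacle: the statement is essentially bookkeeping built on the formula for $d_2$. The only points needing care are the harmless scalar factor coming from the symmetric-product convention in the diagonal case $i=j$, and making sure the $\smooth{M}$-linearity reduction is applied to the correct free module — which is exactly what Remark~\ref{rem:Free} (or the triviality of the bundles over $\RR^n$) provides.
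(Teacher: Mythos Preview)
Your proposal is correct and follows essentially the same route as the paper: both evaluate $\pr^{ij}_k\circ d_2$ on the free $\smooth{M}$-basis $d\xi^a\tensor dx^b\tensor\epsilon_c$ using \eqref{eq:d2}, obtain the generators $\partial_i s^a,\partial_j s^a$, and then use \eqref{eq:sr} to kill the terms with $a\le r$. Your treatment is slightly more explicit about the $\smooth{M}$-linearity reduction and the diagonal case $i=j$, but there is no substantive difference.
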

\begin{proof}
Note that, by Eq.~\eqref{eq:d2}, for any triple $(i,j,k)$ with $i\leq j$, we have
\[
\pr^{ij}_{k}\circ d_{2}(d\xi^{a}\tensor dx^{b}\tensor \epsilon_{c})=
\begin{cases}
\partial_{i}s^{a}  & \text{if } (b,c)=(j,k)\\
\partial_{j}s^{a}  & \text{if } (b,c)=(i,k)\\
0 & \text{otherwise.}
\end{cases}
\]
This implies that $\image(\pr^{ij}_{k}\circ d_{2})$ is generated by $\partial_{i}s^{a}$ and $\partial_{j}s^{a}$ for all $a$. However, by condition~\eqref{eq:sr}, we have $s^{a}=x^{a}$ for $a\leq r$. Then the condition $i,j>r$ implies that 
\[\partial_{i}s^{a}=\partial_{j}s^{a}=0,\]
thus this completes the proof.
\end{proof}

The following lemma is an easy consequence of Lemma~\ref{lem:AtLocal} and condition~\eqref{eq:sr}. 
\begin{lemma}\label{lem:d13}
For each $r<i\leq j \leq n$, the following hold:
\begin{enumerate}
\item The Atiyah cocycle $\At$ satisfies
\[\pr^{ij}_{k}(\At)=\begin{cases}
0 &\text{if } k\leq r ,\\
\partial_{ij}s^{k} & \text{if } k>r .
\end{cases}
\]
\item For any $k$, we have
\[ \image(\pr^{ij}_{k}\circ d_{1}) =\langle x^{1},\ldots,x^{r},s^{r+1},\ldots,s^{m} \rangle .\]
\item For any triple $(a,b,c)$ with $a<b$, the composition $\pr^{ij}_{k}\circ d_{3}$ satisfies
\[ \pr^{ij}_{k}\circ d_{3}(dx^{a}\odot dx^{b}\tensor \partial_{c})=\begin{cases}
  1 & \text{if } (a,b,c)=(i,j,k) \text{ and } k\leq r ,\\
\partial_{c}s^{k}  & \text{if } (a,b)=(i,j) \text { and } k>r ,\\
0 & \text{otherwise}.
\end{cases}
\]
\end{enumerate}
\end{lemma}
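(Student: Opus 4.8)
The plan is to derive all three assertions directly from the explicit local descriptions in Lemma~\ref{lem:AtLocal}, simplifying with condition~\eqref{eq:sr}, whose only relevant content here is that $s^{a}=x^{a}$ for $a\leq r$. The key preliminary observation is that the elements $dx^{i}\odot dx^{j}\tensor\epsilon_{k}$ with $1\leq i\leq j\leq n$ and $1\leq k\leq m$ form an $\smooth{M}$-basis of $\sections{S^{2}(\cotangent{M})\tensor E[-1]}$, so $\pr^{ij}_{k}$ is nothing but the map extracting the coefficient of one fixed basis vector; each claim then reduces to reading off a single coefficient from \eqref{eq:d1}, \eqref{eq:d3}, or \eqref{eq:AtLocal}. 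The standing hypothesis $r<i\leq j\leq n$ is merely the range in which the lemma will later be applied and is inessential here---the identities hold verbatim for all $i\leq j$.

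For part~(1) I would apply $\pr^{ij}_{k}$ to \eqref{eq:AtLocal} to get $\pr^{ij}_{k}(\At)=\partial_{ij}s^{k}$, and then use $s^{k}=x^{k}$ when $k\leq r$ to see that this second derivative vanishes, leaving $\partial_{ij}s^{k}$ when $k>r$. For part~(2), \eqref{eq:d1} shows that $\pr^{ij}_{k}\circ d_{1}$ carries the basis element $\xi^{l}\cdot dx^{i}\odot dx^{j}\tensor\epsilon_{k}$ to $s^{l}$ and every other basis element to $0$; as $d_{1}$ is $\smooth{M}$-linear, its composite with $\pr^{ij}_{k}$ has image the ideal $\langle s^{1},\ldots,s^{m}\rangle$, which the substitution $s^{l}=x^{l}$ ($l\leq r$) rewrites as $\langle x^{1},\ldots,x^{r},s^{r+1},\ldots,s^{m}\rangle$. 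For part~(3), \eqref{eq:d3} gives that the coefficient of $dx^{i}\odot dx^{j}\tensor\epsilon_{k}$ in $d_{3}(dx^{a}\odot dx^{b}\tensor\partial_{c})$ equals $\partial_{c}s^{k}$ if $(a,b)=(i,j)$ and $0$ otherwise; if in addition $k\leq r$ then $\partial_{c}s^{k}=\partial_{c}x^{k}=\delta^{k}_{c}$, which equals $1$ exactly when $(a,b,c)=(i,j,k)$, whereas for $k>r$ the coefficient stays $\partial_{c}s^{k}$. Assembling these cases produces the stated piecewise formula.

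I do not expect any genuine obstacle: the whole argument is careful index bookkeeping. The only points that warrant mild attention are the symmetrisation convention for $dx^{i}\odot dx^{j}$ (so that no spurious factor of $2$ slips in when $i=j$) and consistently distinguishing the linear indices $\{1,\ldots,r\}$, on which $s^{a}=x^{a}$, from the complementary range; I would also keep the indexing conventions aligned with those of Lemmas~\ref{lem:d21} and~\ref{lem:d22}, since this lemma will be combined with them to establish Claim~\ref{claim1}.
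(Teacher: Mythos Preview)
Your proposal is correct and follows exactly the paper's approach: the paper's own proof is a one-line appeal to condition~\eqref{eq:sr} (i.e., $s^{k}=x^{k}$ for $k\leq r$) together with direct computation from equations~\eqref{eq:AtLocal}, \eqref{eq:d1}, and~\eqref{eq:d3}, which is precisely what you spell out. Your more detailed bookkeeping (including the observation that the hypothesis $r<i\leq j$ is not actually used) is accurate and simply fills in what the paper leaves implicit.
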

\begin{proof}
By condition~\eqref{eq:sr}, we have $s^{k}=x^{k}$ for $k\leq r$. The first item holds by Eq.~\eqref{eq:AtLocal}, and the second and third item follows from direct computation via Eq.~\eqref{eq:d1} and~\eqref{eq:d3}, respectively.
\end{proof}

\begin{remark}
The lemma above corresponds to~\eqref{eq:BabyReduced} in the low-dimensional case: the first item corresponds to the LHS, the second item corresponds to the first 4 columns of the matrix on the RHS, and the third item to the last 2 columns.
\end{remark}

Let us now prove Claim~\ref{claim1}.
\begin{proof}[Proof of Claim~\ref{claim1}]
  The $\smooth{M}$-module $\sections{\Hom(S^{2}(\tangent{M}), \tangent{M})}$ has a basis consisting of elements of the form
  \[ dx^{i}\odot dx^{j}\tensor \partial_{k}.\]
  Since $\At \in \image(d_{1}+d_{2}+d_{3})$, there exists 
  \[B \in \sections{\Hom(S^{2}(\tangent{M}), \tangent{M})}\]
  such that, for all $1\leq i\leq j \leq n$ and $1\leq k \leq m$,  
\begin{equation}\label{eq:d3B}
\pr_{ij}^{k}(\At-d_{3}(B)) \in \image(\pr_{ij}^{k}\circ (d_{1} + d_{2})).
\end{equation}
By Lemma~\ref{lem:d21}, it suffices to consider the case where $r<i\leq j \leq n$. 
Such a section $B$ can be written in the form
\[B=\sum_{a\leq b}\sum_{c} B_{ab}^{c} \cdot dx^{a}\odot dx^{b}\tensor \partial_{c}\]
for $B_{ab}^{c}\in \smooth{M}$.
By Lemma~\ref{lem:d22} and~\ref{lem:d13}, together with condition~\eqref{eq:d3B}, we obtain the following:
\begin{enumerate}
  \item For $k\leq r$,
\[B_{ij}^{k} \in \langle x^{1},\ldots,x^{r},s^{r+1},\ldots,s^{m}, \partial_{i}s^{r+1},\ldots,\partial_{i}s^{m},\partial_{j}s^{r+1},\ldots,\partial_{j}s^{m} \rangle .\]
  \item For $k>r$, we have 
\[ \partial_{ij}s^{k}-\sum_{c=1}^{n} B_{ij}^{c}\cdot \partial_{c}s^{k} \in \langle x^{1},\ldots,x^{r},s^{r+1},\ldots,s^{m}, \partial_{i}s^{r+1},\ldots,\partial_{i}s^{m},\partial_{j}s^{r+1},\ldots,\partial_{j}s^{m} \rangle .\]
\end{enumerate}
Combining these, for $k>r$, 
\[\partial_{ij}s^{k} \in \sum_{c=r+1}^{n} B_{ij}^{c}\cdot \partial_{c}s^{k} +\langle x^{1},\ldots,x^{r},s^{r+1},\ldots,s^{m}, \partial_{i}s^{r+1},\ldots,\partial_{i}s^{m},\partial_{j}s^{r+1},\ldots,\partial_{j}s^{m} \rangle.\] 
This establishes Claim~\ref{claim1}.
\end{proof}

The following claim follows from the uniqueness of initial value problem for ODEs.
\begin{claim}\label{claim2}
Assume that condition~\eqref{eq:sr} and Claim~\ref{claim1} holds. 
Then $s(x_{1},\ldots, x_{n})=0$ whenever $x_{1}=\ldots=x_{r}=0$.
\end{claim}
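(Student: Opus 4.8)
The plan is to reduce Claim~\ref{claim2} to a uniqueness statement for a linear system of ODEs, as in the low-dimensional illustration, but with the first-order partial derivatives of $s$ promoted to additional unknowns so that the system closes even when the slice $\{x_1=\cdots=x_r=0\}$ has dimension greater than one.

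First I would restrict everything to the affine slice $N=\{x_1=\cdots=x_r=0\}$; after replacing the domain by a convex neighbourhood of $0$, the segment from $0$ to any point of $N$ stays in $N$. By condition~\eqref{eq:sr} we have $s^k=x^k$ for $k\le r$, so $s$ vanishes on $N$ if and only if the functions $u^k:=s^k|_N$ vanish on $N$ for every $k>r$; this is what must be shown. Restricting the ideal membership of Claim~\ref{claim1} to $N$ annihilates the generators $x^1,\dots,x^r$ and identifies $s^a|_N=u^a$ and $\partial_i s^a|_N=\partial_i u^a$ for the directions $i>r$ tangent to $N$, so that for all $i,j,k>r$ the function $\partial_{ij}u^k$ lies in the ideal of $\smooth{N}$ generated by the $u^a$, the $\partial_i u^a$, the $\partial_j u^a$ (for $a>r$), and the $\partial_c u^k$ (for $c>r$). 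From~\eqref{eq:sr} I also record the initial data $u^k(0)=0$ and $\partial_i u^k(0)=0$ for all $i,k>r$.

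Next, fix $q\in N$ and consider the ray $t\mapsto tq$, $t\in[0,1]$, which lies in $N$ by convexity. Set $v^k(t)=u^k(tq)$ and $w^k_i(t)=(\partial_i u^k)(tq)$ for $r<i\le n$ and $r<k\le m$. Differentiating along the ray gives $\dot v^k=\sum_{i>r}q_i\,w^k_i$ and $\dot w^k_i=\sum_{j>r}q_j\,(\partial_{ij}u^k)(tq)$; substituting the expression for $\partial_{ij}u^k$ from the previous paragraph turns each right-hand side into a linear combination, with coefficients smooth in $t$, of the $v^a(t)$ and the $w^a_i(t),w^a_j(t),w^k_c(t)$. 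Hence the vector $Y=(v^k,w^k_i)$ solves a homogeneous linear system $\dot Y=A(t)Y$ with $A$ continuous on $[0,1]$, and the recorded initial data gives $Y(0)=0$. By uniqueness of solutions of linear ODEs (Picard--Lindel\"of), $Y\equiv 0$; in particular $u^k(q)=v^k(1)=0$, and since $q\in N$ was arbitrary, $s|_N=0$, which is Claim~\ref{claim2}.

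The genuinely delicate point --- and the reason the argument cannot be run with $v^k$ and $\dot v^k$ alone, as in the one-dimensional case --- is that $\dot v^k$ records only a single directional derivative of $u^k$, so one must carry every $w^k_i$ as an independent unknown. It is precisely the list of generators in Claim~\ref{claim1} (the $u^a$, the $\partial_i u^a$, the $\partial_j u^a$, and the $\partial_c u^k$) that makes the enlarged system closed and linear, with no uncontrolled transverse derivatives entering the equations for $\dot w^k_i$; verifying this bookkeeping, together with the reduction to a convex domain so that the rays remain in $N$, is the main thing to check.
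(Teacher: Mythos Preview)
Your proposal is correct and follows essentially the same route as the paper's proof: restrict to the slice $\{x_1=\cdots=x_r=0\}$ (the paper phrases this as reducing to $r=0$), parametrize the slice by rays $t\mapsto tq$, enlarge the unknowns to include all first partials $(\partial_i s^k)(tq)$ so that Claim~\ref{claim1} closes the system, and conclude by uniqueness for the resulting ODE with zero initial data. The only cosmetic differences are that the paper argues by contradiction and writes the ODE as $y'=F_v(t,y)$ without explicitly noting linearity, whereas you observe directly that the system is linear; also, your convexity reduction is unnecessary here since the ambient slice is already $\RR^{n-r}$.
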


\begin{proof}[Proof of Claim~\ref{claim2}]
Let $\widetilde{s}=s|_{\{0\}^{r}\times \RR^{n-r}}$. Then conclusion of Claim~\ref{claim2} is equivalent to $\widetilde{s}=0$. Since the assumptions on $s$ induce the same condition on $\widetilde{s}$ with $r=0$, it suffices to prove the claim in the case $r=0$.

Suppose, for contradiction, that there exists a vector $v=(v_{1},\ldots ,v_{n})\in \RR^{n}$ such that $s^{k}(v)\neq 0$ for some $k$. We will show that this contradicts to the uniqueness of initial value problem for ODEs (i.e., Picard--Lindelöf theorem).

Define a path $y_{v}:\RR\to \RR^{m+nm}$ by 
\[
  y_{v}(t)=(s^{1}(tv),\ldots, s^{m}(tv),\partial_{1}s^{1}(tv),\ldots,\partial_{1}s^{m}(tv),\partial_{2}s^{1}(tv),\ldots,\partial_{n}s^{m}(tv)).
\]
By condition~\eqref{eq:sr}, we have 
\begin{equation}\label{eq:IV}
  y_{v}(0)=0, \qquad \frac{d}{dt}y_{v}(0)=0. 
\end{equation}
Moreover, by Claim~\ref{claim1}, there exists a smooth map 
\[F_{v}:\RR^{1+m+nm} \to \RR^{m+nm}\]
such that
\begin{equation}\label{eq:ODE}
  \frac{d}{dt}y_{v}(t)=F_{v}(t,y_{v}(t)).
\end{equation}
Indeed, $F_{v}$ can be explicitly written as follows. If we write
\[ \partial_{ij}s^{k}=\sum_{a}f_{ij,a}^{k} \cdot s^{a} + \sum_{b} (g_{ij,b}^{k} \cdot \partial_{i}s^{b}+\hat{g}_{ij,b}^{k}\cdot \partial_{j}s^{b}) + \sum_{c} h_{ij}^{k,c} \cdot \partial_{c}s^{k}\]
for some $f_{ij,a}^{k}, g_{ij,b}^{k}, \hat{g}_{ij,b}^{k}, h_{ij}^{k,c}\in \smooth{\RR^{n}}$, then $F_{v}$ is given by
\[F_{v}(t, p_{1},\ldots, p_{m},q_{11},\ldots, q_{1m},q_{21},\ldots, q_{nm})=(z_{1},\ldots, z_{m},w_{11},\ldots, w_{1m},\ldots, w_{21},\ldots, w_{nm})\]
where
\begin{gather*}
z_{\beta}=\sum_{\alpha} v_{\alpha} \cdot q_{\alpha \beta} ,\\
w_{\beta \gamma} = \sum_{\alpha}v_{\alpha} \Big( \sum_{a}f_{\alpha \beta,a}^{\gamma}\cdot p_{a} + \sum_{b}(g_{\alpha \beta, b}^{\gamma}\cdot q_{\alpha b}+ \hat{g}_{\alpha \beta, b}^{\gamma}\cdot q_{\beta b})+\sum_{c}h_{\alpha \beta}^{\gamma, c} \cdot q_{c \gamma}\Big) .
\end{gather*}

By equations~\eqref{eq:ODE} and~\eqref{eq:IV}, $y_{v}(t)$ solves the initial value problem
\[ y'(t)=F_{v}(t, y(t)),\qquad y(0)=y'(0)=0. \]
However, the identically zero function $y(t)\equiv 0$ is also a solution to this system. By the uniqueness of solutions to the initial value problem, we must have $y_{v}(t)\equiv 0$. This contradicts to the assumption that $s^{k}(v)\neq 0$, completing the proof of Claim~\ref{claim2}.
\end{proof}

Finally, we prove \ref{item:A} $\Rightarrow$ \ref{item:B} of Lemma~\ref{lem:SubMain} in full generality.

\begin{proof}[Proof of \ref{item:A}$\Rightarrow$ \ref{item:B}]
Assume that item~\ref{item:A} holds for $M$. By shrinking the neighbourhood $U$ if necessary, we may choose a local coordinate chart $\phi:U\to \RR^{n}$ such that condition~\eqref{eq:sr} is satisfied for $s\circ \phi^{-1}$. For the rest of the proof, we identify $s$ with $s\circ \phi^{-1}$.

By Lemma~\ref{lem:AtLocal}, the Atiyah class vanishes if and only if $\At\in \image(d_{1}+d_{2}+d_{3})$. Then, by Claim~\ref{claim1} and~\ref{claim2}, the section $s:M\to E$, when restricted to $\{0\}^{r}\times \RR^{n-r}$, vanishes. 

Together with condition~\eqref{eq:sr}, this implies that $s$ satisfies the hypothesis of Lemma~\ref{lem:Calculus}, completing the proof of \ref{item:A} $\Rightarrow$ \ref{item:B}.
\end{proof}

\section{The case of derived intersections}
Given two embedded submanifolds $X$ and $Y$ of a smooth manifold $W$, there exists a DG manifold of amplitude $+1$, namely $(\cM_{XY},Q)$, which may be viewed as the derived intersection $X\cap^{h}Y$ of $X$ and $Y$---see \cite{MR4735657} for details. We briefly recall its construction below.

To construct a DG manifold $(\cM_{XY},Q)=(E[-1],\iota_{s})$ of amplitude $+1$, it suffices to construct a vector bundle $E\to M$ and a section $s:E\to M$. We begin with the construction of the base manifold $M$.

Let $\pi:\tangent{W}\to W$ denote the bundle projection. Given an affine connection $\nabla$ on $W$, each tangent vector $v \in \tangent{W}$ induces a unique geodesic
\[\gamma_{v}: I_{v}\to W\]
defined on an open interval $I_{v}\subset \RR$ containing $0$, such that $\gamma_{v}(0)=\pi(v)$ and $\dot{\gamma}_{v}(0)=v$. Define 
\[P_{W}:=\{v\in \tangent{W} : 1\in I_{v}\},\]
that is, $P_{W}$ consists of those tangent vectors whose associated geodesics are defined at time $t=1$. 
These geodesics are referred to as short geodesics. 
Hence $P_{W}$ is an open submanifold of $\tangent{W}$. We define the base manifold $M$ by
\[M := \{ v\in P_{W} : \gamma_{v}(0)\in X, \quad \gamma_{v}(1)\in Y\}.\]
The manifold $M$ can be viewed as the space of short geodesics in $W$, namely those $\gamma:[0,1]\to W$ such that $\gamma(0)\in X$ and $\gamma(1)\in Y$.

Next, we construct a vector bundle $E$ over $M$. By abuse of notation, we denote the composition $M\into \tangent{W} \xto{\pi} W$ by the same symbol $\pi$.
Then 
\[E:=\pi^{\ast}\tangent{W}\cong M\times_{W} \tangent{W}\] 
is defined as the pullback of $\tangent{W}$ along $\pi$. 
Geometrically, $E$ can be viewed as a space of covariantly constant vector fields along short geodesics $\gamma \in M$. 

Finally, the section $s:M\to E$ is defined by 
\[s(v)=(v,v)\in M\times_{W} \tangent{W} \cong E\]
for any $v\in M \subset \tangent{W}$. In terms of short geodesics, the section $s$ is characterised by $s(\gamma)=\dot{\gamma}$ for short geodesics $\gamma \in M$. 

Combining these components, we obtain a DG manifold $(\cM_{XY},Q)=(E[-1],\iota_{s})$ arising from a pair of embedded submanifolds $X$ and $Y$ of $W$. We note that, by \cite[Theorem~3.4]{MR4735657}, this DG manifold is canonically defined, up to isomorphism---that is, independent of the choice of affine connections.

The following lemma is also due to \cite{MR4735657}. We include the proof for completeness.

\begin{lemma}[\cite{MR4735657}]\label{lem:XY}
The section $s$ constructed above satisfies $s^{-1}(0)\cong X\cap Y$.
\end{lemma}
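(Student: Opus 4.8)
The plan is to unwind the definitions of $M$, $E$, and $s$ and match the zero locus $s^{-1}(0)$ with the intersection $X\cap Y$ directly. Recall that $M$ consists of those tangent vectors $v\in P_W$ whose short geodesic $\gamma_v$ satisfies $\gamma_v(0)\in X$ and $\gamma_v(1)\in Y$, that $E=\pi^{\ast}\tangent{W}\cong M\times_W \tangent{W}$, and that the section is $s(v)=(v,v)$. First I would observe that, under the identification $E\cong M\times_W \tangent{W}$, the zero section $\sigma_0$ sends $v\in M$ to $(v,0_{\pi(v)})$, so that $s(v)=\sigma_0(v)$ forces the second component to satisfy $v=0_{\pi(v)}$, i.e. $v$ is a zero tangent vector. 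Hence
\[
s^{-1}(0)=\{v\in M : v=0_{\pi(v)}\}=\{0_p : p\in W,\ 0_p\in M\}.
\]

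Next I would determine which zero vectors $0_p$ lie in $M$. For the constant tangent vector $0_p$, the associated geodesic is the constant path $\gamma_{0_p}(t)\equiv p$, which is defined on all of $\RR$, so $0_p\in P_W$ automatically. The defining conditions for membership in $M$ then read $\gamma_{0_p}(0)=p\in X$ and $\gamma_{0_p}(1)=p\in Y$, i.e. $p\in X\cap Y$. Therefore $s^{-1}(0)=\{0_p : p\in X\cap Y\}$, which is precisely the image of the zero section of $\tangent{W}$ restricted to $X\cap Y$. The map $p\mapsto 0_p$ is a diffeomorphism from $X\cap Y$ onto this set, establishing the claimed identification $s^{-1}(0)\cong X\cap Y$.

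There is essentially no hard obstacle here: the only point requiring a little care is to check that the bijection $p\mapsto 0_p$ between $X\cap Y$ and $s^{-1}(0)$ is a diffeomorphism of the relevant smooth structures, not merely a set-theoretic bijection. This follows because $X\cap Y \to \tangent{W}$, $p\mapsto 0_p$, is the restriction to $X\cap Y$ of the smooth zero section $W\to \tangent{W}$, which is an embedding; and $s^{-1}(0)$ inherits its structure as a subset of $M\subset \tangent{W}$, so the zero section identifies the two with matching smooth structures. I would therefore close by noting that $s^{-1}(0)$, viewed inside $\tangent{W}$, is exactly the image of $X\cap Y$ under the zero section, completing the proof.
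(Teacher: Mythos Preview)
Your proof is correct and follows essentially the same approach as the paper: identify $s(v)=0$ with $v=0_{\pi(v)}$, observe that the geodesic of a zero vector is constant so that $0_p\in M$ iff $p\in X\cap Y$, and conclude via the zero section. You are slightly more explicit than the paper in addressing the smooth structure on $s^{-1}(0)$ via the embedding $p\mapsto 0_p$, which is a welcome addition.
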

\begin{proof}
By construction, the fibre of $E$ at $v\in M$ is $\tangentp{\pi(v)}{W}$. Thus, we have $s(v)=0$ if and only if $v=0\in \tangentp{\pi(v)}{W}$. Moreover, observe that $v=0\in \tangentp{\pi(v)}{W}$ if and only if the corresponding short geodesic is constant. In other words, $\pi(v)=\gamma_{v}(0)=\gamma_{v}(1) \in X\cap Y$. Therefore $s(v)=0$ implies $\pi(v)\in X\cap Y$ and vice versa.
\end{proof}

It is natural to ask about the geometric meaning of the Atiyah class associated with the derived intersection. The following theorem provides an answer to this question.

\begin{theorem}\label{thm:DerivedIntersection}
Given two embedded submanifolds $X$ and $Y$ of a manifold $W$, the Atiyah class of DG manifold $(\cM_{XY},Q)$ constructed above vanishes if and only if $X$ and $Y$ intersect cleanly.
\end{theorem}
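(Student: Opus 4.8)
The plan is to reduce Theorem~\ref{thm:DerivedIntersection} to Theorem~\ref{thm:Main} by matching the two notions of clean intersection: the clean intersection of $X$ and $Y$ inside $W$, and the clean intersection of the section $s:M\to E$ with the zero section $\sigma_0:M\to E$. By Theorem~\ref{thm:Main}, the Atiyah class of $(\cM_{XY},Q)=(E[-1],\iota_s)$ vanishes if and only if $s$ meets $\sigma_0$ cleanly, so it suffices to prove:
\[
\textit{$X$ and $Y$ intersect cleanly in $W$} \iff \textit{$s$ meets $\sigma_0$ cleanly in $E$.}
\]
By Lemma~\ref{lem:XY} we already know $s^{-1}(0)\cong X\cap Y$, so one of the two defining conditions of clean intersection---that the common zero locus be a submanifold---is literally the same condition on both sides. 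The remaining task is to compare the tangent-space condition, and by Lemma~\ref{lem:Rank} this amounts to showing that, for each $p$ in the intersection,
\[
\dim\bigl(\tangentp{p}{X}\cap \tangentp{p}{Y}\bigr)=\dim\ker(Ds_{\tilde p}),
\]
where $\tilde p\in M$ is the point of $M$ (a constant geodesic) corresponding to $p$, and $Ds_{\tilde p}:\tangentp{\tilde p}{M}\to E_{\tilde p}=\tangentp{p}{W}$ is the map from Eq.~\eqref{eq:Dsp}.

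First I would fix an affine connection $\nabla$ on $W$ and make the identifications concrete near a point $p\in X\cap Y$. The base $M\subset P_W\subset TW$ consists of short geodesics $\gamma$ with $\gamma(0)\in X$, $\gamma(1)\in Y$; the constant geodesic at $p$ is the point $\tilde p=0_p\in T_pW$. Using the geodesic exponential map one gets a local description of $M$ near $\tilde p$: a nearby short geodesic is determined by its initial point $x=\gamma(0)\in X$ and its initial velocity $v\in T_xW$, subject to $\exp_x(v)=\gamma(1)\in Y$. Thus, locally,
\[
M\cong\{(x,v)\in TW|_X : \exp_x(v)\in Y\},
\]
and under this description $\pi:M\to W$ is $(x,v)\mapsto x$ and the section is $s(x,v)=v\in T_xW=E_{(x,v)}$. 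The fibre $E_{\tilde p}$ is $T_pW$, and $Ds_{\tilde p}$ sends a tangent vector of $M$ at $\tilde p=(p,0)$ to the ``$v$-component'' of its image under the splitting~\eqref{eq:TE}; concretely, writing a curve in $M$ through $\tilde p$ as $(x(t),v(t))$ with $x(0)=p$, $v(0)=0$, we have $Ds_{\tilde p}(\dot x(0),\dot v(0))=\dot v(0)$.

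Next I would compute the tangent space $T_{\tilde p}M$ explicitly. Differentiating the constraint $\exp_{x(t)}(v(t))\in Y$ at $t=0$, and using that at $v=0$ the exponential map satisfies $d\exp_p|_0=\mathrm{id}$ on the $v$-direction and acts as the identity (via the base point) on the $x$-direction, one finds that the constraint linearises to $\dot x(0)+\dot v(0)\in T_pY$. Combined with the unconstrained conditions $\dot x(0)\in T_pX$ and $\dot v(0)\in T_pW$ arbitrary, this gives
\[
T_{\tilde p}M\cong\{(a,b)\in T_pX\times T_pW : a+b\in T_pY\}.
\]
Then $\ker(Ds_{\tilde p})=\{(a,0): a\in T_pX,\ a\in T_pY\}\cong T_pX\cap T_pY$, so indeed $\dim\ker(Ds_{\tilde p})=\dim(T_pX\cap T_pY)=\dim(\tangentp{p}{X}\cap\tangentp{p}{Y})$, which is exactly the tangent-space comparison needed. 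Feeding this, together with Lemma~\ref{lem:XY}, into Lemma~\ref{lem:Rank} shows that $s$ meets $\sigma_0$ cleanly if and only if $X\cap Y$ is a submanifold with $T_p(X\cap Y)=T_pX\cap T_pY$ for all $p$, i.e.\ if and only if $X$ and $Y$ intersect cleanly. Applying Theorem~\ref{thm:Main} completes the proof; and since by \cite[Theorem~3.4]{MR4735657} the DG manifold $(\cM_{XY},Q)$ is independent of $\nabla$ up to isomorphism, and the Atiyah class is an isomorphism invariant, the conclusion does not depend on the choice of connection.

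I expect the main obstacle to be the explicit local model of $M$ near a constant geodesic and the careful linearisation of the constraint $\exp_x(v)\in Y$---in particular verifying that the only term that survives at $v=0$ is the naive one $\dot x(0)+\dot v(0)\in T_pY$, with no curvature or connection corrections (which is plausible since the first-order behaviour of $\exp$ at the zero vector is connection-independent). A secondary technical point is checking that $M$ is a manifold near $\tilde p$ in the first place, i.e.\ that the constraint map is a submersion onto a suitable target, or else handling the case where it is not---but this is subsumed in the hypothesis/conclusion ``$s^{-1}(0)$ is a manifold'' of Lemma~\ref{lem:Rank}, so it need not be argued separately. Everything else is bookkeeping with the splitting~\eqref{eq:TE} and the definitions.
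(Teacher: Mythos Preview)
Your proposal is correct and follows the same overall reduction as the paper: invoke Theorem~\ref{thm:Main}, translate via Lemma~\ref{lem:Rank}, and use Lemma~\ref{lem:XY} to identify $s^{-1}(0)$ with $X\cap Y$. The difference lies in how you compare $\ker Ds_{\tilde p}$ with $\tangentp{p}{X}\cap\tangentp{p}{Y}$. The paper isolates this as a separate Lemma~\ref{lem:dim} and proves only the \emph{inequality} $\dim(\tangentp{p}{X}\cap\tangentp{p}{Y})\le\dim\ker Ds_v$, by building an auxiliary local model $M_U$ with the trivial connection on a chart, embedding it into $M$, and computing there; the conclusion is then squeezed out using the general inclusion $\tangentp{p}{(X\cap Y)}\subset\tangentp{p}{X}\cap\tangentp{p}{Y}$. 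You instead linearise the defining constraint $\exp_x(v)\in Y$ of $M$ itself at the constant geodesic, using only $d\exp_p|_0=\id$, and obtain the exact identification $\ker Ds_{\tilde p}\cong\tangentp{p}{X}\cap\tangentp{p}{Y}$ in one stroke. This is the same computation that appears inside the paper's proof of Lemma~\ref{lem:dim} (the formula $D(s_U)\circ TF(w_X,w_Y)=\hat w_Y-\hat w_X$), but applied directly to $M$ rather than to an intermediate submodel; your observation that the first-order behaviour of $\exp$ at the zero section is connection-independent is exactly what makes the detour through $M_U$ unnecessary. The payoff is that both implications become immediate, whereas in the paper the direction ``$X,Y$ clean $\Rightarrow$ $s,\sigma_0$ clean'' is left somewhat implicit. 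One small point worth making explicit in your write-up: your linearisation already shows that the map $(x,v)\mapsto\exp_x(v)$ from $TW|_X$ to $W$ is a submersion at $0_p$ (its differential $(a,b)\mapsto a+b$ is surjective), so $M$ is automatically a manifold near $\tilde p$ and the tangent-space formula you wrote is justified; you flagged this as a secondary issue but it in fact falls out of what you have already computed.
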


To prove this theorem, we need the following lemma. Recall that for $v\in s^{-1}(0)$, the map $Ds_{v}$ is defined in Eq.~\eqref{eq:Dsp}.

\begin{lemma}\label{lem:dim}
Let $E\to M$ and $s\in \sections{E}$ be as above.
  For $v\in s^{-1}(0)$, denote $p=\pi(v) \in X\cap Y$. Then,
  \[\dim(\tangentp{p}{X}\cap \tangentp{p}{Y}) \leq \dim \ker Ds_{v}.\]
\end{lemma}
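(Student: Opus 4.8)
The plan is to build, near the zero vector $0_{p}$ lying over a point $p\in X\cap Y$, an explicit linear model for $\tangentp{0_{p}}{M}$ together with the linearization $Ds_{0_{p}}$, and then simply read off the kernel. Write $\ev_{0}:=\pi$ and $\ev_{1}\colon M\to W$, $v\mapsto\gamma_{v}(1)=\exp_{\pi(v)}(v)$; by the very definition of $M$ one has $M=\Phi^{-1}(X\times Y)$ for the map $\Phi:=(\ev_{0},\ev_{1})\colon P_{W}\to W\times W$. The geometric input is the standard fact that $\Phi$ restricts, near the zero section, to a diffeomorphism onto a neighbourhood of the diagonal: it sends $0_{q}$ to $(q,q)$, and with respect to the canonical splitting $\tangentp{0_{p}}{(\tangent W)}\cong\tangentp{p}{W}\oplus\tangentp{p}{W}$ (tangent to the zero section $\oplus$ fibre) one computes, using $d(\exp_{p})_{0}=\id$, that $d\Phi_{0_{p}}(u,\xi)=(u,\,u+\xi)$. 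In particular $d\Phi_{0_{p}}$ is an isomorphism, so $\Phi$ is transverse to $X\times Y$ along the zero section and $d(\ev_{0},\ev_{1})_{0_{p}}$ restricts to an isomorphism $\tangentp{0_{p}}{M}\xto{\sim}\tangentp{p}{X}\oplus\tangentp{p}{Y}$.

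Under this identification I would describe $Ds_{0_{p}}$ as a difference map. Since $E=\pi^{\ast}\tangent W$ we have $E_{0_{p}}=\tangentp{p}{W}$, and $s(v)=v\in\tangentp{\pi(v)}{W}$; as $s(0_{p})=0$, the linearization $Ds_{0_{p}}\colon\tangentp{0_{p}}{M}\to\tangentp{p}{W}$ is just the derivative of the fibre component of $s$. Working in a chart $(y,w)$ on $\tangent W$ near $0_{p}$ with $\pi(y,w)=y$—so that, in the induced trivialization of $E$, the section $s$ is simply $v\mapsto w(v)$—a one-line Taylor expansion $\exp_{y}(w)=y+w+O(\abs{w}^{2})$ shows that $d(\ev_{1})_{0_{p}}(\delta v)-d(\ev_{0})_{0_{p}}(\delta v)$ equals exactly that fibre derivative; that is, $Ds_{0_{p}}=d(\ev_{1})_{0_{p}}-d(\ev_{0})_{0_{p}}$. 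Transporting this through the isomorphism above, $Ds_{0_{p}}$ becomes the difference map $\tangentp{p}{X}\oplus\tangentp{p}{Y}\to\tangentp{p}{W}$, $(a,b)\mapsto b-a$.

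It then follows that $\ker Ds_{0_{p}}\cong\{(a,b)\in\tangentp{p}{X}\oplus\tangentp{p}{Y}:a=b\}\cong\tangentp{p}{X}\cap\tangentp{p}{Y}$, which is the claimed inequality (in fact an equality). Should one wish to avoid the full local model, the inequality alone can be obtained by exhibiting kernel vectors directly: for $w\in\tangentp{p}{X}\cap\tangentp{p}{Y}$ choose curves $c$ in $X$ and $c'$ in $Y$ through $p$ with $\dot c(0)=\dot c'(0)=w$, let $v_{t}$ be the unique short geodesic from $c(t)$ to $c'(t)$ (well defined and smooth in $t$ for $\abs{t}$ small by the inverse function theorem applied to $\exp$), and check that $w\mapsto\frac{d}{dt}\big|_{0}v_{t}$ is a linear injection $\tangentp{p}{X}\cap\tangentp{p}{Y}\into\ker Ds_{0_{p}}\subset\tangentp{0_{p}}{M}$. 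I expect the only delicate point to be organizational—keeping straight the canonical splitting of $\tangentp{0_{p}}{(\tangent W)}$, the normalization $d(\exp_{p})_{0}=\id$, and the intrinsic description of the linearization of a section at a zero—rather than anything conceptually hard; once $Ds_{0_{p}}$ is pinned down as the difference map, the lemma is immediate. Note also that $s^{-1}(0)$ lies in the zero section of $\tangent W$, so the whole argument takes place where $\Phi$ is a local diffeomorphism, and no regularity of $M$ beyond what is already assumed is used.
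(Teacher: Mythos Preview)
Your argument is correct and in fact establishes the equality $\dim\ker Ds_{v}=\dim(\tangentp{p}{X}\cap\tangentp{p}{Y})$, stronger than what is stated. The paper follows the same core idea---identify (a piece of) $\tangentp{v}{M}$ with $\tangentp{p}{X}\oplus\tangentp{p}{Y}$ and recognise the linearisation of $s$ as the difference map $(a,b)\mapsto b-a$---but executes it differently: it passes to a coordinate chart $\phi:U\to\RR^{d}$, replaces $\nabla$ by the flat connection there (extended to $W$ by partition of unity), builds the explicit local model $M_{U}\cong U_{X}\times U_{Y}$ of straight-line geodesics, and then uses the embedding $M_{U}\hookrightarrow M$ to transfer the kernel estimate, obtaining only the inequality $\dim\ker D(s_{U})_{\widetilde{v}}\leq\dim\ker Ds_{v}$. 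Your route is more direct: since $d(\exp_{p})_{0}=\id$ holds for an arbitrary affine connection, the map $\Phi=(\ev_{0},\ev_{1})$ is already a local diffeomorphism near $0_{p}$ without any change of connection, and the identification $\tangentp{0_{p}}{M}\cong\tangentp{p}{X}\oplus\tangentp{p}{Y}$ together with $Ds_{0_{p}}=d(\ev_{1})_{0_{p}}-d(\ev_{0})_{0_{p}}$ drops out immediately. This buys you both brevity and the sharper conclusion; the paper's version has the compensating advantage that every step is explicit in linear coordinates, in keeping with the style of the surrounding computations. Your alternative ``curve'' argument at the end is essentially the paper's injection $w\mapsto \tangent{F}_{(p,p)}(w,w)$ phrased intrinsically.
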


\begin{proof}
  Let $\dim W=d$, $\dim X=k$ and $\dim Y=l$.

Let $\phi:U\to \RR^{d}$ be a local coordinate chart near $p$ such that $\phi(p)=0$. 
We may choose $U\subset W$ in such a way that $\phi(U)$ is convex, by shrinking $U$ if necessary. Throughout this proof, we identify $\tangent{(\phi(U))}\cong \phi(U)\times \RR^{d}$.
Also, we write $U_{X}:=X\cap U$ and $U_{Y}:=Y\cap U$. 

Note that, using partition of unity, any affine connection on $U$ extends to an affine connection on $W$, by shrinking $U$ if necessary. 

Let $\nabla$ be the trivial affine connection on $\phi(U)\subset\RR^{d}$. Define a manifold $M_{U}$ by
\[M_{U}=\{(\hat{x},\hat{y}-\hat{x})\in \phi(U)\times \RR^{d} :  \forall \hat{x}\in \phi(U_{X}), \hat{y}\in \phi(U_{Y})\}.\]
Observe that, since $\phi(U)$ is convex, the path 
\[\hat{x}+ t(\hat{y}-\hat{x}): [0,1] \to \phi(U)\]
is a well-defined geodesic on $\phi(U)$.
Thus, the manifold $M_{U}$ serves as a local model of $M$, defined for embedded submanifolds $U_{X}$ and $U_{Y}$ of $U$, and it is clear that $M_{U}$ is a submanifold of $M$, via the tangent map $\tangent{\phi}$. 

We define 
\[E_{U}:=M_{U}\times_{\phi(U)}\tangent{(\phi(U))} \cong M_{U}\times \RR^{d},\]
and define $s_{U}:M_{U}\to E_{U}$ by
\begin{equation}\label{eq:sU}
  s_{U}(\hat{x},\hat{y}-\hat{x})=(\hat{x},\hat{y}-\hat{x}; \hat{y}-\hat{x})\in M_{U}\times \RR^{d}.
\end{equation}

We constructed $M_{U}$, $E_{U}$ and $s_{U}$ in such a way that the following diagram
\[
\begin{tikzcd}
  M \arrow{r}{s} & E \\
  M_{U}\arrow[hook]{u} \arrow{r}{s_{U}} & E_{U} \arrow[hook]{u}
\end{tikzcd}
\]
is commutative. Since $p\in X\cap Y$, the element $\widetilde{v}:=(\phi(p),0) \in M_{U}$ corresponds to the original element $v\in M$, and thus, we obtain an induced commutative diagram:
\[
\begin{tikzcd}
  \tangentp{v}{M} \arrow{r}{Ds_{v}} & E_{v} \arrow{r}{\sim} & \tangentp{p}{U} \arrow{d}{\tangent{\phi}_{p}}  \\
  \tangentp{\widetilde{v}}{M_{U}}\arrow[hook]{u} \arrow{r}{D(s_{U})_{\widetilde{v}}} & (E_{U})_{\widetilde{v}}  \arrow[swap, sloped]{u}{\sim}  \arrow{r}{\sim} & \tangentp{\phi(p)}{(\phi(U))} .
\end{tikzcd}
\]
Thus, we have 
\begin{equation}\label{ineq:dim1}
\dim \ker D(s_{U})_{\widetilde{v}} \leq \dim \ker Ds_{v}.
\end{equation}

Observe that, by convexity of $\phi(U)$, we have a diffeomorphism 
\[F:U_{X}\times U_{Y} \to M_{U}, \quad (x,y)\mapsto (\phi(x), \phi(y)-\phi(x)),\]
and it satisfies $F(p,p)=\widetilde{v}$. 

By Eq.~\eqref{eq:sU}, it is straightforward to verify that, 
for $(w_{X},w_{Y})\in \tangentp{p}{\big(U_{X}\times U_{Y}\big)}\cong \tangentp{p}{X}\times \tangentp{p}{Y}$, we have
\[D(s_{U})_{\widetilde{v}} \circ \tangent{F}_{(p,p)}(w_{X}, w_{Y})=(\hat{w}_{Y}-\hat{w}_{X}) \in \RR^{d},\]
where $\hat{w}_{X}=\tangent{\phi}_{p}(w_{X}) \in \tangentp{\phi(p)}{(\phi(U))}\cong \RR^{d}$, and similarly for $\hat{w}_{Y}$.

Finally, if $w\in \tangentp{p}{X}\cap \tangentp{p}{Y}$, then $\tangent{F}_{(p,p)}(w,w)\in \ker D(s_{U})_{\widetilde{v}}$. Since $\tangent{F}_{(p,p)}$ is an isomorphism, we have
\[\dim (\tangentp{p}{X}\cap \tangentp{p}{Y}) \leq \dim \ker D(s_{U})_{\widetilde{v}}.\] Combining it with inequality~\eqref{ineq:dim1} concludes the proof.
\end{proof}

We now prove Theorem~\ref{thm:DerivedIntersection}.
\begin{proof}[Proof of Theorem~\ref{thm:DerivedIntersection}]
By Theorem~\ref{thm:Main}, the Atiyah class of $(\cM_{XY},Q)=(E[-1],\iota_{s})$ vanishes if and only if the intersection of $s$ with the zero section is clean. Then, by Lemma~\ref{lem:Rank}, it is equivalent to
\begin{enumerate}
  \item the zero locus $s^{-1}(0)$ is a manifold;
  \item for every $v\in s^{-1}(0)$, it satisfies
    \begin{equation}\label{eq:dim3}
      \dim \tangentp{v}{\big(s^{-1}(0)\big)} + \rank Ds_{v} = \dim \tangentp{v}{M} .
    \end{equation} 
\end{enumerate}

By Lemma~\ref{lem:XY}, the first item is equivalent to $X\cap Y$ is a manifold. 
Next, together with the rank theorem, and Lemma~\ref{lem:dim}, the second item implies that
\[ \dim \tangentp{p}{(X\cap Y)}=\dim \tangentp{v}{\big(s^{-1}(0)\big)} = \dim \ker Ds_{v} \geq \dim (\tangentp{p}{X}\cap \tangentp{p}{Y}).\]
Note that, in general, we have $\tangentp{p}{(X\cap Y)}\subset\tangentp{p}{X}\cap \tangentp{p}{Y}$. Thus, the second item is equivalent to the condition
\[\tangentp{p}{(X\cap Y)} = \tangentp{p}{X}\cap \tangentp{p}{Y}.\]
This proves that the vanishing of the Atiyah class of the DG manifold $(E[-1],\iota_{s})$, arising from the embedded submanifolds $X$ and $Y$ in a manifold $W$, is equivalent to the clean intersection of $X$ and $Y$.
\end{proof}

\section*{Acknowledgement}
The author would like to thank 
Zhuo Chen, Dongwook Choa, Dongnam Ko, Camille Laurant-Gengoux, Hsuan-Yi Liao, Mathieu Stiénon, Maosong Xiang and Ping Xu
for their interest in this work and their helpful comments. Also, the author is grateful to
National Center for Theoretical Sciences, Institut Henri Poincaré and Pennsylvania State University for their generous support and hospitality.

\printbibliography
\end{document}